\renewcommand{\L}{\mathsf{L}}
\newcommand{\W}{\mathsf{W}}
\renewcommand{\c}{\mathsf{c}}
\newcommand{\Lip}{\mathsf{Lip}}
\newcommand{\UCont}{\mathsf{UCont}}
\newcommand{\Bor}{\mathsf{Bor}}
\newcommand{\C}{\mathcal{C}}
\newcommand{\F}{\mathcal{F}}
\newcommand{\RR}{\mathbb{R}}
\newcommand{\ZZ}{\mathbb{Z}}
\newcommand{\pre}[2]{{}^{#1} #2}
\newcommand{\D}{\mathsf{D}}
\newcommand{\id}{\operatorname{id}}
\newcommand{\pow}{\mathscr{P}}
\newcommand{\p}{\operatorname{p}}
\newcommand{\leng}{\operatorname{lh}}
\newcommand{\rg}{\operatorname{rg}}
\newcommand{\Deg}{{\sf Deg}}
\newcommand{\AD}{{\sf AD}}
\newcommand{\ZF}{{\sf ZF}}
\newcommand{\AC}{{\sf AC}}
\newcommand{\DC}{{\sf DC}}
\newcommand{\SLO}{{\sf SLO}}
\newcommand{\Nbhd}{\boldsymbol{\mathrm{N}}}
\theoremstyle{theorem}
\newtheorem{theorem}{Theorem}[section]
\newtheorem{lemma}[theorem]{Lemma}
\newtheorem{claim}{Claim}[theorem]
\newtheorem{main lemma}[theorem]{Main Lemma}
\newtheorem{corollary}[theorem]{Corollary}
\newtheorem{proposition}[theorem]{Proposition}
\newtheorem{question}[theorem]{Question}
\theoremstyle{definition}
\newtheorem{definition}[theorem]{Definition}
\theoremstyle{remark}
\newtheorem{remark}[theorem]{Remark}
\newtheorem{notation}[theorem]{Notation}
\newtheorem{example}[theorem]{Example}
\begin{document}

\title[Lipschitz and uniformly continuous reducibilities]{Lipschitz and uniformly continuous reducibilities on ultrametric Polish spaces} 
\date{\today} 
\author{Luca Motto Ros}
\address{Albert-Ludwigs-Universit\"at Freiburg \\
Mathematisches Institut -- Abteilung f\"ur Mathematische Logik\\
Eckerstra{\ss}e, 1 \\
79104 Freiburg im Breisgau\\
Germany}
\email{luca.motto.ros@math.uni-freiburg.de}
\author{Philipp Schlicht} 
\address{Rheinische Friedrich-Wilhelms-Universit\"at Bonn \\
Mathematisches Institut \\
Endenicher Allee, 60 \\ 
53115 Bonn \\ 
Germany}
\email{schlicht@math.uni-bonn.de}
\thanks{The authors would like to congratulate Professor Victor Selivanov on the occasion of
his sixtieth birthday for his wide and important contributions to mathematical
logic and, in particular, to the theory of Wadge-like reducibilities and its connections with theoretical computer science.}
\keywords{Wadge reducibility, continuous reducibility, Lipschitz reducibility, uniformly continuous reducibility, ultrametric Polish space, 
nonexpansive function, Lipschitz function, uniformly continuous function}
\subjclass[2010]{03E15, 03E60, 54C10, 54E40}

\begin{abstract} 
We analyze the reducibilities induced by, respectively, uniformly continuous, Lipschitz, and nonexpansive functions on arbitrary ultrametric 
Polish spaces, and determine whether under suitable set-theoretical assumptions the induced degree-structures are well-behaved.
\end{abstract} 

\maketitle






\section{Introduction}
Throughout the paper, we work in the usual Zermelo-Fr\ae{}nkel set theory \( \ZF \), plus the Axiom of Dependent Choices over the reals \( \DC(\RR) \). 
Let \( X \) be a Polish space, and let \( \F \) be a \emph{reducibility (on \( X \))}, that is a collection of functions from \( X \) to itself closed under composition and containing the identity  \( \id = \id_X \). Given \( A,B \subseteq X \), we say that \( A \) is \emph{reducible} to \( B \)  if and only if
\[ 
A = f^{-1}(B) \text{ for some } f \colon X \to X,
 \] 
and that \( A \) is \emph{\( \F \)-reducible} to \( B \) (\( A \leq_\F B \) in symbols) if \( A \) is reducible to \( B \) \emph{via a function in \( \F \)}.
Notice that clearly \( A \leq_\F B \iff \neg A \leq_\F \neg B \) (where, to simplify the notation, we set \( \neg A = X \setminus A \) whenever the 
underlying space \( X \) is clear from the context).
Since \( \F \) is a reducibility on \( X \), the relation \( \leq_\F \) is a preorder which can be used to measure the ``complexity'' of subsets 
of \( X \): in fact, if \( \F \) consists of reasonably simple functions, the assertion ``\( A \leq_\F B  \)'' may be understood as ``the set \( A \) is not more
 complicated than the set \( B \)'' --- to test whether a given \( x \in X \) belongs to \( A \) or not, it is enough to pick a witness \( f \in \F \) of 
\( A \leq_\F B \), and then check whether \( f(x) \in B \) or not. This suggests that the reducibility \( \F \) may be used to form a hierarchy of subsets of \( X \) 
 in the following way. Say that  \( A , B \subseteq X \) are \emph{\( \F \)-equivalent}
 (\( A \equiv_\F B \) in symbols) if \( A \leq_\F B \leq_\F A \). Since \( \equiv_\F \) is the equivalence relation canonically induced by \( \leq_\F \), we can
 consider the \emph{\( \F \)-degree} \( [A]_\F = \{ B \subseteq X \mid A \equiv_\F B \} \) of a given \( A \subseteq X \), and then order the 
collection \( \Deg(\F) = \{ [A]_\F \mid A \subseteq X \} \) of such \( \F \)-degrees using
 the quotient of \( \leq_\F \), namely setting \( [A]_\F \leq [B]_\F \iff A \leq_\F B \) for every \( A, B \subseteq X \). The resulting structure \( \Deg(\F) = (\Deg(\F),\leq) \) is then called \emph{\( \F \)-hierarchy on \( X \)}. When considering the restriction 
\( \Deg_{\boldsymbol{\Gamma}}(\F) \) of such structure to the \( \F \)-degrees of sets in a given \( \boldsymbol{\Gamma} \subseteq \pow(X) \), we 
speak of \emph{\( \F \)-hierarchy on \( \boldsymbol{\Gamma} \)-subsets of \( X \)}.

In his Ph.D. thesis~\cite{Wadge:1983}, Wadge considered the case when \( X \) is the Baire space \( \pre{\omega}{\omega} \) (i.e.\ the space 
of all \(\omega\)-sequences of natural numbers endowed with the product of the discrete topology on \(\omega\)) and \( \F \) is either the 
set \( \mathsf{W} = \W(X) \) of all continuous functions, or the set \( \L(\bar{d}) \) of all  functions which are nonexpansive with respect to the usual metric 
\( \bar{d} \) on \( \pre{\omega}{\omega} \) (see Section~\ref{sec:ultrametric} for the definition). Using game-theoretical methods, he was able to show 
that in both cases the \( \F \)-hierarchy on Borel subsets of \( X = \pre{\omega}{\omega} \) is \emph{semi-well-ordered}, that is:
\begin{enumerate}[(1)]
\item
it is \emph{semi-linearly ordered}, i.e.\ either \( A \leq_\F B \) or \( \neg B \leq_\F A \) for all Borel \( A,B \subseteq X \);
\item
it is \emph{well-founded}.
\end{enumerate}

Notice that the \emph{Semi-Linear Ordering principle for \( \F \)} (briefly: \( \SLO^\F \)) defined in (1) implies that antichains have size at 
most \( 2 \), and that they are of the form \( \{ [A]_\F, [\neg A]_\F \} \) for some \( A \subseteq X \) such that \( A \nleq_\F \neg A \) (sets with 
this last property are called \emph{\( \F \)-nonselfdual}, while the other ones are called \emph{\( \F \)-selfdual}: since \( \F \)-selfduality is 
\( \equiv_\F \)-invariant, a similar terminology will be applied to the \( \F \)-degree of \( A \) as well). This in particular means that if we further 
identify each \( \F \)-degree \( [A]_\F \) with its \emph{dual} \( [\neg A]_\F \) we get a linear ordering, which is also well-founded when (2) holds.

A semi-well-ordered hierarchy is practically optimal as a measure of complexity for (Borel subsets of) \( X \): by well-foundness, we can associate 
to each \( A \subseteq X \) an ordinal rank (the \emph{\( \F \)-rank of \( A \)}), and antichains are of minimal size.%
\footnote{Asking for no antichain at all seems unreasonable by the following considerations: let \( A \) be e.g.\ a proper open subset of a given 
Polish space \( X \). On the one hand, checking membership in \( A \) cannot be considered strictly simpler or strictly more difficult than checking
 membership in its complement: this means that the degrees of \( A \) and \( \neg A \) cannot be one strictly  below the other in the hierarchy. On the other
 hand, the fact that open sets and closed sets have in general different (often complementary) combinatorial and topological properties, strongly
 suggests that the degrees of \( A \) and \( \neg A \) should be kept distinct. Therefore such degrees must form an antichain of size \( 2 \).}
In fact, in~\cite{MottoRos:2012b, MottoRos:2012c} it is proposed to classify arbitrary \( \F \)-hierarchies on corresponding topological spaces \( X \)
 according to whether they provide a good measure of complexity for subsets of \( X \). This led to the following definition.

\begin{definition}
Let \( \F \) be a reducibility on a (topological) space \( X \), and let \( \boldsymbol{\Gamma} \subseteq \pow(X) \). The \( \F \)-hierarchy \( \Deg_{\boldsymbol{\Gamma}}(\F) \) on \( \boldsymbol{\Gamma} \)-subsets of \( X \) is called:
\begin{enumerate}[\( \bullet \)]
\item
\emph{very good} if it is semi-well-ordered;
\item
\emph{good} if it is a well-quasi-order, i.e.\ all its antichains and descending chains are finite;
\item
\emph{bad} if it contains infinite antichains;
\item
\emph{very bad} if it contains both infinite antichains and infinite descending chains.
\end{enumerate}
\end{definition}

Since the pioneering work of Wadge, many other \( \F \)-hierarchies on the Baire space \( \pre{\omega}{\omega} \) (or, more generally, on
 \emph{zero-dimensional} Polish space) have been considered in the
 literature~\cite{VanWesep:1978,Andretta:2003d,Andretta:2006,MottoRos:2009,MottoRos:2010,MottoRos:2010b}, including Borel functions, 
\( \boldsymbol{\Delta}^0_\alpha \)-functions,%
\footnote{Given a countable ordinal \( \alpha \geq 1 \) and a Polish space \( X \), a function \( f \colon X \to X \) is called 
\emph{\( \boldsymbol{\Delta}^0_\alpha \)-function} if \( f^{-1}(A) \in \boldsymbol\Sigma^0_\alpha \) for every \( A \in \boldsymbol{\Sigma}^0_\alpha \).}
 Lipschitz functions, uniformly continuous functions, functions of Baire class \( < \alpha \) for a given additively closed countable ordinal \(\alpha\), 
\( \boldsymbol{\Sigma}^1_n \)-measurable functions, and so on. It turned out that all of them are very good when restricted to Borel sets, or even to larger collections of subsets of \( \pre{\omega}{\omega} \) 
if suitable determinacy principles are assumed. 
In contrast, it is shown in~\cite{Hertling:1993,Hertling:1996,Ikegami:2012,Schlicht:2012,MottoRos:2012b} that when considering the continuous
 reducibility  on the real line \( \RR \) or, more generally, on arbitrary Polish spaces with \emph{nonzero dimension}, then one usually gets a (very) 
bad hierarchy (and the same applies to some other classical kind of reducibilities, depending on the space under consideration).%
\footnote{Of course, one can further extend the class of topological spaces under consideration, and analyze e.g.\ the continuous reducibility on them: for example,~\cite{Selivanov:2005} considers the case of \( \omega \)-algebraic domains (a class of spaces relevant in theoretical computer 
science), while~\cite{MottoRos:2012b} consider the broader class of the so-called quasi-Polish spaces. Moreover, it is possible to generalize 
the notion of reducibility itself by considering e.g.\ reducibilities between finite partitions (see
 e.g.~\cite{vanEngelen:1987,Hertling:1993,Selivanov:2005,Selivanov:2007,Selivanov:2010} and the references contained therein).}

Given all these results, one may be tempted to conjecture that all ``natural'' \( \F \)-hierarchies on (Borel subsets of) a  zero-dimensional Polish 
space \( X \) need to be very good. This conjecture is justified by the fact that every such space is homeomorphic to a closed subset (hence to a
 topological retract) of the Baire space, and a well-known transfer argument (see e.g.\ \cite[Proposition 5.4]{MottoRos:2012b}) shows that this 
already implies the following folklore result.

\begin{proposition} \label{prop:continuous}
Let \( X \) be a zero-dimensional Polish space, and let \( \F \) be an arbitrary reducibility on \( X \) which contains \( \W(X) \), i.e.\ all continuous functions from 
\( X \) to itself. Then the \( \F \)-hierarchy \( \Deg_{\boldsymbol{\Delta}^1_1}(\F) \) on Borel subsets of \( X \) is very good.
\end{proposition}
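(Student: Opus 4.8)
The plan is to deduce this from the classical theorem of Wadge, recalled in the Introduction, that the hierarchy \( \Deg_{\boldsymbol{\Delta}^1_1}(\W(\pre{\omega}{\omega})) \) on the Baire space is semi-well-ordered, in two steps: first transfer that fact to the reducibility \( \W(X) \) using that \( X \) is a retract of \( \pre{\omega}{\omega} \), and then absorb the possibly larger reducibility \( \F \) by a purely order-theoretic argument. To set things up, fix a homeomorphic embedding \( e \colon X \to \pre{\omega}{\omega} \) with closed image \( C = e(X) \) together with a continuous retraction \( r \colon \pre{\omega}{\omega} \to C \) (every nonempty closed subset of \( \pre{\omega}{\omega} \) is a retract of \( \pre{\omega}{\omega} \)), and put \( \pi = e^{-1} \circ r \colon \pre{\omega}{\omega} \to X \). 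Then \( \pi \) is continuous and \( \pi \circ e = \id_X \), so that \( e^{-1}(\pi^{-1}(A)) = A \) and \( \pi^{-1}(\neg A) = \neg \pi^{-1}(A) \) for every \( A \subseteq X \), and \( \pi^{-1}(A) \) is Borel whenever \( A \) is.

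\emph{Step 1: \( \Deg_{\boldsymbol{\Delta}^1_1}(\W(X)) \) is very good.} For Borel \( A,B \subseteq X \) the point is that \( A \leq_{\W(X)} B \) if and only if \( \pi^{-1}(A) \leq_{\W(\pre{\omega}{\omega})} \pi^{-1}(B) \): for the forward direction, if \( h \colon X \to X \) is continuous and \( A = h^{-1}(B) \), then \( g = e \circ h \circ \pi \) is continuous and \( g^{-1}(\pi^{-1}(B)) = \pi^{-1}(h^{-1}(B)) = \pi^{-1}(A) \); conversely, if \( g \colon \pre{\omega}{\omega} \to \pre{\omega}{\omega} \) is continuous and \( \pi^{-1}(A) = g^{-1}(\pi^{-1}(B)) \), then \( h = \pi \circ g \circ e \) is continuous and \( h^{-1}(B) = e^{-1}(g^{-1}(\pi^{-1}(B))) = e^{-1}(\pi^{-1}(A)) = A \). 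Hence \( [A]_{\W(X)} \mapsto [\pi^{-1}(A)]_{\W(\pre{\omega}{\omega})} \) is a well-defined injection \( \Deg_{\boldsymbol{\Delta}^1_1}(\W(X)) \to \Deg_{\boldsymbol{\Delta}^1_1}(\W(\pre{\omega}{\omega})) \) that preserves and reflects \( \leq \) and commutes with complementation. Since well-foundedness passes to substructures, and \( \SLO \) pulls back along a \( \leq \)-reflecting, complementation-preserving embedding, it follows from Wadge's theorem that \( \Deg_{\boldsymbol{\Delta}^1_1}(\W(X)) \) is semi-well-ordered.

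\emph{Step 2: from \( \W(X) \) to \( \F \).} I will use the following general fact. Let \( (P,\leq) \) be a semi-well-ordered partial order equipped with an involutive order-automorphism \( \neg \) (so that \( \SLO \) holds in the form: \( a \leq b \) or \( \neg b \leq a \)), and let \( \preceq \) be any preorder on \( P \) with \( \leq\ \subseteq\ \preceq \) for which \( \neg \) is still an automorphism (i.e.\ \( a \preceq b \iff \neg a \preceq \neg b \)); then the quotient partial order \( P/{\equiv_\preceq} \) is again semi-well-ordered. Indeed, \( \SLO \) for \( \preceq \) is immediate from \( \SLO \) for \( \leq \). For well-foundedness, suppose towards a contradiction that \( a_0 \succ a_1 \succ \dots \) is a strictly \( \preceq \)-descending chain, and call an index \( m \) a \emph{flip} if \( a_{m+1} \not\leq a_m \). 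Two applications of \( \SLO \) for \( \leq \) (to the pairs \( (a_{m+1},a_m) \) and \( (a_{m+1},\neg a_m) \)), together with \( a_m \not\leq a_{m+1} \) and antisymmetry, show that at a flip one has \( a_{m+1} = \neg a_m \); since then \( a_{m+2} = \neg a_{m+1} \) would force \( a_{m+2} = a_m \), contradicting \( a_{m+2} \prec a_m \), no two flips are consecutive, while at every non-flip \( m \) one has \( a_{m+1} < a_m \) strictly in \( \leq \). If there are only finitely many flips, some tail of the chain is strictly \( \leq \)-descending. If the flips are \( i_1 < i_2 < \dots \), put \( b_k = a_{i_k} \); since flips are non-adjacent the chain is strictly \( \leq \)-descending between \( i_k \) and \( i_{k+1} \), giving \( b_{k+1} < a_{i_k+1} = \neg a_{i_k} = \neg b_k \), hence \( \neg b_{k+1} < b_k \) and thus \( b_{k+2} < \neg b_{k+1} < b_k \), so \( b_1 > b_3 > b_5 > \dots \) is strictly \( \leq \)-descending. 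Either way we contradict the well-foundedness of \( \leq \).

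Now apply this with \( (P,\leq) = \Deg_{\boldsymbol{\Delta}^1_1}(\W(X)) \) (semi-well-ordered by Step 1), \( \neg \) the complementation map, and \( \preceq \) defined by \( [A]_{\W(X)} \preceq [B]_{\W(X)} \iff A \leq_\F B \): this is well defined because \( \W(X) \subseteq \F \) makes \( \leq_\F \) invariant under \( \equiv_{\W(X)} \), it extends \( \leq \) for the same reason, and it commutes with complementation since \( A \leq_\F B \iff \neg A \leq_\F \neg B \). As \( [A]_{\W(X)} \equiv_\preceq [B]_{\W(X)} \) exactly when \( A \equiv_\F B \), the quotient \( P/{\equiv_\preceq} \) is precisely \( \Deg_{\boldsymbol{\Delta}^1_1}(\F) \), which is therefore semi-well-ordered, i.e.\ very good. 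The retraction transfer of Step 1 (which is in the spirit of \cite[Proposition~5.4]{MottoRos:2012b}) and the appeal to Wadge's theorem are routine; the only point that I expect to require genuine care is the well-foundedness argument of Step 2, i.e.\ the bookkeeping of ``flips'' along a hypothetical \( \preceq \)-descending chain.
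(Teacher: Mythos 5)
Your proof is correct. The paper itself states Proposition~\ref{prop:continuous} as a folklore result and offers no proof, only a pointer to the ``well-known transfer argument'' of \cite[Proposition~5.4]{MottoRos:2012b} (whose variant in this paper is Lemma~\ref{lemma:retraction}); what you have done is to carry out that transfer in full and then supply the absorption step, which the paper leaves entirely implicit. Your Step~1 is precisely the content of Lemma~\ref{lemma:retraction} specialized to \( \W \): the map \( A \mapsto \pi^{-1}(A) \) along a continuous retraction \( \pi \colon \pre{\omega}{\omega} \to X \) is an order-embedding that commutes with complementation, and Wadge's theorem on the Baire space then pulls back. Your Step~2 is a self-contained order-theoretic lemma (a quotient of a semi-well-ordered poset with a compatible complementation along a coarser complementation-invariant preorder is again semi-well-ordered), proved via the ``flip'' bookkeeping. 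I checked this carefully: the two applications of \( \SLO \), together with \( a_m \not\leq a_{m+1} \) (which holds since \( \leq\ \subseteq\ \preceq \) and the \( \preceq \)-chain is strict) and antisymmetry, do force \( a_{m+1} = \neg a_m \) at a flip; no two flips being adjacent and the extraction of a strictly \( \leq \)-descending subchain among the \( b_{2l+1} \)'s are both correct. This second step is exactly the nontrivial point the proposition is implicitly asking you to know, and it is worth noting that a sharper version of it appears in the Wadge-theoretic literature in the form of a structure theorem describing \( \F \)-degrees as intervals of \( \W \)-degrees (cf.\ \cite{MottoRos:2009}); your elementary argument is an acceptable and perhaps more transparent substitute. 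One cosmetic remark: the parenthetical ``(so that \( \SLO \) holds in the form \dots)'' should be read as part of the hypothesis that \( (P,\leq) \) is semi-well-ordered, not as a consequence of \( \neg \) being an automorphism.
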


\noindent
In fact,~\cite[Theorem 3.1]{MottoRos:2009} (essentially) shows that this result can be further strengthened when \( X \) itself is a closed subset 
of \( \pre{\omega}{\omega} \): if \( X \) is equipped with the restriction \( \bar{d}_X \) of the canonical metric \( \bar{d} \) on 
\( \pre{\omega}{\omega} \), then  \( \Deg_{\boldsymbol{\Delta}^1_1}(\F) \) is very good as soon as \( \F \) contains the collection \( \L(\bar{d}_X) \) of all 
\emph{\( \bar{d}_X \)-nonexpansive} functions.

Despite the above mentioned results, in~\cite[Theorem 5.4, Proposition 5.10, and Theorem 5.11]{MottoRos:2012c} it is shown that there are 
various natural reducibilities on \( \pre{\omega}{\omega} \) that actually induce (very) bad hierarchies on its Borel subsets. In particular, it is 
shown that
\( \pre{\omega}{\omega} \) can be equipped with a complete ultrametric \( d' \), still compatible with its usual product topology, such that
the \( \F \)-hierarchy on Borel (in fact, even just clopen) subsets of \( \pre{\omega}{\omega} \) is very bad for \( \F \) the collection of all the 
\( d' \)-nonexpansive (alternatively: \( d' \)-Lipschitz) functions.


Motivated by these results, in the present paper we continue this investigation by considering various complete ultrametrics on 
\( \pre{\omega}{\omega} \) (compatible with its product topology) and, more generally, the collection of all \emph{ultrametric Polish spaces \( X = (X,d) \)}, a very natural and interesting class which includes e.g.\ the space \( \mathbb{Q}_p \) of \( p \)-adic numbers (for every prime \( p \in \mathbb{N} \)).%
\footnote{More generally, the completion of any countable valued field \(K \) with valuation \( | \cdot |_K \colon K \to \RR \) and metric \( d(x,y) = | x - y |_K \) (for \( x,y \in K \)) is always an ultrametric Polish space.}
On 
such spaces, we then consider the hierarchies of degrees induced by one of the following reducibilities%
\footnote{Notice that since the metric topology on \( X \) is always zero-dimensional, it does not make much sense to consider reducibilities 
\( \F \supseteq \W(X) \), because  by Proposition~\ref{prop:continuous} they always induce a very good hierarchy on Borel 
subsets of \( X \).}
 on \( X \):
\begin{enumerate}[\( \bullet \)]
\item
the collection \( \L(d) \) of all nonexpansive functions, where \( f \colon X \to X \) is called \emph{nonexpansive} if \( d(f(x),f(y)) \leq d(x,y) \) 
for all \( x,y \in X \);
\item
the collection \( \Lip(d) \) of all Lipschitz functions (with arbitrary constants), where \( f \colon X \to X \) is a \emph{Lipschitz function with constant 
\( L \)} (for a nonnegative real \( L \)) if \( d(f(x),f(y)) \leq L \cdot d(x,y) \) for all  \( x,y \in X \);
\item
the collection \( \UCont(d) \) of all uniformly continuous functions, where \( f \colon X \to X \) is \emph{uniformly continuous} if for every  
\( \varepsilon \in \RR^+ \) there is a  \( \delta \in \RR^+ \) such that \( d(x,y) < \delta \Rightarrow d(f(x),f(y)) < \varepsilon \) for all \( x,y \in X \) 
(here \( \RR^+ \) denotes the set of strictly positive reals).
\end{enumerate}

The main results of the paper are the following:

\begin{enumerate}[(A)]
\item
The \( \UCont(d) \)-hierarchy on Borel subsets of \( X \) is always very good (Theorem~\ref{th:unifcontandlip}). Since by Proposition~\ref{prop:UCont} it is possible to equip the Baire
 space with a compatible complete ultrametric \( d' \) such that \( \L(\bar{d}) \not\subseteq \UCont(d') \) (where $\bar{d}$ is the usual metric on \( \pre{\omega}{\omega} \)), this also
 implies that \( \L(\bar{d}) \subseteq \F \) is a sufficient but not necessary condition for the \( \F \)-hierarchy on Borel subsets of 
\( \pre{\omega}{\omega} \) being very good (for \( \F \) a reducibility on \( \pre{\omega}{\omega} \)).
\item
If \( X \) is perfect, then the \( \Lip(d) \)-hierarchy on the Borel subsets of \( X \) is either very good (if \( X \) has bounded diameter), or else it is very 
bad already when restricted to clopen subsets of \( X \) (if the diameter of \( X \) is unbounded). A technical strengthening of the property of having
 (un)bounded diameter (see Definition~\ref{def:nontriviallyunbounded}) works similarly for arbitrary ultrametric Polish spaces
 (Theorems~\ref{th:unboundeddiam} and~\ref{th:notunbounded}, Corollary~\ref{cor:unboundeddiam}).
\item
If the range of \( d \) contains an honest increasing sequence (see Definition~\ref{def:honestincreasingsequence}), then the \( \L(d) \)-hierarchy on
 clopen subsets of \( X \) is very bad (Theorem~\ref{th:increasingdistances}); in particular, this happens in the special case when \( X \) is perfect and
 has unbounded diameter. If instead the range of \( d \) is either finite or a decreasing \(\omega\)-sequence converging to \( 0 \), then the 
\( \L(d) \)-hierarchy on Borel subsets of \( X \) is always very good (Theorem~\ref{th:descendingdistances}).
\item
It follows from the second part of (C) that if \( X \) is compact, then both%
\footnote{Since on compact metric spaces continuity and uniform continuity coincide, the \( \UCont(d) \)-hierarchy on Borel subsets of a compact 
\( X \) is very good already by Proposition~\ref{prop:continuous}.}
 the \( \Lip(d) \)- and the \( \L(d) \)-hierarchy on Borel subsets of \( X \) are very good (Theorem~\ref{th:compact}).
\item
If we assume the Axiom of Choice \( \AC \), then  the \( \F \)-hierarchy on (arbitrary subsets of) an uncountable \( X \) is very bad for every reducibility 
\( \F \) such that \( \L(d) \subseteq \F \subseteq \Bor (X) \), where \( \Bor(X) \) is the collection of all Borel functions from \( X \) into itself
 (Theorem~\ref{th:illfounded hierarchy}). If we further assume that \( \mathsf{V = L} \), then the \( \F \)-hierarchy on \( X \) is very bad already when
 restricted to
 \( \boldsymbol{\Pi}^1_1 \), i.e.\ coanalytic,%
\footnote{Equivalently, to  \( \boldsymbol{\Sigma}^1_1 \) (i.e.\ analytic) subsets of \( X \).}
 subsets of \( X \) (Theorem~\ref{th:illfounded hierarchy in L}).
\end{enumerate}

In particular, the results in (A)--(D) generalize those from~\cite[Section 5]{MottoRos:2012c} and answer most of the questions 
in~\cite[Section 6]{MottoRos:2012c}. Moreover, they allow us to construct discrete ultrametric Polish spaces \( X = (X,d) \) whose \( \Lip(d) \)- and 
\( \L(d) \)-hierarchies are very bad (Corollaries~\ref{cor:countable} and~\ref{cor:X_1}), a fact which contradicts the conceivable conjecture that 
the \( \Lip(d) \)- and the \( \L(d) \)-hierarchy on them need to be (very) good since all subsets of such spaces are extremely simple (i.e.\ clopen). 
Notice also that the result mentioned in (E) under the assumption \( \mathsf{V = L} \) (which is best possible for most reducibilities \( \F \) by
 Proposition~\ref{prop:continuous} and the comment following it) 
can be viewed as an extension of the well-know classical result that if \( \boldsymbol{\Pi}^1_1 \)-determinacy fails then there are proper \( \boldsymbol{\Pi}^1_1 \) sets which are not (Borel-)complete for coanalytic sets.

We end this introduction with two general remarks concerning the results presented in this paper:

\begin{enumerate}[i)]
\item
to simplify the presentation, we will consider only \( \F \)-hierarchies on \emph{Borel subsets} of a given ultrametric Polish space \( X \) (except in Section \ref{sec: choice}): this is 
because in this way we can avoid to assume any axiom beyond our basic theory \( \ZF + \DC(\RR) \). However, as usual in Wadge theory, all our 
results can be extended to larger pointclasses \( \boldsymbol{\Gamma} \subseteq \pow(X) \) by assuming corresponding determinacy axioms 
(more precisely: the determinacy of subsets of \( \pre{\omega}{\omega} \) which are Boolean combinations of sets in \( \boldsymbol{\Gamma} \)). 
In particular, under the full Axiom of Determinacy \( \AD \) (asserting that all games on \(\omega\) are determined), all these results remain true 
when considering \emph{unrestricted} \( \F \)-hierarchies \( \Deg(\F) \) on \( X \);
\item
when showing that a given \( \F \)-hierarchy on \( X \) (possibly restricted to some \( \boldsymbol{\Gamma} \subseteq \pow(X) \)) is very bad, we will
 actually show that some very complicated partial (quasi-)order on \( \pow(\omega) \), like the inclusion relation \( \subseteq \), or even the more
 complicated relation \( \subseteq^* \) of inclusion modulo finite sets, embeds into such a hierarchy. This gives much stronger results, as it implies 
e.g.\ that the \( \F \)-hierarchy under consideration contains antichains of size the continuum and, in the case of \( \subseteq^* \), that (under 
\( \AC \)) every partial order of size \( \aleph_1 \) embeds into the \( \F \)-hierarchy on (\( \boldsymbol{\Gamma} \)-subsets of) \( X \) (see~\cite{Parovivcenko:1963}).
\end{enumerate}

\section{Basic facts about ultrametric Polish spaces} \label{sec:ultrametric}

Given a metric space \( X = (X,d) \), we denote by \( \tau_d \) the \emph{metric topology (induced by \( d \))}, i.e.\ the topology generated by the 
basic open balls \( B_d(x,\varepsilon) = \{ y \in X \mid d(x,y) < \varepsilon  \} \) (for some \(x \in X \) and \( \varepsilon \in \RR^+ \)). When considered 
as a topological space, the space \( X \) is tacitly endowed with such topology, and therefore we will e.g.\ say that the metric space \( X \) is separable 
if there is a countable \( \tau_d \)-dense subset of \( X \), and similarly for all other topological notions. The diameter of \( X \) is \emph{bounded} if
 there is \( R \in \RR^+ \) such that \( \sup \{ d(x,y) \mid x,y \in X \} \leq R \), and \emph{unbounded} otherwise.

A metric \( d \) on a space \( X \) is called \emph{ultrametric} if it satisfies the following strengthening of the triangle inequality, for all \( x,y,z \in X \):
\[ 
d(x,z) \leq \max \{ d(x,y), d(y,z) \}.
 \] 

\begin{definition}
An \emph{ultrametric Polish space} is a separable metric space \( X = (X,d) \) such that \( d \) is a complete ultrametric. The collection of all 
ultrametric Polish spaces will be denoted by \( \mathscr{X} \).
\end{definition}

Every (\( \tau_d \)-)closed subspace \( C \) of an ultrametric Polish space \( X = (X,d) \) will be tacitly equipped with the metric 
\( d_C = d \restriction C \), which is obviously a complete ultrametric compatible with the relative topology on \( C \) induced by \( \tau_d \). 
When there is no danger of confusion, with a little abuse of notation the metric \( d_C \) will be sometimes denoted by \( d \) again.

\begin{notation}
Given an ultrametric Polish space \( X = (X,d) \), we set \( R(d) = \{ d(x,y) \mid x,y \in X, x \neq y \} \), the set of all nonzero distances realized in \( X \).
\end{notation}%

A typical example of an ultrametric Polish space is obtained by equipping the Baire space with the usual metric \( \bar{d} \) defined by 
\[ 
\bar{d}(x,y) = 
\begin{cases}
0 & \text{if } x = y \\
2^{-n} & \text{if \( n \) is smallest such that } x(n) \neq y(n):
\end{cases}
 \] 
it is straightforward to check that \( \bar{d} \) is actually an ultrametric generating the product topology on \( \pre{\omega}{\omega} \), and 
obviously \( R(\bar{d}) = \{ 2^{-n} \mid n \in \omega \} \). We will keep denoting this ultrametric by \( \bar{d} \) throughout the paper.

We collect here some easy but useful facts about arbitrary ultrametric (Polish) spaces \( X = (X,d) \):
\begin{enumerate}[(1)]
\item
for every \( x,y,z \in X \) two of the distances \( d(x,y) \), \( d(x,z) \), \( d(y,z) \) are equal, and they are greater than or equal to the third 
(the ``isosceles triangle'' rule);
\item
for every \( x,y,z \in X \), if \( d(x,z) \neq d(y,z) \) then \( d(x,y) = \max \{ d(x,z),\) \(d(y,z) \} \). In particular, if \( x,y,z,w \in X \) are such that 
\( d(x,z),d(y,w) < d(x,y) \) then \( d(z,w) = d(x,y) \);
\item
given a (\( \tau_d \)-)dense set \( Q \subseteq X \), all distances are realized by elements of \( Q \), that is: for every \( x,y \in X \) there are 
\( q,p \in Q \) such that \( d(x,y) = d(q,p) \). In particular, if \( X \) is separable then \( R(d) \) is countable;%
\footnote{Vice versa, for every countable \( R \subseteq \RR^+  \) there is an ultrametric Polish space \( X = (X,d) \) such that \( R(d) = R  \), for example \( X=R\cup\{0\} \) with \( d(x,y)=\max\{x,y\} \) for distinct \( x , y \in X \).}
\item
for every \( x \in X \) and \( r \in \RR^+ \) the open ball \( B_d(x,r)  \) is actually clopen, and \( B_d(y,r) = B_d(x,r) \) for every \( y \in B_d(x,r) \). In
 particular,  the topology \( \tau_d \) is always zero-dimensional, and hence if \( X \) is an ultrametric Polish space, then it is homeomorphic to a 
closed subset of the Baire space by~\cite[Theorem 7.8]{Kechris:1995} (see also Lemma~\ref{lemma:unifcontandlip});  
\item
given \( x,y \in X \) and \( r,s \in \RR^+ \), the (cl)open balls \( B_d(x,r) \) and \( B_d(y,s) \) are either disjoint, or else one of them contains the other.
\end{enumerate}

To simplify the terminology, we adapt the definition of family of reducibilities introduced in~\cite[Definition 5.1]{MottoRos:2012b} to the restricted
 context of ultrametric Polish spaces.

\begin{definition}
Let \( \F \) be a collection of functions between any ultrametric Polish spaces. For \( X,Y \in \mathscr{X} \), denote by \( \F(X,Y) \) the collection of all
 functions from \( \F \) with domain \( X \) and range included in \( Y \). The collection \( \F \) is called \emph{family of reducibilities} (on 
\( \mathscr{X} \)) if:
\begin{enumerate}
\item
it contains all the identity functions, i.e.\ \( \id_X \in \F(X,X) \) for every \( X \in \mathscr{X} \);
\item
it is closed under composition, i.e.\ for every \( X,Y,Z \in \mathscr{X} \), \( f \in \F(X,Y) \), and \( g \in \F(Y,Z) \), the function \( g \circ f \) belongs to
\( \F(X,Z) \);
\end{enumerate}
\end{definition}%

Examples of family of reducibilities are the collections of all continuous functions, of all uniformly continuous functions, of all Lipschitz functions, 
and of all nonexpansive functions. Notice also that  if \( \F \) is a family of reducibilities then \( \F(X) =  \F(X,X) \) is a reducibility on the space \( X \) 
(for every \( X \in \mathscr{X} \)). The next simple lemma is a minor variation of~\cite[Proposition 5.4]{MottoRos:2012b} and can be proved in a
 similar way.

\begin{lemma} \label{lemma:retraction}
Let \( \F \) be a family of reducibilities and \( X,Y \in \mathscr{X} \). Suppose that there is a surjective \( f \in \F(X,Y) \) admitting a right inverse 
\( g \in \F(Y,X) \). Then there is an embedding from \( (\pow(Y), \leq_{\F(Y)}, \neg) \) into \( (\pow(X), \leq_{\F(X)}, \neg ) \).

In particular, if \( \F \) consists of Borel functions
and the \( \F(X) \)-hierarchy on Borel subsets of \( X \) is (very) good, then also the \( \F(Y) \)-hierarchy on Borel subsets of \( Y \) is (very) good. 
\end{lemma}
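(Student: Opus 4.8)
The plan is to prove Lemma~\ref{lemma:retraction} in two stages: first construct the embedding of $(\pow(Y), \leq_{\F(Y)}, \neg)$ into $(\pow(X), \leq_{\F(X)}, \neg)$, and then deduce the transfer of ``(very) goodness'' as an immediate corollary. For the first stage, let $f \in \F(X,Y)$ be the given surjection and $g \in \F(Y,X)$ the right inverse, so that $f \circ g = \id_Y$. I would define the candidate embedding $\Phi \colon \pow(Y) \to \pow(X)$ by $\Phi(B) = f^{-1}(B)$ for $B \subseteq Y$. The verification splits into three points: that $\Phi$ commutes with complementation, that $\Phi$ is order-preserving, and that $\Phi$ reflects the order (hence is injective on $\equiv_{\F}$-classes, which is all an embedding of the degree structures requires, but in fact we get literal injectivity since $f$ is onto).

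The complementation clause is the trivial observation that $f^{-1}(Y \setminus B) = X \setminus f^{-1}(B)$. For the forward direction of order preservation, suppose $B \leq_{\F(Y)} B'$ witnessed by some $h \in \F(Y,Y)$ with $B = h^{-1}(B')$; then $f^{-1}(B) = f^{-1}(h^{-1}(B')) = (h \circ f)^{-1}(B')$, and $h \circ f \in \F(X,Y)$. To land back inside $\F(X,X)$ I compose with $g$: the function $g \circ h \circ f$ belongs to $\F(X,X)$ by closure under composition, and $(g \circ h \circ f)^{-1}(f^{-1}(B')) = (f \circ g \circ h \circ f)^{-1}(B') = (h \circ f)^{-1}(B') = f^{-1}(B)$, using $f \circ g = \id_Y$. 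Hence $\Phi(B) = f^{-1}(B) \leq_{\F(X)} f^{-1}(B') = \Phi(B')$. For the reverse direction, suppose $\Phi(B) \leq_{\F(X)} \Phi(B')$ via $k \in \F(X,X)$, so $f^{-1}(B) = k^{-1}(f^{-1}(B'))$. Then the function $f \circ k \circ g \in \F(Y,Y)$ satisfies $(f \circ k \circ g)^{-1}(B') = g^{-1}(k^{-1}(f^{-1}(B'))) = g^{-1}(f^{-1}(B)) = (f \circ g)^{-1}(B) = B$, so $B \leq_{\F(Y)} B'$. This simultaneously gives order-reflection and, together with the forward direction, shows $B \equiv_{\F(Y)} B'$ iff $\Phi(B) \equiv_{\F(X)} \Phi(B')$, so $\Phi$ descends to a genuine embedding of $\Deg(\F(Y))$ into $\Deg(\F(X))$ respecting $\leq$ and the duality involution $\neg$.

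For the ``in particular'' clause: assume $\F$ consists of Borel functions and the $\F(X)$-hierarchy on Borel subsets of $X$ is (very) good. If $B \subseteq Y$ is Borel, then $\Phi(B) = f^{-1}(B)$ is Borel because $f$ is a Borel (indeed continuous) function; thus $\Phi$ restricts to an embedding of the $\F(Y)$-hierarchy on Borel subsets of $Y$ into the $\F(X)$-hierarchy on Borel subsets of $X$. Being a well-quasi-order, or being semi-well-ordered, is preserved under taking substructures of a partial (quasi-)order equipped with the duality map --- infinite antichains and infinite descending chains in the image would pull back to infinite antichains and infinite descending chains in $\Deg_{\boldsymbol{\Delta}^1_1}(\F(Y))$, and likewise $\SLO^{\F}$ and well-foundedness are inherited downward --- so the $\F(Y)$-hierarchy on Borel subsets of $Y$ is (very) good as well.

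I do not expect any serious obstacle here: the entire argument is bookkeeping with preimages and the identities $f \circ g = \id_Y$ and closure of $\F$ under composition. The one point requiring a line of care is making sure that the reducing functions produced stay \emph{within $\F(X,X)$ resp.\ $\F(Y,Y)$} rather than merely being set-theoretic preimage-reducers --- this is exactly why the right inverse $g$ is needed, to ``fold'' maps $X \to Y$ back to $X \to X$ and maps into $Y$ back through $g$. A secondary, purely expository, point is to state precisely in what sense substructures of (very) good hierarchies remain (very) good; since ``good'' and ``very good'' are defined by the absence of certain infinite configurations (and, for ``very good'', by $\SLO$ plus well-foundedness, both downward-absolute), this is routine.
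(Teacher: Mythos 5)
Your proof is correct and takes the same approach as the paper: the paper's entire argument is the single line ``the map $\pow(Y) \to \pow(X) \colon A \mapsto f^{-1}(A)$ is the desired embedding,'' and your writeup is precisely the verification of that claim, with $g \circ h \circ f$ and $f \circ k \circ g$ doing the folding back into $\F(X,X)$ and $\F(Y,Y)$ respectively.
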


\begin{proof}
The map \( \pow(Y) \to \pow(X) \colon A \mapsto f^{-1}(A) \) is the desired embedding. 
\end{proof}

\section{Uniformly continuous and Lipschitz reducibilities}

In~\cite[Question 6.2]{MottoRos:2012c}, it is asked whether one can equip the Baire space \( \pre{\omega}{\omega} \) with a compatible complete
 ultrametric \( d' \) so that \( \L(\bar{d}) \not\subseteq \UCont(d') \), and whether it is possible to strengthen this last condition to: the 
\( \UCont(d') \)-hierarchy on \( X \) is (very) bad. We start by answering positively the first part of this question.

\begin{notation}
Given a function \( \phi \colon \omega \to \RR^+ \), we denote by \( \rg (\phi) \) the \emph{range of \( \phi \)}, i.e.\ \( \rg(\phi) = \{ r \in \RR^+ \mid \exists n \in \omega \, ( \phi(n) = r) \} \).
\end{notation}

\begin{definition} \label{def: d_phi}
Given a function \( \phi \colon \omega \to  \RR^+  \) with \( \inf \rg (\phi)>0 \), define the metric \( d_\phi \) on \( \pre{\omega}{\omega} \) by setting for every 
\( x,y \in \pre{\omega}{\omega} \)
\[ 
d_\phi(x,y) = \max \{ \phi(x(0)), \phi(y(0)) \} \cdot \bar{d}(x,y).
 \] 
\end{definition}

It is not hard to check that each \( d_\phi \) is a complete ultrametric compatible with the product topology on \( \pre{\omega}{\omega} \) (and that \( \inf \rg(\phi)>0 \) is necessary for completeness). 

\begin{notation}
Given a natural number \( i \in \omega \) and an ordinal \(\alpha\), we denote by \( i^{(\alpha)} \) the constant \(\alpha\)-sequence with value \( i \).
\end{notation}

\begin{proposition} \label{prop:UCont}
Let \( \phi \colon \omega \to \RR^+  \colon n \mapsto 2^{n} \). Then \( \L(\bar{d}) \not\subseteq \UCont(d_\phi) \).
\end{proposition}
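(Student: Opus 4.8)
The plan is to exhibit one function \( f \colon \pre{\omega}{\omega} \to \pre{\omega}{\omega} \) that is \( \bar{d} \)-nonexpansive but not \( d_\phi \)-uniformly continuous. The guiding idea is as follows. A map \( f \) is \( \bar{d} \)-nonexpansive exactly when \( f(x)(k) \) depends only on \( x \restriction (k+1) \) for every \( k \); in particular \( f(x)(0) \) depends only on \( x(0) \), so \( f \) carries each clopen ball \( \{ x \mid x(0) = m \} \) into a clopen ball \( \{ y \mid y(0) = g(m) \} \) for some \( g \colon \omega \to \omega \). On these two balls the metric \( d_\phi \) equals \( 2^m \bar{d} \) and \( 2^{g(m)} \bar{d} \) respectively, so an \( f \) that acts as the identity on the coordinates \( \geq 1 \) multiplies \( d_\phi \)-distances inside the \( m \)-th ball exactly by the factor \( 2^{g(m) - m} \). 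Since that ball contains pairs of points at arbitrarily small \( d_\phi \)-scale, \( f \) will fail to be \( d_\phi \)-uniformly continuous as soon as \( g(m) - m \) is unbounded in \( m \).

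Carrying this out, I would take \( f(x) = \langle 2 \cdot x(0) \rangle {}^\frown \langle x(1), x(2), \dots \rangle \), i.e.\ \( f(x)(0) = 2 x(0) \) and \( f(x)(k) = x(k) \) for \( k \geq 1 \), so that \( g(m) = 2m \). The first step is to check \( f \in \L(\bar{d}) \): if \( x \) and \( y \) share their first \( n \) coordinates with \( n \geq 1 \), then \( x(0) = y(0) \) gives \( f(x)(0) = f(y)(0) \) and \( x(k) = y(k) \) for \( 1 \leq k < n \) gives \( f(x)(k) = f(y)(k) \), so \( f(x) \) and \( f(y) \) also share their first \( n \) coordinates and \( \bar{d}(f(x), f(y)) \leq \bar{d}(x,y) \) (the inequality being trivial when \( x(0) \neq y(0) \)). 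The second step is to refute uniform continuity with \( \varepsilon = 1 \): given \( \delta \in \RR^+ \), fix \( m \geq 1 \) with \( 2^{-m} < \delta \), put \( n = 2m \), and set \( x = m^{(1)} {}^\frown 0^{(\omega)} \) and \( y = m^{(1)} {}^\frown 0^{(n-1)} {}^\frown 1^{(1)} {}^\frown 0^{(\omega)} \). Then \( x \) and \( y \) agree in coordinate \( 0 \) and first differ in coordinate \( n \), so \( d_\phi(x,y) = 2^m \cdot 2^{-n} = 2^{-m} < \delta \), while \( f(x) \) and \( f(y) \) also agree in coordinate \( 0 \) (with common value \( 2m \)) and first differ in coordinate \( n \), so \( d_\phi(f(x), f(y)) = 2^{2m} \cdot 2^{-n} = 1 = \varepsilon \). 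Hence \( f \notin \UCont(d_\phi) \), and therefore \( \L(\bar{d}) \not\subseteq \UCont(d_\phi) \).

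I do not anticipate a genuine difficulty; the only point to get right is the rate at which \( f \) inflates the first coordinate. A bounded modification such as \( x(0) \mapsto x(0) + c \) keeps the stretching factor \( 2^{g(m) - m} \) bounded and hence would break uniform continuity only at scales \( \delta \) above a fixed threshold, which is not enough; choosing \( g \) with \( g(m) - m \to \infty \) (here \( g(m) = 2m \)) repairs this, after which the argument reduces to the elementary computations with \( \bar{d} \) and \( d_\phi \) indicated above.
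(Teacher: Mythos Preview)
Your proof is correct and takes essentially the same approach as the paper: exhibit a \( \bar{d} \)-nonexpansive map that inflates the first coordinate (the paper uses \( n \mapsto 3n \), you use \( n \mapsto 2n \)) and then pick pairs of points agreeing on sufficiently many coordinates so that the inflated weight \( 2^{g(m)} \) overwhelms the common \( \bar{d} \)-distance. The only cosmetic difference is that the paper checks the failure of uniform continuity for every \( \varepsilon \) simultaneously, whereas you fix \( \varepsilon = 1 \); either is enough.
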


\begin{proof}
Consider the map \( f \colon \pre{\omega}{\omega} \to \pre{\omega}{\omega} \colon n {}^\smallfrown{} x \mapsto 3n {}^\smallfrown{} x \). 
We show that for every \( \varepsilon,\delta \in \RR^+ \) there are \(x,y \in \pre{\omega}{\omega} \) such that \( d_\phi(x,y) < \delta \) but 
\( d_\phi(f(x),f(y)) > \varepsilon \). Let \( 0 \neq k \in \omega \) be such that \(2^{-k} < \delta \). Then for every \( n \geq k \) we get that setting 
\( x = n^{(2n)} {}^\smallfrown{} 0^{(\omega)} \) and \( y = n^{(2n)} {}^\smallfrown{} 1^{(\omega)} \), 
\[ d_\phi(x,y) = 2^n \cdot 2^{-2n} = 2^{-n} \leq 2^{-k} < \delta . \]
However, 
\[ 
d_\phi(f(x),f(y)) = 2^{3n} \cdot 2^{-2n} = 2^n,
 \] 
hence letting \( n \) be large enough we get \( d_\phi(f(x),f(y)) > \varepsilon \), as desired.
\end{proof}

In order to answer the second half of~\cite[Question 6.2]{MottoRos:2012c}, we abstractly analyze the behavior of the \( \UCont(d) \)-hierarchy on 
an arbitrary ultrametric Polish space \( X = (X,d) \). 
The following lemma uses standard arguments (see e.g.\ the proof of~\cite[Theorem 7.8]{Kechris:1995}), but we fully reprove it here for the 
reader's convenience.

\begin{lemma} \label{lemma:unifcontandlip} 
Let $X = (X,d)$ be an ultrametric Polish space. Then there is a closed set \( C \subseteq \pre{\omega}{\omega} \) and a bijection 
$f \colon (C, \bar{d}) \to (X,d)$ such that $f$ is uniformly continuous and $f^{-1}$ is nonexpansive.  Moreover, if $X$ has bounded diameter,
 then $f$ is even Lipschitz,
and if \( X \) has diameter \( \leq 1 \) then we can alternatively require \( f \) to be nonexpansive and \( f^{-1} \) to be Lipschitz with constant \( 2 \).
\end{lemma}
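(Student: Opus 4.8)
The plan is to mimic the standard proof that a zero-dimensional Polish space is homeomorphic to a closed subset of $\pre{\omega}{\omega}$, but keeping careful track of metrics. Using fact (4) about ultrametric Polish spaces, every open ball is clopen and any two balls of the same radius are either equal or disjoint. First I would fix a countable dense set and, exploiting separability together with fact (3) (so that $R(d)$ is countable), choose a decreasing sequence of ``resolutions'' and recursively build, level by level, a Lusin scheme: a family $\{U_s \mid s \in \pre{<\omega}{\omega}\}$ of clopen subsets of $X$ such that $U_\emptyset = X$, for each $s$ the sets $U_{s {}^\smallfrown{} i}$ ($i \in \omega$) form a partition of $U_s$ into clopen pieces, each nonempty $U_{s {}^\smallfrown{} i}$ has $d$-diameter at most (say) $2^{-\leng(s)}$ times a fixed bound — or, in the unbounded-diameter case, diameter tending to $0$ along each branch without a uniform rate — and every nonempty $U_s$ with $\leng(s) \geq 1$ actually is a ball of radius roughly $2^{-\leng(s)}$. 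Since $X$ is an ultrametric space, such partitions into balls exist at every scale. Let $C = \{x \in \pre{\omega}{\omega} \mid \forall n \, U_{x \restriction n} \neq \emptyset\}$; this is closed in $\pre{\omega}{\omega}$, and by completeness of $d$ and the vanishing diameters, for each $x \in C$ the intersection $\bigcap_n U_{x \restriction n}$ is a single point $f(x)$, giving a bijection $f \colon C \to X$.

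Next I would verify the metric properties. Uniform continuity of $f$ is immediate: if $\bar{d}(x,y) \leq 2^{-n}$ then $x \restriction n = y \restriction n$, so $f(x), f(y) \in U_{x \restriction n}$, whence $d(f(x),f(y))$ is at most the (uniformly small, in the bounded case) diameter of that set; in the unbounded case one still gets $d(f(x),f(y)) \to 0$ as $\bar{d}(x,y) \to 0$ uniformly, because the diameters of level-$n$ pieces, although not globally bounded, can be arranged to be controlled once we also agree that $U_{s {}^\smallfrown{} i} = \emptyset$ for all but finitely many $i$ is \emph{not} required — here one must be slightly careful, and the trick is to index so that the first coordinate already separates $X$ into the pieces responsible for large distances, exactly as in Definition~\ref{def: d_phi}. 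For $f^{-1}$ nonexpansive I want $\bar{d}(x,y) \leq d(f(x),f(y))$, i.e. if $f(x)$ and $f(y)$ lie in different level-$n$ pieces then their $d$-distance is at least $2^{-n}$; this is why each nonempty $U_s$ at level $n \geq 1$ should be chosen to be a ball of radius exactly (or at least) $2^{-n}$, so that two points in distinct level-$n$ balls that agree up to level $n-1$ are at distance $\geq 2^{-n}$ by fact (4) — distinct balls of the same radius are disjoint and the ultrametric forces the distance to be at least that radius. Matching the indices $i \mapsto$ (enumeration of the sub-balls) with the natural-number labels in $\pre{<\omega}{\omega}$ gives $\bar{d}(x,y) \leq d(f(x),f(y))$ exactly.

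For the ``moreover'' clauses: if $\operatorname{diam}(X) \leq R < \infty$, rescale so that level-$n$ pieces have diameter $\leq R \cdot 2^{-n}$, and then $d(f(x),f(y)) \leq R \cdot \bar{d}(x,y)$, i.e. $f$ is Lipschitz with constant $R$; if moreover $\operatorname{diam}(X) \leq 1$ I would instead arrange the scheme so that level-$n$ pieces are balls of radius $\leq 2^{-n}$ \emph{and} any two distinct level-$n$ pieces sharing a level-$(n-1)$ parent are at mutual distance $> 2^{-n-1}$ (shift the radius by one), which makes $f$ nonexpansive, $d(f(x),f(y)) \leq \bar{d}(x,y)$, while conversely two points in different level-$n$ balls are at distance $\geq 2^{-n-1} = \tfrac12 \bar d(x,y)$ when $\bar d(x,y) = 2^{-n}$, giving $\bar{d}(x,y) \leq 2\, d(f(x),f(y))$, i.e. $f^{-1}$ Lipschitz with constant $2$. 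The main obstacle is purely bookkeeping: simultaneously arranging that each $U_s$ is a genuine ball of the right radius (so that the lower bound $\bar d \leq d(f(\cdot),f(\cdot))$ holds) \emph{and} that these balls partition their parent into countably many pieces indexed by $\omega$ — the delicate point is that at a given level the available ball-radii lie in $R(d)$, not in $\{2^{-n}\}$, so one has to interleave the $\bar d$-scale with the $d$-scale and argue that choosing, at level $n$, balls of $d$-radius in the half-open interval $(2^{-n-1}, 2^{-n}]$ (or the appropriate rescaled interval) is always possible and still yields vanishing diameters along branches. Once that indexing is set up, all the inequalities above are one-line checks.
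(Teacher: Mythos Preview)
Your approach is exactly the paper's: build a Luzin scheme $(A_s)_{s \in \pre{<\omega}{\omega}}$ with $A_\emptyset = X$ and, given $A_s$, let the children $A_{s {}^\smallfrown{} i}$ enumerate (without repetition) the balls $\{ B_d(q, 2^{-\leng(s)}) \mid q \in Q \cap A_s \}$ for a fixed countable dense $Q$; then $C$ and $f$ are defined as you say, and for the diameter-$\leq 1$ variant one uses radius $2^{-(\leng(s)+1)}$ instead.

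You are, however, manufacturing difficulties that are not there. First, the unbounded case needs \emph{no} special handling for uniform continuity: for $n \geq 1$ every level-$n$ piece is a ball of radius $2^{-(n-1)}$, and in an ultrametric such a ball has diameter $< 2^{-(n-1)}$ regardless of whether $X$ itself is bounded. So if $\bar d(x,y) < 2^{-n}$ then $x \restriction (n+1) = y \restriction (n+1)$ and $d(f(x),f(y)) < 2^{-n}$ immediately --- there is no ``trick with the first coordinate'', and the diameters do shrink at a uniform rate. Unboundedness only matters when upgrading to Lipschitz (where the paper uses constant $\max\{2,k\}$ for $k$ a bound on the diameter). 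Second, there is no ``delicate point'' about radii lying in $R(d)$: the ball $B_d(x,2^{-n})$ is well defined for every $n$ whether or not $2^{-n}$ is a realized distance, and the ultrametric axioms alone guarantee that the balls of a fixed radius partition $X$. Drop these two detours and your sketch is a complete proof.
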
 

\begin{proof}
Let \( Q \) be a countable dense subset of \( X \).
Define the sets \( A_s  \subseteq X\) for \( s \in \pre{< \omega}{\omega} \) recursively on \( \leng(s) \) as follows: \( A_\emptyset = X \).  
Given \( A_s \subseteq X\), 
let \(  \{ B_{s,i} \mid i < I \} \) (for some \( I \leq \omega \)) be an enumeration without repetitions of the set of open balls 
\( \{ B_d(x,2^{-\leng(s)}) \mid x \in Q \cap A_s \} \), and set \( A_{s {}^\smallfrown{} i} = B_{s,i} \) if \( i < I \) and 
\( A_{s {}^\smallfrown{} i} = \emptyset \) otherwise. Since \( d \) is an ultrametric, one can easily check that the family 
\( (A_s)_{s \in \pre{< \omega}{\omega}} \) is a Luzin scheme with vanishing diameter consisting of clopen sets, and with the further property that 
\( A_s = \bigcup_{n \in \omega} A_{s {}^\smallfrown{} n} \) for every \( s \in \pre{< \omega}{\omega} \). Therefore the set 
\( C = \{ x \in \pre{\omega}{\omega} \mid \bigcap_{n \in \omega} A_{x \restriction n} \neq \emptyset \} \) is a closed subset of 
\( \pre{\omega}{\omega} \), and the map \( f \colon C \to X \) sending \( x \in C \) to the unique element in 
\( \bigcap_{n \in \omega} A_{x \restriction n} \) is a bijection. So it remains only to check that such \( f \) has the desired properties.

Given \( \varepsilon > 0 \), let \( n \in \omega \) be smallest such that \( 2^{-n} \leq \varepsilon \), and set \( \delta = 2^{-n} \). If \( x,y \in C \) are such
 that \( \bar{d}(x,y) < \delta \), then \( x \restriction (n+1) = y \restriction (n+1) \), which implies \( f(x),f(y) \in A_{x \restriction (n+1)} \). By definition of
 the \( A_s \), this implies that \( d(f(x),f(y)) < 2^{-n} \leq \varepsilon \). This shows that \( f \) is uniformly continuous.

Further assuming that \( X \) be of bounded diameter, we get that \( f \) is Lipschitz with constant \( \max \{ 2,k \} \), where \( k \in \omega \) is an
 arbitrary bound to the diameter of \(X \), i.e.\ it is such that \( d(x,y) \leq k \) for every \( x,y \in X \). To see this, fix distinct \( x,y \in C\). If 
\(x(0) \neq y(0) \) then \( d(f(x),f(y)) \leq k \leq k \cdot \bar{d}(x,y) \) by our choice of \( k \in \omega \). Let now \( n \neq 0 \) be smallest such that 
\( x(n) \neq y(n) \), so that \( \bar{d}(x,y) = 2^{-n} \). Since \( x \restriction n = y \restriction n \) we get that \( f(x),f(y) \in A_{x \restriction n} \), 
which implies \( d(f(x),f(y)) < 2^{-(n-1)} \): therefore \( d(f(x),f(y)) < 2 \cdot 2^{-n} = 2 \cdot d(x,y) \).

Now fix \( x,y \in C \), and let \( n \in \omega \) be such that \( \bar{d}(x,y) = 2^{-n} \). Since \( x(n) \neq y(n) \) implies 
\( A_{x \restriction (n+1)} \cap A_{y \restriction (n+1)} = \emptyset \), we get that \( d(f(x),f(y)) \geq 2^{-n} \) (because \( d \) is an ultrametric), 
and hence \( \bar{d}(x,y) \leq d(f(x),f(y)) \). This shows that \( f^{-1} \) is nonexpansive.

Finally, assume that \( X \) has diameter \( \leq 1 \). In the construction above, redefine the collections \( \{ B_{s,i} \mid i < I \} \) as enumerations
 without repetitions of the sets \( \{ B_d(x,2^{-(\leng(s)+1)}) \mid x \in Q \cap A_s \} \), and then use this new sets to define the \( A_s \)'s and the map \( f \).
 Arguing as before, one can easily check that \( f \) is now nonexpansive while \( f^{-1} \) is Lipschitz with constant \( 2 \), as required.
\end{proof}

\begin{remark}
The special case of Lemma~\ref{lemma:unifcontandlip} where \( X \) has diameter \( \leq 1 \) already appeared (with the same proof) 
in~\cite[Theorem 4.1]{MottoRos:2009b}. However, such a result cannot be  literally extended to an arbitrary ultrametric Polish space \( X \), and in fact 
the assumptions in Lemma~\ref{lemma:unifcontandlip} are optimal. To see this, note that if \( X \) has unbounded diameter then we cannot 
require a map \( f \) as in Lemma~\ref{lemma:unifcontandlip} to be Lipschitz because every Lipschitz image of a space with
 bounded diameter (like any set \( C \subseteq \pre{\omega}{\omega} \)) has necessarily bounded diameter too. Similarly, a nonexpansive image of a 
set of diameter 
\( \leq R \) (for some \( R \in \RR^+ \)), has diameter \( \leq R \) too.
\end{remark}
•

\begin{definition}
Let \( X \) be a topological space, \( \F \) be a collection of functions from \( X \) to itself, and \( A \subseteq X \). We call \emph{\( \F \)-retraction} of 
\( X \) onto \( A \) any surjection \( f \in \F \) from \( X \) onto \( A \) such that \( f \restriction A = \id_A \); if such a function exists we also say that \( A \) 
is an \emph{\( \F \)-retract} of \( X \).
\end{definition}

Recall from  \cite[Proposition 2.8]{Kechris:1995} that if \( \emptyset \neq A \subseteq C \) are closed subsets of the Baire space, then there is an 
\( \L(\bar{d}_C) \)-retraction (i.e.\ a nonexpansive retraction) of \( C \) onto \( A \) --- a fact that will be repeatedly used throughout the paper. 
The next corollary generalizes this result to arbitrary ultrametric Polish spaces, provided that we slightly weaken the requirement that the 
retraction be nonexpansive.

\begin{corollary} \label{cor:retraction} 
Let \( X = (X,d) \) be an ultrametric Polish space. For every nonempty closed \( A \subseteq X \), there is a uniformly continuous retraction 
\( r \colon X \twoheadrightarrow A \). If moreover \( A \) has bounded diameter, then the retraction \( r \) can be taken to be Lipschitz.
\end{corollary}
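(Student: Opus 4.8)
The plan is to derive Corollary~\ref{cor:retraction} from Lemma~\ref{lemma:unifcontandlip} together with the classical fact that nonempty closed subsets of the Baire space are nonexpansive retracts. First I would fix a nonempty closed \( A \subseteq X \) and apply Lemma~\ref{lemma:unifcontandlip} to obtain a closed set \( C \subseteq \pre{\omega}{\omega} \) and a bijection \( f \colon (C,\bar{d}) \to (X,d) \) with \( f \) uniformly continuous and \( f^{-1} \) nonexpansive. Then \( f^{-1}(A) \) is a nonempty closed subset of \( C \) (hence of \( \pre{\omega}{\omega} \)), so by \cite[Proposition 2.8]{Kechris:1995} there is a nonexpansive retraction \( \rho \colon C \twoheadrightarrow f^{-1}(A) \) with \( \rho \restriction f^{-1}(A) = \id \). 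The natural candidate is then \( r = f \circ \rho \circ f^{-1} \colon X \twoheadrightarrow A \).

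Next I would check that \( r \) is a retraction of \( X \) onto \( A \): it is a surjection onto \( A \) since \( f^{-1} \) maps \( X \) onto \( C \), \( \rho \) maps \( C \) onto \( f^{-1}(A) \), and \( f \) maps \( f^{-1}(A) \) onto \( A \) bijectively; and for \( x \in A \) we have \( f^{-1}(x) \in f^{-1}(A) \), so \( \rho(f^{-1}(x)) = f^{-1}(x) \) and hence \( r(x) = f(f^{-1}(x)) = x \), i.e.\ \( r \restriction A = \id_A \). For uniform continuity, \( r \) is a composition of uniformly continuous functions: \( f^{-1} \) is nonexpansive (hence uniformly continuous), \( \rho \) is nonexpansive (hence uniformly continuous), and \( f \) is uniformly continuous; since uniform continuity is preserved under composition, \( r \) is uniformly continuous.

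Finally, for the ``moreover'' clause assume \( A \) has bounded diameter, say \( d(x,y) \leq R \) for all \( x,y \in A \). Here I would be slightly careful: the global map \( f \) produced by Lemma~\ref{lemma:unifcontandlip} need only be Lipschitz when the \emph{whole} space \( X \) has bounded diameter, not merely \( A \). To handle this I would apply the lemma not to \( X \) but observe that what matters for the composite is the behaviour of \( f \) on \( f^{-1}(A) \), and on that closed set the distances are bounded; more precisely, since \( f^{-1} \) is nonexpansive and \( \rho \) has range \( f^{-1}(A) \), the map \( \rho \circ f^{-1} \colon X \to f^{-1}(A) \) is nonexpansive with range of \( \bar{d} \)-diameter \( \leq R \), and it suffices to note that the restriction of the bijection \( f \) from Lemma~\ref{lemma:unifcontandlip} to the bounded-diameter closed set \( f^{-1}(A) \subseteq \pre{\omega}{\omega} \) is Lipschitz. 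Indeed, for distinct \( u,v \in f^{-1}(A) \) with \( \bar{d}(u,v) = 2^{-n} \), if \( n \neq 0 \) then as in the proof of Lemma~\ref{lemma:unifcontandlip} we get \( d(f(u),f(v)) < 2\cdot 2^{-n} \), while if \( n = 0 \) then \( d(f(u),f(v)) \leq R = R \cdot \bar{d}(u,v) \); hence \( f \restriction f^{-1}(A) \) is Lipschitz with constant \( \max\{2,R\} \). Composing with the nonexpansive map \( \rho \circ f^{-1} \) shows \( r \) is Lipschitz. The main obstacle I anticipate is exactly this last point — making sure the Lipschitz estimate goes through using only the bounded diameter of \( A \) rather than of \( X \); everything else is a routine unwinding of compositions of maps with known moduli of continuity.
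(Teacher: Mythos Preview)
Your proof is correct and follows essentially the same approach as the paper: both define \( r = f \circ \rho \circ f^{-1} \) using the nonexpansive retraction \( \rho \) of \( C \) onto \( f^{-1}(A) \), and for the Lipschitz clause both observe that the restriction \( f \restriction f^{-1}(A) \) is Lipschitz because \( A \) (hence \( f^{-1}(A) \)) has bounded diameter. The paper merely says ``arguing as in the proof of Lemma~\ref{lemma:unifcontandlip}'' for this last step, whereas you have spelled out the case split \( n = 0 \) versus \( n \neq 0 \) explicitly, which is exactly the right justification.
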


\begin{proof}
Let \( C \) and \( f \) be as in Lemma~\ref{lemma:unifcontandlip}, with \( f \) uniformly continuous and \( f^{-1} \) nonexpansive. Notice that since 
\( f \) is, in particular, a homeomorphism, the set \( A' = f^{-1}(A) \) is a nonempty closed subset of \( C \). Let \( g \colon C \to A' \) be a 
nonexpansive retraction: then \( r = f \circ g \circ f^{-1} \colon (X,d) \twoheadrightarrow (A,d) \) is the desired uniformly continuous retraction.

Assume now that \( A \) has bounded diameter, and let \( C \), \( f \), and \( g \) be as in the previous paragraph. Arguing as in the proof of
 Lemma~\ref{lemma:unifcontandlip}, one can easily check that \( f \restriction A' \colon (A', \bar{d}) \to (X,d) \) is actually Lipschitz (since \( A \) has
 bounded diameter): therefore \( r = (f \restriction A') \circ g \circ f^{-1} \colon (X,d) \twoheadrightarrow (A,d) \) is the desired Lipschitz retraction.
\end{proof}

\begin{remark} 

It is not possible in general to strengthen Corollary~\ref{cor:retraction} by requiring the reduction to be nonexpansive, even if we require the entire 
\( X \) to have small diameter. To see this, let \( X = \{ 0 \} \cup \left\{ \frac{1}{2} + 2^{-(n+1)} \mid n \in \omega \right\} \), and set 
\( d(x,y) = \max \{ x,y \} \) for all distinct \( x,y \in X \). Then \( X = (X,d) \) is a discrete ultrametric Polish space of diameter \( \leq 1 \). Consider the 
clopen set \( A = X \setminus \{ 0 \} \), and let \( f \colon X \twoheadrightarrow A \) be a retraction. Let \( n \in \omega \) be such that 
\( f(0) = \frac{1}{2} + 2^{-(n+1)} \): then setting \( x = 0 \) and \( y =  \frac{1}{2} + 2^{-(n+2)} \) we get that
\[ 
d(f(x),f(y)) = d(f(x),y) =  \frac{1}{2} + 2^{-(n+1)} > \frac{1}{2} + 2^{-(n+2)} = d(x,y),
 \] 
so \( f \) is expansive.
\end{remark}


\begin{theorem} \label{th:unifcontandlip}
The \( \UCont(d) \)-hierarchy \( \Deg_{\boldsymbol{\Delta}^1_1}(\UCont(d)) \) on the Borel subsets of an arbitrary ultrametric Polish space \( X = (X,d) \) is always very good. If \( X \) has 
bounded diameter, then the \( \Lip(d) \)-hierarchy \( \Deg_{\boldsymbol{\Delta}^1_1}(\Lip(d)) \) on the Borel subsets of \( X \) is very good as well.
\end{theorem}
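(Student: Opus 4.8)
The plan is to reduce everything to the already-known Wadge-type theory on closed subsets of the Baire space, using the maps provided by Lemma~\ref{lemma:unifcontandlip} together with Lemma~\ref{lemma:retraction}. Concretely, let $X = (X,d)$ be an ultrametric Polish space, and let $C \subseteq \pre{\omega}{\omega}$ be the closed set and $f \colon (C,\bar d) \to (X,d)$ the bijection furnished by Lemma~\ref{lemma:unifcontandlip}, so that $f$ is uniformly continuous and $f^{-1}$ is nonexpansive (hence in particular uniformly continuous). Since $f$ is a bijection with $f^{-1} \in \UCont$, the pair $(f, f^{-1})$ exhibits $f$ as a surjective element of $\UCont$ from $(C,\bar d)$ onto $(X,d)$ admitting a (uniformly continuous, in fact nonexpansive) right inverse, and symmetrically $f^{-1}$ is a surjection from $(X,d)$ onto $(C,\bar d)$ with right inverse $f$. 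By Lemma~\ref{lemma:retraction} (applied with $\F$ the family of uniformly continuous functions between ultrametric Polish spaces, which clearly is a family of reducibilities and consists of Borel functions), the $\UCont(\bar d_C)$-hierarchy on Borel subsets of $C$ and the $\UCont(d)$-hierarchy on Borel subsets of $X$ are bi-embeddable, hence one is (very) good iff the other is.

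So it remains to see that the $\UCont(\bar d_C)$-hierarchy on Borel subsets of $C$ is very good. Here $C$ is a closed subset of the Baire space with its canonical metric, so $\W(C) \supseteq \L(\bar d_C)$ and, more to the point, every uniformly continuous self-map of $C$ is in particular continuous; thus $\UCont(\bar d_C) \subseteq \W(C)$. The point is that the \emph{reverse} comparison is what we need for goodness to transfer \emph{down} from $\W$: but in fact the hierarchy we want is for $\UCont$, not for $\W$, so I should instead argue that $\UCont(\bar d_C)$ already contains enough functions. The cleanest route: every closed $C \subseteq \pre{\omega}{\omega}$ is a $\L(\bar d_C)$-retract of $\pre{\omega}{\omega}$ by \cite[Proposition 2.8]{Kechris:1995} (recalled before Corollary~\ref{cor:retraction}), and nonexpansive functions are uniformly continuous, so by Lemma~\ref{lemma:retraction} applied to the retraction $\pre{\omega}{\omega} \twoheadrightarrow C$ and the inclusion $C \hookrightarrow \pre{\omega}{\omega}$, the $\UCont(\bar d_C)$-hierarchy on Borel subsets of $C$ embeds into — and conversely receives an embedding from — the $\UCont(\bar d_{\pre{\omega}{\omega}})$-hierarchy on Borel subsets of $\pre{\omega}{\omega}$, provided the relevant maps lie in $\UCont$ on both sides. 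Since the inclusion and the retraction are both nonexpansive, hence uniformly continuous, this works, and reduces us to: the $\UCont(\bar d)$-hierarchy on Borel subsets of $\pre{\omega}{\omega}$ is very good. But this is one of the classical results recalled in the introduction (the uniformly continuous reducibility on the Baire space, like Lipschitz and Wadge, yields a semi-well-ordered hierarchy on Borel sets under $\ZF + \DC(\RR)$), so we are done with the first assertion.

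For the second assertion, assume $X$ has bounded diameter. Then Lemma~\ref{lemma:unifcontandlip} gives $f$ that is moreover Lipschitz, with $f^{-1}$ still nonexpansive (hence Lipschitz with constant $1$). Now repeat the argument verbatim with $\F$ the family of Lipschitz functions: $(f,f^{-1})$ and $(f^{-1},f)$ witness the hypotheses of Lemma~\ref{lemma:retraction} for $\F = \Lip$, so the $\Lip(\bar d_C)$-hierarchy on Borel subsets of $C$ and the $\Lip(d)$-hierarchy on Borel subsets of $X$ are bi-embeddable. And the $\Lip(\bar d_C)$-hierarchy on Borel subsets of a closed $C \subseteq \pre{\omega}{\omega}$ is very good — this is exactly the strengthening of Proposition~\ref{prop:continuous} recalled in the introduction via \cite[Theorem 3.1]{MottoRos:2009}, which gives very-goodness of $\Deg_{\boldsymbol{\Delta}^1_1}(\F)$ for any $\F$ containing $\L(\bar d_C)$, in particular for $\F = \Lip(\bar d_C)$. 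Hence $\Deg_{\boldsymbol{\Delta}^1_1}(\Lip(d))$ is very good.

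The main obstacle is bookkeeping about \emph{which} direction each of the two embeddings from Lemma~\ref{lemma:retraction} goes and making sure the maps involved genuinely lie in the relevant function class $\F$ (uniformly continuous resp.\ Lipschitz) in the correct ambient spaces — in particular one must check that the right inverses, not just the surjections, are in $\F$, and that in the bounded-diameter case every map in sight (the Lipschitz bijection $f$, its nonexpansive inverse, and the nonexpansive Baire-space retraction) is Lipschitz, which it is. A secondary point to state carefully is that $\UCont$ and $\Lip$ are families of reducibilities on $\mathscr X$ consisting of Borel functions, so that the ``(very) good'' transfer clause of Lemma~\ref{lemma:retraction} applies; this is immediate since Lipschitz and uniformly continuous functions are continuous, hence Borel. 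Everything else is an invocation of the classical Wadge-theoretic facts recalled in the introduction, so no genuinely new combinatorics is needed.
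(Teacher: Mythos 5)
Your proof is correct and uses exactly the same ingredients as the paper's (Lemma~\ref{lemma:unifcontandlip}, the nonexpansive retraction of \( \pre{\omega}{\omega} \) onto \( C \), Lemma~\ref{lemma:retraction}, and the known very-goodness results on the Baire space). The only difference is bookkeeping: the paper composes once, taking \( f \circ g \colon \pre{\omega}{\omega} \twoheadrightarrow X \) with right inverse \( f^{-1} \) and applying Lemma~\ref{lemma:retraction} a single time, whereas you apply it twice (relating \( X \) to \( C \) and then \( C \) to \( \pre{\omega}{\omega} \)) and, in the Lipschitz case, invoke the strengthened form of Proposition~\ref{prop:continuous} on the closed set \( C \) directly rather than transferring again to the full Baire space; these are equivalent routes.
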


\begin{proof}
Let \( C \subseteq \pre{\omega}{\omega} \) and \( f \colon C  \to X \) be as in Lemma~\ref{lemma:unifcontandlip}, and let 
\( g \colon  (\pre{\omega}{\omega}, \bar{d}) \to (C, \bar{d}) \) be a nonexpansive retraction. Then \( f^{-1} \) is a right inverse of \( g \circ f \), 
and hence the result follows from Lemma~\ref{lemma:retraction} and the fact that both the \( \UCont(\bar{d}) \)-hierarchy and the 
\( \Lip(\bar{d}) \)-hierarchy are very good by~\cite{MottoRos:2010}.
\end{proof}

In particular, this fully answers in the negative the second half of~\cite[Question 6.2]{MottoRos:2012c}. Moreover, 
Theorem~\ref{th:unifcontandlip} provides also a negative answer to~\cite[Question 6.1]{MottoRos:2012c}: letting \( \phi \) be as in
 Proposition~\ref{prop:UCont}, we get that the set \( \UCont(d_\phi) \) of uniformly continuous functions is a surjective image of \( \pre{\omega}{\omega} \),\footnote{When working in models of \( \AD \) (as it is often the case when dealing with Wadge-like hierarchies), for technical reasons it is often preferable to express ``cardinality inequality'' using surjections instead of injections. Therefore the stated property should be intended (in any model of \( \ZF \)) as: the cardinality of \( \UCont(d_\phi) \) is not larger than that of the Baire space. Obviously, further assuming the Axiom of Choice \( \mathsf{AC} \) this just means that \( \UCont(d_\phi) \) has cardinality \( \leq 2^{\aleph_0} \).} it does not contain \( \L(\bar{d}) \),
 but it induces a very good hierarchy on the Borel subsets (or, further assuming \( \AD \), on the collection of all subsets) of 
\( \pre{\omega}{\omega} \).

Theorem~\ref{th:unifcontandlip} shows that having a bounded diameter is a sufficient condition for having that the \( \Lip(d) \)-hierarchy on the 
Borel subsets of an ultrametric Polish space \( X = (X,d) \) is very good. In fact, we are now going to show that a technical strengthening of this
 condition is both necessary and sufficient for that.

\begin{definition} \label{def:nontriviallyunbounded}
Let \( X = (X,d) \) be an (ultra)metric Polish space. We say that the diameter of \( X \) is \emph{nontrivially unbounded} if for every 
\( k \in \omega \) and every \( \varepsilon \in \RR^+ \) there are \( x,y \in X \) with \( d(x,y) > k \) such that  both \( x \) and \( y \) are not 
\( \varepsilon \)-isolated.%
\footnote{Recall that a point \( x \) of a metric space is called \emph{\( \varepsilon \)-isolated} (for some \( \varepsilon \in \RR^+ \)) if \( B_d(x,\varepsilon) = \{ x \} \).}
\end{definition}

Notice that if \( X \) is \emph{perfect}, then the diameter of \( X \) is nontrivially unbounded if and only if it is unbounded.


\begin{example} \label{xmp:p-adic}
Let \( p \) be a prime natural number, and let \( \mathbb{Q}_p \) be the ultrametric Polish space of \( p \)-adic numbers equipped with the usual \( p \)-adic metric \( d_p \): then \( \mathbb{Q}_p \) has unbounded diameter and is perfect (hence its diameter is nontrivially unbounded). To see the former,  given \(  k \in \omega \)  let \( n \in \omega \) be such that \( n \geq 2 \) and  \( k < p^n \): setting \( x = p^{-1} \) and \( y = p^{-n} \) we easily get \( d_p(x,y) = p^n > k \). To see that \( \mathbb{Q}_p \) is also perfect, fix an arbitrary \( q \in \mathbb{Q} \), and given \( \varepsilon \in \RR^+ \) let \( l \in \omega \) be such that \( p^{-l} < \varepsilon \): then \( q' = q - p^l \) is distinct from \( q \) and \( d_p(q,q') = p^{-l} < \varepsilon \). This shows that \( q \) is not isolated, and since \( \mathbb{Q} \) is dense in \( \mathbb{Q}_p \) we are done.
\end{example}

\begin{notation}
We let \( \subseteq^* \) denote the relation of inclusion modulo finite sets between subsets of \( \omega \), i.e.\ for every 
\( a,b \subseteq \omega \) we set
\[ 
a \subseteq^* b \iff \exists \bar{k} \in \omega \, \forall k \geq \bar{k} \, (k \in a \Rightarrow k \in b).
 \] 
\end{notation}

\begin{theorem} \label{th:unboundeddiam} 
Let \( X = (X,d) \) be an ultrametric Polish space, and assume that its diameter is nontrivially unbounded. Then there is a map \( \psi \) from 
\( \pow(\omega) \) into the clopen subsets of \( X \) such that for all \(a,b \subseteq \omega \):
\begin{enumerate}
\item
if \( a \subseteq^* b \) then \( \psi(a) \leq_{\L(d)} \psi(b) \);
\item
if \( \psi(a) \leq_{\Lip(d)} \psi(b) \) then \( a \subseteq^* b \).
\end{enumerate}
In particular, \( (\pow(\omega), \subseteq^* ) \) embeds into both \( \Deg_{\boldsymbol{\Delta}^0_1}(\Lip(d)) \) and \( \Deg_{\boldsymbol{\Delta}^0_1}(\L(d)) \).
\end{theorem}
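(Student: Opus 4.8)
The plan is to copy $(\pow(\omega),\subseteq^*)$ into $X$ by first extracting a ``skeleton'' from the hypothesis and then coding subsets of $\omega$ by clopen sets attached to it.

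\emph{Step 1: the skeleton.} Using that the diameter of $X$ is nontrivially unbounded, I would recursively build points $p_n,p_n'\in X$ and reals $D_n,\rho_n>0$ ($n\in\omega$) with $d(p_n,p_m)=D_{\max\{n,m\}}$ for $n\neq m$, $d(p_n,p_n')=\rho_n$, the sequence $(D_n)$ strictly increasing with $D_n/D_{n-1}\to\infty$, and $(\rho_n)$ strictly decreasing with $\rho_{n-1}/\rho_n\to\infty$. At stage $n$ one fixes a huge $k'_n$ (say $k'_n>n\cdot\max_{j<n}D_j$) and a tiny $\varepsilon_n$ (say $\varepsilon_n<\rho_{n-1}/n$), applies the hypothesis to obtain $x,y$ with $d(x,y)>k'_n$ and neither $\varepsilon_n$-isolated, and observes by the isosceles rule that one of $x,y$ — call it $p_n$ — satisfies $d(p_n,p_0)>k'_n\geq\max_{j<n}d(p_0,p_j)$, whence, again by the isosceles rule, $d(p_n,p_j)=d(p_n,p_0)=:D_n$ for all $j<n$; then $p_n'$ is a witness to $p_n$ not being $\varepsilon_n$-isolated and $\rho_n:=d(p_n,p_n')<\varepsilon_n$. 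Put $B_n:=B_d(p_n,\varepsilon_n)$: these are pairwise disjoint clopen balls, each containing $p_n,p_n'$ and the proper clopen subball $B_d(p_n,\rho_n)$, and $\bigcup_nB_n$ is clopen since a Cauchy sequence in it (the $B_n$ being pairwise at distances $\to\infty$) must eventually lie in a single $B_n$.

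\emph{Step 2: the coding, and proof of (1).} I would take $\psi(a)$ to be a clopen set built from gadgets living in the $B_n$'s, the gadget in $B_n$ chosen according to whether $n\in a$ (so that $\psi(a)$ is clopen because $\bigcup_nB_n$ is). The subtle point — and what I expect to be the main obstacle — is to design the coding so that (1) holds: since $a\equiv^* b$ must force $\psi(a)\equiv_{\L(d)}\psi(b)$, the $\L(d)$-degree of $\psi(a)$ has to depend only on the tail of $a$, even though the per-block scales $\rho_n$ are pairwise distinct (and a nonexpansive function cannot reduce a ``finer'' clopen set to a ``coarser'' one); reconciling this robustness with the scale-rigidity needed in Step 3 is the heart of the argument. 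Granting a suitable coding, for $a\subseteq^* b$ with $a\setminus b\subseteq N$ one builds a nonexpansive reduction acting as the identity (or a block swap) on each $B_n$ with $n\geq N$ — where $a$ and $b$ behave the same — and absorbs the finitely many remaining blocks without destroying global nonexpansiveness, using the way the coding was set up.

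\emph{Step 3: proof of (2).} Suppose $f$ is Lipschitz with constant $L$ and $\psi(a)=f^{-1}(\psi(b))$. Being $L$-Lipschitz, $f$ maps $B_n$ into a set of diameter $\leq L\varepsilon_n$, hence for $n$ large into a single $B_m$ or into the junk $X\setminus\bigcup_mB_m$; the reduction property excludes the junk, so $f(B_n)\subseteq B_{\sigma(n)}$ for some $\sigma(n)$. When $n\in a$ the scale-$\rho_n$ straddle of $\psi(a)$ near $p_n$ is carried to a straddle of $\psi(b)$ of scale $\leq L\rho_n$ near $f(p_n)$, forcing $\rho_{\sigma(n)}\leq L\rho_n$; since $\rho_{n-1}/\rho_n\to\infty$ this gives $\sigma(n)\geq n$ for all large $n\in a$ (and shows $\sigma$ is finite-to-one). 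Fixing $n_0\in a$ and using that $f(p_n),f(p_{n_0})$ lie in tiny, far-apart balls, $D_{\max\{\sigma(n),\sigma(n_0)\}}=d(f(p_n),f(p_{n_0}))\leq L\cdot d(p_n,p_{n_0})=L\cdot D_{\max\{n,n_0\}}$, and since $D_k/D_{k-1}\to\infty$ this forces $\sigma(n)\leq n$ for all large $n\in a$. Hence $\sigma(n)=n$ eventually on $a$, and then $f(p_n)\in\psi(b)\cap B_n$ forces $n\in b$; thus $a\setminus b$ is finite, i.e.\ $a\subseteq^* b$.

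\emph{Step 4: conclusion.} Since $\L(d)\subseteq\Lip(d)$, combining (1) and (2) gives $a\subseteq^*b\iff\psi(a)\leq_{\L(d)}\psi(b)\iff\psi(a)\leq_{\Lip(d)}\psi(b)$ for all $a,b$, so $[a]\mapsto[\psi(a)]$ is a well-defined order embedding of $(\pow(\omega),\subseteq^*)$ into each of $\Deg_{\boldsymbol{\Delta}^0_1}(\L(d))$ and $\Deg_{\boldsymbol{\Delta}^0_1}(\Lip(d))$, as required.
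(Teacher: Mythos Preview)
Your skeleton in Step~1 and the strategy of Steps~3--4 are essentially the paper's. The genuine gap is Step~2, which you yourself flag as ``the main obstacle'' and then do not resolve: you neither define the gadgets nor verify a reduction, and the sketch you give is in fact wrong. You say the reduction acts as ``the identity \dots on each $B_n$ with $n\geq N$ --- where $a$ and $b$ behave the same'', but $a\subseteq^* b$ only makes $a\setminus b$ finite, not $a\mathbin{\triangle} b$; for the possibly infinitely many $n\in b\setminus a$ with $n\geq N$ the gadgets differ and the identity is not a reduction on that block. How to ``absorb the finitely many remaining blocks'' nonexpansively is exactly the problem, and writing ``granting a suitable coding'' is a restatement of it, not a solution.

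The paper's fix has two concrete ingredients. First, the coding is simply $\psi(a)=\bigcup_{i\in\hat a}B_d(p_i,\rho_i)$, where $\hat a=\{2i:i\in\omega\}\cup\{2i+1:i\in a\}$; the point of $\hat a$ is that it is always infinite, so (since $a\subseteq^* b$ gives $\hat a\subseteq^*\hat b$) one can pick $\bar k\in\hat a$ beyond which $\hat a\subseteq\hat b$, and then automatically $\bar k\in\hat b$ too. Second, the nonexpansive reduction of $\psi(a)$ to $\psi(b)$ is: send the entire large ball $B_d(p_0,D_{\bar k})$ (which swallows all $p_i$ with $i<\bar k$) to the two-point set $\{p_{\bar k},p'_{\bar k}\}$, mapping points of $\psi(a)$ to $p_{\bar k}\in\psi(b)$ and the rest to $p'_{\bar k}\notin\psi(b)$; for each $i\geq\bar k$ with $i\notin\hat a$, send $B_d(p_i,\rho_i)$ constantly to the nearby point $p'_i$ (which lies outside every $B_d(p_j,\rho_j)$, hence outside $\psi(b)$ regardless of whether $i\in\hat b$); and use the identity elsewhere. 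That this map is nonexpansive is a three-case check via the isosceles rule, using precisely the growth conditions on $(D_n)$ and $(\rho_n)$ set up in Step~1. This is the missing content of your Step~2.
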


\begin{proof}
Let \( (q_n)_{n \in \omega} \) be an enumeration of a countable dense subset \( Q \) of \( X \).
We first recursively construct two sequences \( (r_n)_{n \in \omega} \), \( (s_n)_{n \in \omega} \) of nonnegative reals and two sequences 
\( (x_n)_{n \in \omega} \), \( (y_n)_{n \in \omega} \) of points of \( X \) such that for all distinct \( n,m \in \omega \) the following properties hold:
\begin{enumerate}[(a)]
\item
 \( d(x_n,x_m) = r_{\max \{ n,m \}} \) and
\( d(x_n,y_n) = s_n \);
\item
\( r_{n+1} > \max \{ n+1, r_n^2 \} \) (in particular, \( (r_n)_{n \in \omega} \) is strictly increasing and unbounded in \( \RR^+ \));
\item
\( s_0 < 1 \) and \( s_{n+1} < \frac{s_n}{r_n+1} \) (in particular, \( (s_n)_{n \in \omega} \) is a strictly decreasing sequence).
\end{enumerate}

\begin{claim} \label{claim:nonisolated}
If \( x \in X \) is not \( \varepsilon \)-isolated then there are at least two distinct \( q_i,q_j \in Q \) such that \( q_i,q_j \in B_d(x,\varepsilon) \).
\end{claim}

\begin{proof}[Proof of the Claim]
Since \( x \) is not \( \varepsilon \)-isolated, there is \( y \in B_d(x,\varepsilon) \) such that \( x \neq y \). By density of \( Q \), there are 
\( q_i,q_j \in Q \) such that \( q_i \in B_d(x,d(x,y)) \) and \( q_j \in B_d(y,d(x,y)) \). Then \( q_i \neq q_j \) since 
\( B_d(x,d(x,y)) \cap B_d(y,d(x,y)) = \emptyset \), while \( q_i,q_j \in B_d(x,\varepsilon) \) because 
\( B_d(x,d(x,y)), B_d(y,d(x,y)) \subseteq B_d(x,\varepsilon) \) by \( d(x,y) < \varepsilon \).
\end{proof}

Let \( x\in X \) be not \( 1 \)-isolated (such an \( x \) exists because the diameter of \( X \) is nontrivially unbounded), and let \( q_i,q_j \) be as in
 Claim~\ref{claim:nonisolated} for \( \varepsilon=1 \). Then we set \( x_0 = q_i \), \( y_0 = q_j \), \( r_0 = 0 \), and \( s_0 = d(q_i,q_j) \). Now assume that \( x_n \), \( y_n \), 
\( r_n \), and \( s_n \) have been defined. Let \( x,y \in X \) be such that \( d(x,y) > \max \{ n+1,r^2_n \} \) and \( x,y \) are not 
\( \frac{s_n}{r_n+1} \)-isolated. Then at least one of \( x \) and \( y  \) has distance greater than \(  \max \{ n+1,r^2_n \} \) from \( x_n \) (and hence
 also from all the \( x_m \) for \( m \leq n \)): if not, then we would have \( d(x,y) \leq \max \{ d(x,x_n),d(y,x_n) \} \leq  \max \{ n+1,r^2_n \} \),
 contradicting our choice of \( x,y \). So we may assume without loss of generality that \( d(x,x_n) > \max \{ n+1,r^2_n \} \) and \( x \) is not 
\( \frac{s_n}{r_n+1} \)-isolated. Let \( q_i,q_j \) be as in Claim~\ref{claim:nonisolated} for \( \varepsilon=\frac{s_n}{r_n+1} \), and set \( x_{n+1} = q_i \), \( y_{n+1} = q_j \), 
\( r_{n+1} = d(q_i,x_n) \), and \( s_{n+1} = d(q_i,q_j) \). Since \( d(q_i,x) < \frac{s_n}{r_n+1} \leq 1 \leq \max \{ n+1,r^2_n \} \), we have 
\( r_{n+1} = d(q_i, x_n) = d(x,x_n)  > \max \{ n+1,r^2_n \} \). Moreover, \( s_{n+1} < \frac{s_n}{r_n+1} \) by the fact that 
\( q_i,q_j \in B_d(x,\frac{s_n}{r_n+1}) \). Arguing by induction on \( n \in \omega \), it is then easy to check that the sequences constructed in this 
way have all the desired properties.

Given \( a \subseteq \omega \), let \( \hat{a} = \{ 2i \mid i \in \omega \} \cup \{ 2i+1 \mid i \in a \} \), so that \( \hat{a} \) is always infinite 
and for every \( a,b \subseteq \omega \)
\[ 
a \subseteq^* b \iff \hat{a} \subseteq^* \hat{b}.
 \] 
For \( a \subseteq \omega \), set \( \psi(a) = \bigcup_{i \in \hat{a}} B_d(x_i,s_i) \). Clearly, each \( \psi(a) \) is an open subset of \( X \). To see that it is also
 closed, observe that \( B_d(x_i,s_i) \subseteq B_d(x_i,1) \) for every \( i \in \omega \) by our choice of the \( s_i \)'s, and that for distinct 
\( i,j \in \omega \) the clopen balls \( B_d(x_i,1) \) and \( B_d(x_j,1) \) are disjoint by  our choice of the \( x_i \)'s and of the \( r_i \)'s: therefore, 
since the open balls in \( X \) are automatically closed we get that 
\begin{align*} 
X \setminus \psi(a) =& \bigcup \left\{ B_d(z,1) \mid z \notin \bigcup\nolimits_{i \in \hat{a}} B_d(x_i,1) \right\} \\ 
&\cup \bigcup \{ B_d(x_i,1) \setminus B_d(x_i,s_i) \mid i \in \hat{a} \}
\end{align*} 
is open.

Let now \( a ,b \subseteq \omega \) be such that \( a \subseteq^* b \), which in particular implies \( \hat{a} \subseteq^* \hat{b} \), and let 
\( 0 \neq \bar{k} \in \omega \) be such that \( \bar{k} \in \hat{a} \) and \( k \in \hat{a} \Rightarrow k \in \hat{b} \) for every \( k \geq \bar{k} \). Define 
\( f \colon (X,d) \to (X,d) \) as follows:
\[ 
f(x) = 
\begin{cases}
x_{\bar{k}} & \text{if } x \in B_d(x_i,s_i), i < \bar{k}, i \in \hat{a} \\
y_{\bar{k}} & \text{if } x \in B_d(x_0, r_{\bar{k}}) \setminus \bigcup \{ B_d(x_i,s_i) \mid i < \bar{k}, i \in \hat{a} \} \\
y_i & \text{if } x \in B_d(x_i,s_i), i \geq \bar{k}, i \notin \hat{a} \\
x & \text{otherwise.}
\end{cases}%
 \] 
It is straightforward to check that \( f \) reduces \( \psi(a) \) to \( \psi(b) \), so we only need to check that \( f \) is  nonexpansive, and this amounts to
 check that if \( x,y  \) are distinct points of \( X \) which fall in different cases in the definition of \( f \), then \( d(f(x),f(y)) \leq d(x,y) \). A careful
 inspection shows that the unique nontrivial cases are the following:
\begin{enumerate}[{case} A:]
\item
\( x \in B_d(x_0,r_{\bar{k}}) \), while \( y \notin B_d(x_0,r_{\bar{k}})  \cup \bigcup \{ B_d(x_i,s_i) \mid i \geq \bar{k}, i \notin \hat{a} \} \).  Then 
\( d(x,y) \geq r_{\bar{k}} \) (by case assumption) and \( d(x,f(x)) = r_{\bar{k}} \) (because 
either \( f(x) = x_{\bar{k}} \) or \( f(x) = y_{\bar{k}} \), depending on whether \( x \in B_d(x_i,s_i) \) for some \( i \in \hat{a} \) smaller than 
\( \bar{k} \) or not). 
Since in the case under consideration \( f(y) = y \), we get that either \( d(f(x),f(y)) \leq r_{\bar{k}} \), or else \( d(f(x),f(y)) = d(f(x),y) = d(x,y) \) 
by the isosceles triangle rule: in both cases, \( d(f(x),f(y)) \leq d(x,y) \) as required.
\item
\( x \in B_d(x_0, r_{\bar{k}}) \setminus \bigcup \{ B_d(x_i,s_i) \mid i < \bar{k}, i \in \hat{a} \} \), while \( y \in B_d(x_i,s_i) \) for some 
\( i \geq \bar{k} \), \( i \notin \hat{a} \). Then since \( d(x,x_0) < r_{\bar{k}} \) and \( d(x_0,y) = r_i \geq r_{\bar{k}} \), we get \( d(x,y) = r_i \). 
Since by case assumption \( f(x) = y_{\bar{k}} \) and \( f(y) = y_i \), either \( f(x) = f(y) \) (in case \( i = \bar{k} \)) or \( d(f(x),f(y)) = r_i \), and 
hence we again get \( d(f(x),f(y)) \leq d(x,y) \), as required.
\item
\( x \in B_d(x_i,s_i) \) for some \( i \geq \bar{k} \), \( i \notin \hat{a} \), while also \( y \notin B_d(x_0,r_{\bar{k}})  \cup\) \( \bigcup \{ B_d(x_i,s_i) \mid i \geq \bar{k}, i \notin \hat{a} \} \). Then \( d(x,y) \geq s_i \), \( d(x,f(x)) = s_i \) (because \( f(x) = y_i \)), and \( f(y) = y \): this implies that either \( d(f(x),f(y))  \leq s_i \) or \( d(f(x),f(y)) = d(f(x),y) = d(x,y) \), so that in any case \( d(f(x),\) \(f(y)) \leq d(x,y) \).
\end{enumerate}
This concludes the proof of part (1).

\medskip


We now prove part (2) of the theorem. Given \(a,b \subseteq \omega \), assume that \( f \colon (X,d) \to (X,d) \) is a \( \Lip(d) \)-reduction of \( \psi(a) \) to \( \psi(b) \), and let \( 0 \neq n \in \omega \) be such that \( d(f(x),f(y)) \leq r_n \cdot d(x,y) \) for every \( x,y \in X \) (such an \( n \) exists because \( (r_n)_{n \in \omega} \) is unbounded in \( \RR^+ \) by (b) above). 
Notice that, necessarily, 
\[ 
f \left(\bigcup \{ B_d(x_i,s_i) \mid i \in \hat{a} \}\right) = f(\psi(a)) \subseteq \psi(b) \subseteq \bigcup_{j \in \omega } B_d(x_j,s_j). 
\] 
We now argue as in the proof of~\cite[Theorem 5.4]{MottoRos:2012c}. 

\begin{claim} \label{claim:Lip1}
Fix an arbitrary \( i \in \hat{a} \). If there are \( x \in B_d(x_i,s_i) \) and \( j \geq n \) such that \( f(x) \in B_d(x_j,s_j) \), then \( f(B_d(x_i,s_i)) \subseteq B_d(x_j,s_j) \).
\end{claim}

\begin{proof}[Proof of the Claim]
Suppose not, and let \( y \in B_d(x_i,s_i) \) and \( j' \neq j \) be such that \( f(y) \in B_d(x_{j'},s_{j'}) \). Then
\[ 
d(f(x),f(y)) = \max \{ r_j,r_{j'} \} \geq r_j \geq r_n \cdot 1 > r_n \cdot s_i > r_n \cdot d(x,y),
 \] 
contradicting the choice of \( n \).
\end{proof}

\begin{claim}\label{claim:Lip2}
For every \( i \in \hat{a} \) such that \( i > n \), \( f(B_d(x_i,s_i)) \subseteq B_d(x_j,s_j) \) for some \( j \geq i \).
\end{claim}

\begin{proof}
Suppose towards a contradiction that there are \( x \in B_d(x_i,s_i) \) and \( j < i \) such that \( f(x) \in B_d(x_j,s_j) \), so that, in particular, \( j \in \hat{b} \) 
because \( x \in \psi(a) \) and \( f \) reduces \( \psi(a) \) to \( \psi(b) \). Then since \( d(x,y_i) = s_i \), by our choice of the \( s_i \)'s we get
\[ 
d(f(x),f(y_i)) \leq r_n \cdot s_i \leq r_{i-1} \cdot s_i < s_{i-1} \leq s_j, 
\]
and hence \( f(y_i) \in B_d(f(x),s_j) =  B_d(x_j,s_j) \subseteq \psi(b) \): but this contradicts the fact that \( f \) is a reduction of \( \psi(a) \) to \( \psi(b) \),
because \( y_i \notin \psi(a) \) while \( B_d(x_j,s_j) \subseteq \psi(b) \) since \( j \in \hat{b} \).  Thus, given an arbitrary \( x \in B_d(x_i,s_i) \) there is \( j \geq i > n\) such that \( f(x) \in B_d(x_j,s_j) \): by Claim~\ref{claim:Lip1}, we then get \( f(B_d(x_i,s_i)) \subseteq B_d(x_j,s_j) \), as required.
\end{proof}

Let now \( \bar{\imath} \) be the smallest element of \( \hat{a} \). By Claim~\ref{claim:Lip1}, either \( f(B_d(x_{\bar{\imath}},s_{\bar{\imath}})) \subseteq \bigcup_{j < n } B_d(x_j,s_j) \), or \( f(B_d(x_{\bar{\imath}}s_{\bar{\imath}})) \subseteq B_d(x_j,s_j) \) for some \( j \geq n \). Therefore, in both cases there is \( \bar{k} > \max \{ n, \bar{\imath} \} \) such that \( f(B_d(x_{\bar{\imath}},s_{\bar{\imath}})) \subseteq \bigcup_{j \leq \bar{k}} B_d(x_j,s_j) \): we claim that \( k \in \hat{a} \Rightarrow k \in \hat{b} \) for every \( k \geq \bar{k} \), which also implies \( a \subseteq^* b \).

Fix \( k \geq \bar{k} \) such that \( k \in \hat{a} \). By Claim~\ref{claim:Lip2} and \( \bar{k} > n \), there is \( j \geq k \) such that \( f(B_d(x_k,s_k)) \subseteq B_d(x_j,s_j) \). Assume towards a contradiction that \( j > k \):
then
\[ 
d(f(x_{\bar{\imath}}),f(x_k)) = r_j > r_k \cdot r_k > r_n \cdot r_k = r_n \cdot d(x_{\bar{\imath}},x_k),
 \] 
contradicting the choice of \( n \). Therefore \( f(B_d(x_k,s_k)) \subseteq B_d(x_k,s_k) \), which in particular implies that \( \psi(b) \cap B_d(x_k,s_k) \neq \emptyset \) (since \( x_k \in \psi(a) \) and \( f \) reduces \( \psi(a) \) to \( \psi(b) \)): but this means that \( k \in \hat{b} \), and hence we are done.
\end{proof}

Applying Theorem~\ref{th:unboundeddiam} to the space \( \mathbb{Q}_p \) of \( p \)-adic numbers (which is possible by Example~\ref{xmp:p-adic}) we get the following corollary.

\begin{corollary}\label{cor:p-adic}
Let \( p \) be a prime natural number, and let \( d_p \) be the \( p \)-adic metric on the space \( \mathbb{Q}_p \). Then both the \( \Lip(d_p) \)- and the \( \L(d_p) \)-hierarchies are very bad already when restricted to clopen subsets of \( \mathbb{Q}_p \).
\end{corollary}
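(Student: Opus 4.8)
The plan is to read the statement off directly from Theorem~\ref{th:unboundeddiam} and Example~\ref{xmp:p-adic}. By Example~\ref{xmp:p-adic}, the space \( (\mathbb{Q}_p, d_p) \) is perfect and has unbounded diameter, so by the remark following Definition~\ref{def:nontriviallyunbounded} its diameter is nontrivially unbounded. Hence Theorem~\ref{th:unboundeddiam} applies and produces a map \( \psi \) from \( \pow(\omega) \) into the clopen subsets of \( \mathbb{Q}_p \) such that \( a \subseteq^* b \) implies \( \psi(a) \leq_{\L(d_p)} \psi(b) \), while \( \psi(a) \leq_{\Lip(d_p)} \psi(b) \) implies \( a \subseteq^* b \). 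Since every nonexpansive function is Lipschitz (with constant \( 1 \)), so that \( \L(d_p) \subseteq \Lip(d_p) \), these two clauses combine to give, for all \( a,b \subseteq \omega \), the equivalences \( a \subseteq^* b \iff \psi(a) \leq_{\L(d_p)} \psi(b) \iff \psi(a) \leq_{\Lip(d_p)} \psi(b) \); thus \( a \mapsto [\psi(a)] \) embeds \( (\pow(\omega), \subseteq^*) \) into both \( \Deg_{\boldsymbol{\Delta}^0_1}(\L(d_p)) \) and \( \Deg_{\boldsymbol{\Delta}^0_1}(\Lip(d_p)) \), as already recorded in the theorem.

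It then remains only to observe that \( (\pow(\omega), \subseteq^*) \) is itself ``very bad'' in the required sense and to transport this across the embedding. For an infinite antichain I would take an infinite almost disjoint family \( \{ A_i \mid i \in \omega \} \) of infinite subsets of \( \omega \): for distinct \( i,j \) the difference \( A_i \setminus A_j \) is infinite, hence \( A_i \not\subseteq^* A_j \). For an infinite strictly \( \subseteq^* \)-descending chain, fix a partition \( \omega = \bigsqcup_{k \in \omega} P_k \) into infinitely many infinite pieces and set \( B_n = \bigcup_{k \geq n} P_k \); then \( B_{n+1} \subseteq B_n \) (so \( B_{n+1} \subseteq^* B_n \)) while \( B_n \setminus B_{n+1} = P_n \) is infinite (so \( B_n \not\subseteq^* B_{n+1} \)). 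The image of an infinite antichain (resp.\ of an infinite descending chain) under an order embedding is again an infinite antichain (resp.\ an infinite descending chain), so both \( \Deg_{\boldsymbol{\Delta}^0_1}(\L(d_p)) \) and \( \Deg_{\boldsymbol{\Delta}^0_1}(\Lip(d_p)) \) contain infinite antichains and infinite descending chains, i.e.\ they are very bad.

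There is no genuine difficulty here: the corollary is an immediate specialization of Theorem~\ref{th:unboundeddiam}, the only minor point being the elementary verification that \( \subseteq^* \) really does carry infinite antichains and infinite descending chains. (One could of course extract much more from the embedding, e.g.\ that antichains of size continuum occur, as in remark (ii) of the introduction, but this is not needed for the statement as given.)
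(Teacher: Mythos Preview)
Your proof is correct and follows exactly the paper's approach: the paper derives the corollary in one sentence by applying Theorem~\ref{th:unboundeddiam} to \( \mathbb{Q}_p \) via Example~\ref{xmp:p-adic}. Your additional verification that \( (\pow(\omega),\subseteq^*) \) contains infinite antichains and infinite descending chains simply spells out what the paper leaves implicit (cf.\ the remark following Theorem~\ref{th:unboundeddiam} and item~(ii) of the introduction).
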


The condition on the diameter of \( X = (X,d) \) used to prove Theorem~\ref{th:unboundeddiam} is very weak: this allows us to construct extremely simple (in fact: discrete) ultrametric Polish spaces \( X = (X,d) \) with the property that their \( \Lip(d) \)- and \( \L(d) \)-hierarchies are both very bad, despite the fact that all their subsets are topologically simple (i.e.\ clopen).

\begin{corollary} \label{cor:countable}
There exists a discrete (hence countable) ultrametric Polish space \( X_0 = (X_0,d_0) \) such that \( (\pow(\omega), \subseteq^*) \) embeds into both the \( \Lip(d_0) \)- and the \( \L(d_0) \)-hierarchy on (the clopen subsets of) \( X_0 \). In particular, \( \Deg(\Lip(d_0)) = \Deg_{\boldsymbol{\Delta}^0_1}(\Lip(d_0)) \) and \( \Deg(\L(d_0)) = \Deg_{\boldsymbol{\Delta}^0_1}(\L(d_0)) \) are both very bad.
\end{corollary}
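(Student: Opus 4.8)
The plan is to build a discrete ultrametric Polish space to which Theorem~\ref{th:unboundeddiam} applies, i.e.\ one whose diameter is nontrivially unbounded, and then invoke that theorem directly. Since the space will be discrete, every subset is clopen, so $\Deg(\Lip(d_0)) = \Deg_{\boldsymbol{\Delta}^0_1}(\Lip(d_0))$ and $\Deg(\L(d_0)) = \Deg_{\boldsymbol{\Delta}^0_1}(\L(d_0))$ automatically, and the embedding of $(\pow(\omega),\subseteq^*)$ furnished by Theorem~\ref{th:unboundeddiam} will give an embedding into the (unrestricted) hierarchy. The only real content is therefore the construction of $X_0$ together with the verification that it is a discrete ultrametric Polish space whose diameter is nontrivially unbounded.

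Concretely, I would take $X_0 = \{(n,i) \mid n \in \omega,\ i \in \{0,1\}\}$ (or, if one prefers a subset of $\RR$, a countable set coding the same data), and define an ultrametric $d_0$ as follows. Choose in advance two real sequences $(r_n)_{n\in\omega}$ and $(s_n)_{n\in\omega}$ with $r_0 = 0$, $r_n \uparrow \infty$ strictly, $s_n \downarrow 0$ strictly, and $s_n < 1 < r_{n+1}$ for all $n$; e.g.\ $r_n = n$ for $n \geq 1$ and $s_n = 2^{-(n+1)}$ works, though one may want the slightly faster growth ($r_{n+1} > r_n^2$, $s_{n+1} < s_n/(r_n+1)$) used in Theorem~\ref{th:unboundeddiam} to make the application literal rather than cosmetic. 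Now set $d_0((n,i),(n,i)) = 0$, $d_0((n,0),(n,1)) = s_n$ for each $n$, and $d_0((n,i),(m,j)) = r_{\max\{n,m\}}$ whenever $n \neq m$. One checks the ultrametric inequality by the usual isosceles-triangle bookkeeping: any three distinct points with at least two distinct first coordinates have two of their pairwise distances equal to the largest $r$-value involved and the third no larger; and a ``twin pair'' $(n,0),(n,1)$ together with any third point $(m,j)$, $m\neq n$, has two distances equal to $r_{\max\{n,m\}} > s_n$. Separability is free since $X_0$ is countable; completeness holds because $X_0$ is discrete (each $(n,i)$ is isolated, as $s_n, r_{\max\{n,m\}}$ are all $\geq s_n > 0$ and $r_n \to \infty$ forces any Cauchy sequence to be eventually constant).

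Finally I would verify that the diameter of $X_0$ is nontrivially unbounded: given $k \in \omega$ and $\varepsilon \in \RR^+$, pick $n$ large enough that $r_n > k$; then $x = (n,0)$ and $y = (n',0)$ for some $n' > n$ with $r_{n'} > k$ satisfy $d_0(x,y) = r_{n'} > k$ — but here we must ensure $x,y$ are not $\varepsilon$-isolated, which fails in a discrete space! This is the point at which the naive construction breaks, and it is the main obstacle: Definition~\ref{def:nontriviallyunbounded} explicitly requires the witnesses $x,y$ to be non-$\varepsilon$-isolated, so a genuinely discrete space can never be nontrivially unbounded. The fix is to \emph{not} go through Theorem~\ref{th:unboundeddiam} as a black box but to adapt its proof: inspecting that proof, the non-isolation hypothesis is used only to find, inside each relevant ball, \emph{two} distinct dense-set points $q_i, q_j$ with which to define $x_{n+1}, y_{n+1}$ — and in our construction we can simply \emph{hand over} the pair $(n,0), (n,1)$ in place of $q_i, q_j$ at stage $n$, so the same $\psi$, the same reduction $f$ for part~(1), and the same Claims~\ref{claim:Lip1}--\ref{claim:Lip2} for part~(2) go through verbatim with $x_n := (n,0)$, $y_n := (n,1)$, $r_n, s_n$ as chosen above (this is why the faster growth conditions are worth imposing). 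Thus the clean statement of the corollary is obtained either by re-running the argument of Theorem~\ref{th:unboundeddiam} on this explicit data, or by observing that the hypothesis of that theorem is used only through the existence of such twin pairs, which our $d_0$ provides by fiat; I would write it the second way, with a one-line remark that the construction of the sequences in the proof of Theorem~\ref{th:unboundeddiam} can be replaced by the explicit choice above. The embedding of $(\pow(\omega),\subseteq^*)$ then gives antichains of size continuum and infinite descending chains (by~\cite{Parovivcenko:1963}, $(\pow(\omega),\subseteq^*)$ contains both), so both hierarchies on $X_0$ are very bad.
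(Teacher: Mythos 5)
Your construction is essentially the same as the paper's, but the diagnostic step in the middle contains a genuine error that leads you to an unnecessary detour. You claim that ``a genuinely discrete space can never be nontrivially unbounded,'' and on that basis decide you must re-open the proof of Theorem~\ref{th:unboundeddiam} and hand it your twin pairs by fiat. This claim is false. Definition~\ref{def:nontriviallyunbounded} asks only that, for every $k$ and $\varepsilon$, there be points $x,y$ with $d(x,y) > k$ that are not $\varepsilon$\emph{-isolated}; a point of a discrete space is isolated (it has some isolation radius $\delta_x > 0$), but it is not $\varepsilon$-isolated whenever $\delta_x < \varepsilon$. In your space the isolation radius of $(n,0)$ is exactly $s_n$, since its nearest neighbour is the twin $(n,1)$. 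Because $s_n \to 0$, for any fixed $\varepsilon > 0$ all but finitely many of the points $(n,0)$ are not $\varepsilon$-isolated, and taking $n$ large enough that both $r_{n+1} > k$ and $s_n, s_{n+1} < \varepsilon$ gives the required witnesses $x = (n,0)$, $y = (n+1,0)$. This is precisely the verification the paper carries out, and it lets you invoke Theorem~\ref{th:unboundeddiam} as a black box, which is what the paper does.

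Two further minor points. First, there is no need to build the faster growth conditions $r_{n+1} > r_n^2$ and $s_{n+1} < s_n/(r_n+1)$ into the metric $d_0$: those are internal to the proof of Theorem~\ref{th:unboundeddiam}, where the sequences $(r_n)$, $(s_n)$, $(x_n)$, $(y_n)$ are \emph{chosen} from $X$, not imposed on it; the hypothesis on $X$ is only nontrivial unboundedness. Your simplest choice $r_n = n$ (for $n \geq 1$) and $s_n = 2^{-(n+1)}$ therefore works fine, and in fact the paper uses $s_n = 2^{-n}$ and $r_n = n$. Second, your observation that in the proof of Theorem~\ref{th:unboundeddiam} the non-isolation hypothesis is used only to locate two close dense-set points inside each relevant ball is correct and is a nice way to see \emph{why} the hypothesis is the right one; but since the hypothesis genuinely holds here, that observation is not needed to make the proof go through.

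So the corrected version of your argument is simply: the explicit $(X_0,d_0)$ you wrote down is a discrete ultrametric Polish space (completeness because all inter-level distances are $\geq 1$, so Cauchy sequences are eventually constant), its diameter is nontrivially unbounded by the twin-pair observation above, Theorem~\ref{th:unboundeddiam} applies directly, and since every subset of a discrete space is clopen the two hierarchies coincide with their restrictions to $\boldsymbol{\Delta}^0_1$.
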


\begin{proof}
Let \( X_0 = \{ x^i_n \mid n \in \omega, i = 0,1 \} \) and set
\[ 
d_0(x^i_n,x^j_m) = 
\begin{cases}
0 & \text{if } n = m \text{ and } i = j \\
2^{-n} & \text{if } n = m \text{ and } i \neq j \\
\max \{ n,m \} & \text{if } n \neq m.
\end{cases}%
 \] 
It is easy to check that \( X_0 = (X_0,d_0) \) is a discrete ultrametric Polish space. Now observe that the diameter of \( X_0 \) is nontrivially unbounded. In fact, given \( n \in \omega \) and \( \varepsilon \in \RR^+ \), let \( k \) be minimal such that \( 2^{-k} < \varepsilon \) and \( l = \max \{ n, k \} \): then \( d_0(x^0_l,x^0_{l+1}) = l+1 > n \), and the points \( x^1_l \) and \( x^1_{l+1} \) witness that \( x^0_l \) and \( x^0_{l+1} \) are not \( \varepsilon \)-isolated.  Therefore \( X_0 \) is as desired by Theorem~\ref{th:unboundeddiam}.
\end{proof}

The next proposition extends Theorem~\ref{th:unifcontandlip} and shows that the condition on \( X \) in Theorem~\ref{th:unboundeddiam} is optimal.

\begin{theorem} \label{th:notunbounded}
Let \( X = (X,d) \) be an ultrametric Polish space whose diameter is not nontrivially unbounded. Then the \( \Lip(d) \)-hierarchy \( \Deg_{\boldsymbol{\Delta}^1_1}(\Lip(d)) \) on Borel subsets of \( X \) is very good.
\end{theorem}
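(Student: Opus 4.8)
The plan is to reduce everything to the bounded-diameter case, which is Theorem~\ref{th:unifcontandlip}. If $X$ itself has bounded diameter we are done, so assume $X$ is unbounded and fix, by hypothesis, $k \in \omega$ and $\varepsilon \in \RR^+$ such that $d(x,y) > k$ implies that $x$ or $y$ is $\varepsilon$-isolated; shrinking $\varepsilon$ and enlarging $k$ we may assume $\varepsilon \le k$. Let $I$ be the set of $\varepsilon$-isolated points of $X$ and $B = X \setminus I$. The first task is to record the structure of this decomposition: $B$ is the union of the non-singleton $\varepsilon$-balls, hence open, and it is also closed since a limit of non-$\varepsilon$-isolated points is again non-$\varepsilon$-isolated; thus both $B$ and $I$ are clopen (in particular every subset of the countable set $I$ is Borel). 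Distinct points of $I$ lie at distance $\ge \varepsilon$, and every point of $I$ lies at distance $\ge \varepsilon$ from every point of $B$; crucially, the choice of $k$ forces $\operatorname{diam}(B) \le k$, and $I \neq \emptyset$ because $X$ is unbounded while $B$ is not.

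The heart of the argument is a lemma saying that the unbounded, uniformly discrete piece $I$ contributes nothing to the hierarchy. \emph{Absorption:} for every Borel $A \subseteq X$ with $A \cap B$ a proper nonempty subset of $B$, one has $A \equiv_{\Lip(d)} A \cap B$. \emph{Restriction:} for Borel proper nonempty $A_B, A'_B \subseteq B$, one has $A_B \le_{\Lip(d)} A'_B$ (as subsets of $X$) if and only if $A_B \le_{\Lip(d_B)} A'_B$ (as subsets of $B$). The easy halves follow from a single observation: any map $X \to X$ that is Lipschitz on $B$ and sends $I$ into a bounded set is automatically Lipschitz, since for $z \in I$ and $x \ne z$ one has $d(x,z) \ge \varepsilon$, so any ``new'' distances between images are bounded below; concretely one glues a $\Lip(d_B)$-reduction on $B$ with a suitable two-valued map on $I$. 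The subtle half is converting an arbitrary $\Lip(d)$-reduction $f \colon X \to X$ of $A_B$ to $A'_B$ (which may route some points of $B$ through $I$) into a genuine $\Lip(d_B)$-reduction $g \colon B \to B$: define $g(b) = f(b)$ when $f(b) \in B$ and $g(b) = b_1$, a fixed point of $B \setminus A'_B$, when $f(b) \in I$. This still reduces $A_B$ to $A'_B$ because $A'_B \subseteq B$, and it is Lipschitz because whenever $f(b) \in B$ and $f(b') \in I$ the distance $d(f(b),f(b'))$ is $\ge \varepsilon$ (the sets $B$ and $I$ being at distance $\ge\varepsilon$), which forces $d(b,b') \ge \varepsilon / L$ for $L$ the Lipschitz constant of $f$; hence the newly created distances, which all lie inside the bounded set $B$, are controlled.

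Granting the lemma, $A \mapsto A \cap B$ induces an order- and complementation-preserving isomorphism between the part of $\Deg_{\boldsymbol{\Delta}^1_1}(\Lip(d))$ consisting of the degrees of sets whose trace on $B$ is proper nonempty and $\Deg_{\boldsymbol{\Delta}^1_1}(\Lip(d_B))$ on $B$ with its two bottom degrees $[\emptyset]$ and $[B]$ removed. The remaining $\Lip(d)$-degrees on $X$ are precisely $[\emptyset]$ and $[X]$ --- which form, as always, a two-element antichain below everything else --- together with a single \emph{selfdual} degree which contains every proper nonempty subset of $I$ as well as $B$ and $I$ themselves: all of these are mutually $\Lip(d)$-equivalent by elementary two-valued reductions, and the same reductions show this degree equals the image of the minimal (selfdual, ``clopen ball'') degree of $B$ and sits immediately above $\{[\emptyset],[X]\}$ and below all the other degrees. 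Assembling these pieces, $\Deg_{\boldsymbol{\Delta}^1_1}(\Lip(d))$ on $X$ is isomorphic, as a partial order with complementation, to $\Deg_{\boldsymbol{\Delta}^1_1}(\Lip(d_B))$ on $B$ (with the obvious degenerate modification when $B$ has at most one point, where the hierarchy on $X$ has at most three degrees); since the latter is very good by Theorem~\ref{th:unifcontandlip}, so is the former.

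The main obstacle is the ``patching'' step in the Restriction Lemma: one must rule out that a clever $\Lip(d)$-reduction could exploit the unbounded discrete part $I$ --- whose points may be arbitrarily far apart --- to manufacture complexity not already present over $B$. The mechanism that prevents this is precisely that $I$ sits at distance $\ge \varepsilon$ from $B$, so a Lipschitz map cannot route nearby points of $B$ through $I$; keeping track of the finitely many low-level degrees arising from $I$, and the degenerate cases where $B$ is empty or a singleton, is then routine bookkeeping.
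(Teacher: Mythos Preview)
Your argument is correct and takes a genuinely different route from the paper's. Both proofs isolate a bounded-diameter ``core'' of $X$ and transfer the result from Theorem~\ref{th:unifcontandlip}, but the mechanics differ. The paper picks a single non-$\varepsilon$-isolated point $x_0$, sets $X' = B_d(x_0, n+1)$, and proves only a \emph{one-directional} transfer (if $A \cap X' \leq_{\Lip(d_{X'})} B \cap X'$ then $A \leq_{\Lip(d)} B$, for $B \neq \emptyset, X$). This suffices to push $\SLO^{\Lip(d_{X'})}$ up to $\SLO^{\Lip(d)}$, but for well-foundedness the paper then invokes Ramsey's theorem: a putative infinite descending chain in $X$ yields, via the contrapositive of the transfer claim, a sequence in $X'$ with no forward reductions, which Ramsey turns into an infinite antichain or descending chain in the $\Lip(d_{X'})$-hierarchy. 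You instead take $B$ to be exactly the set of non-$\varepsilon$-isolated points, prove a \emph{two-directional} transfer (your Absorption and Restriction lemmas), and deduce a full order-isomorphism $\Deg_{\boldsymbol{\Delta}^1_1}(\Lip(d)) \cong \Deg_{\boldsymbol{\Delta}^1_1}(\Lip(d_B))$. This buys you well-foundedness for free, with no appeal to Ramsey, and gives strictly more structural information. The price is the extra work in the ``subtle half'' of Restriction (patching a reduction that routes through $I$), and the bookkeeping showing that the residual selfdual degree $[B]_{\Lip(d)}$ coincides with the minimal nontrivial degree coming from $B$ --- both of which you handle correctly, though the latter is a bit compressed in your write-up (the key point being that $B \leq_{\Lip(d)} C$ via a two-valued map for \emph{every} proper nonempty $C \subseteq B$, which forces the ball degree to be minimal in $\Lip(d_B)$).
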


\begin{proof} 
Let \( n \in \omega \) and \( \varepsilon \in \RR^+  \) be such that for every \( x,y \), if \( d(x,y) > n \) then at least one of \( x \) and \( y \) is \( \varepsilon \)-isolated. 

Let us first consider the degenerate case in which all points of \( X \) are \(\varepsilon\)-isolated. Since constant functions are always (trivially) Lipschitz, we get that the sets \( X \) and \( \emptyset \) are \( \Lip(d) \)-incomparable, and that they are both (strictly) \( \leq_{\Lip(d)} \)-below any other set \( \emptyset,X \neq A \subseteq X \). Assume now that \( B \subseteq X \) is another set which is different from both \( \emptyset \) and \( X \): we claim that then \( A \equiv_{\Lip(d)} B \). To see this, fix \( \bar{x} \in B \) and \( \bar{y} \in \neg B \), and for every \( x \in X \) set \( f(x) = \bar{x} \) if \( x \in A \) and \( f(x) = \bar{y} \) if \( x \in \neg A \). Then \( f \colon (X,d) \to (X,d) \) reduces \( A \) to \( B \). Moreover, since for all distinct \( x,y \in X \) we have \( d(x,y) \geq \varepsilon \) (because both \( x \) and \( y \) are \(\varepsilon\)-isolated), we get
\[ 
d(f(x),f(y)) \leq d(\bar{x},\bar{y}) = \frac{d(\bar{x},\bar{y})}{\varepsilon} \cdot \varepsilon \leq \frac{d(\bar{x},\bar{y})}{\varepsilon} \cdot d(x,y),
 \] 
so that \( f \) is Lipschitz with constant \( \frac{d(\bar{x},\bar{y})}{\varepsilon} \). This shows that \( A \leq_{\Lip(d)} B \). Switching the role of \( A \) and \( B \), we get that also \( B \leq_{\Lip(d)} A \), and hence we are done. Therefore we have shown that the \( \Lip(d) \)-hierarchy on \( X \) is constituted by the two \( \Lip(d) \)-incomparable degrees \( [\emptyset]_{\Lip(d)}  = \{ \emptyset \} \) and \( [ X]_{\Lip(d)} = \{ X \} \), plus a unique \( \Lip(d) \)-degree above them containing all other subsets of \( X \), and is thus (trivially) very good.

Assume now that there is a non-\(\varepsilon\)-isolated point \( x_0 \in X \), and set \( X' = B_d(x_0,\) \(n+1) \). By our choice of \( n \) and \( \varepsilon \), we get that \( d(x,y) \geq n+1 \) for every \( x \in X' \) and \( y \in X \setminus X' \), and that each \( y \in X \setminus X' \) is \( \varepsilon \)-isolated (because \( d(x_0,y) > n \) and \( x_0 \) is not \(\varepsilon\)-isolated). We first prove the following useful claim.

\begin{claim} \label{claim:transfer}
Let \( A,B \subseteq X \) be such that \( B \neq \emptyset,X \). If there is a Lipschitz reduction \( f \colon (X',d_{X'}) \to (X',d_{X'}) \) of \( A' = A \cap X' \) to \( B' = B \cap X' \), then \( A \leq_{\Lip(d)} B \).
\end{claim}

\begin{proof}
Let \( f \) be as in the hypothesis of the claim, and let \( 1 \leq k \in \omega \) be such that \( d(f(x),f(y)) \leq k \cdot d(x,y) \) for every \( x,y \in X' \). Fix \( \bar{x} \in B \) and \( \bar{y} \in \neg B \), and extend \( f \) to the map \( \hat{f} \colon (X,d) \to (X,d) \) by letting \( \hat{f}(x) = \bar{x} \) if \( x \in A \setminus X' \) and \( \hat{f}(x) = \bar{y} \) if \( x \in X \setminus (X' \cup A) \). Clearly, \( \hat{f} \) reduces \( A \) to \( B \), and we claim that \( \hat{f} \) is Lipschitz with constant \( c \), where \( c \) is
\[ 
c = \max \left \{ k , \frac{d(\bar{x},\bar{y})}{\varepsilon}, \frac{d(x_0,\bar{x})}{n+1}, \frac{d(x_0,\bar{y})}{n+1} \right\}.
\]
Fix arbitrary \( x,y \in X \). If \( x,y \in X' \), then 
\[ 
d(\hat{f}(x),\hat{f}(y))  = d(f(x),f(y)) \leq k \cdot d(x,y) \leq c \cdot d(x,y) 
\] 
by our choice of \( k \in \omega \). If \( x,y \in X \setminus X' \), then \( d(x,y) \geq \varepsilon \) because both \( x \) and \( y \) are \(\varepsilon\)-isolated, and either \( \hat{f}(x) = \hat{f}(y) \) or \( d(\hat{f}(x),\hat{f}(y)) = d(\bar{x},\bar{y}) \). Therefore in both cases
\[ 
d(\hat{f}(x),\hat{f}(y)) \leq \frac{d(\bar{x},\bar{y})}{\varepsilon} \cdot \varepsilon \leq c \cdot d(x,y).
 \] 
Let now \( x \in X' \) and \( y \in X \setminus X' \), and assume without loss of generality that \( \hat{f}(y) = \bar{x} \) (the case \( \hat{f}(y) = \bar{y} \) is analogous, just systematically replace \( \bar{x} \) with \( \bar{y} \) in the argument below). Then either \( \bar{x} \in X' \), in which case \( d(\hat{f}(x),\hat{f}(y)) < n+1 \leq d(x,y) \leq c \cdot d(x,y) \) (since \( c \geq k \geq 1 \)), or else 
\[ 
d(\hat{f}(x),\hat{f}(y)) = d(x_0,\bar{x}) = \frac{d(x_0,\bar{x})}{n+1} \cdot n+1 \leq c \cdot d(x,y).
\]
The case \( x \in X \setminus X' \) and \( y \in X' \) can be treated similarly, so in all cases we obtained \( d(\hat{f}(x),\hat{f}(y)) \leq c \cdot d(x,y) \), as required. 
\end{proof}

We now want to show that the  \( \SLO^{\Lip(d)} \) principle holds for Borel subsets of \( X \), so let us fix arbitrary Borel \(A,B \subseteq X \). Assume first that \( B = X \). Then either \( A = X \), in which case the identity map on \( X \) witnesses \( A \leq_{\Lip(d)} B \), or else \( \neg A \neq \emptyset \), in which case any constant map with value \( \bar{x} \in \neg A \) witnesses \( B \leq_{\Lip(d)} \neg A \). The symmetric case \( B = \emptyset \) can be dealt with in a similar way, so in what follows we can assume without loss of generality that \( B \neq \emptyset,X \). Moreover, switching the role of \( A \) and \( B \) in the argument above we may further assume that \( A \neq \emptyset , X \). Set \( A' = A \cap X' \) and \( B' = B \cap X' \). Since \( X' \) has bounded diameter, by Theorem~\ref{th:unifcontandlip} there is a Lipschitz function \( f \colon (X',d) \to (X',d) \) such that either \( f^{-1}(B') = A' \) or \( f^{-1}(X' \setminus A') = B' \). Since \( \neg A \cap X' = X' \setminus A' \), applying Claim~\ref{claim:transfer} we get that either \( A \leq_{\Lip(d)} B \) or \( B \leq_{\Lip(d)} \neg A \), as desired.

Finally, let us show that the \( \Lip(d) \)-hierarchy on Borel subsets of \( X \) is also well-founded. Suppose not, and let \( (A_n)_{n \in \omega} \) be a sequence of Borel subsets of \( X \) such that \( A_{n+1} <_{\Lip(d)} A_n \) for every \( n \in \omega \). Notice that this in particular implies that \( A_n \neq \emptyset,X \) for every \( n \in \omega \). By Claim~\ref{claim:transfer} and our choice of the \( A_n \)'s, for all \( i < j \) there is no Lipschitz \( f \colon (X',d_{X'}) \to (X',d_{X'}) \) reducing \( A_i \cap X' \) to \( A_j \cap X' \).
Using Ramsey's theorem, we get that there is an infinite \( I \subseteq \omega \) such that either \( \forall i,j \in I \, (i < j \Rightarrow A_j \cap X' \leq_{\Lip(d_{X'})} A_i \cap X') \), or else \( \forall i,j \in I \, (i < j \Rightarrow A_j \cap X' \nleq_{\Lip(d_{X'})} A_i \cap X') \): in the former case the sequence \( (A_i \cap X' )_{i \in \omega} \) would give an infinite (strictly) descending chain in the \( \Lip(d_{X'}) \)-hierarchy on \( X' \), while in the latter it would give an infinite antichain (in the same hierarchy). Since \( X' \) has bounded diameter and all the sets \( A_i \cap X' \) are clearly Borel in it, both possibilities contradicts Theorem~\ref{th:unifcontandlip}, and hence we are done.
\end{proof}

\begin{corollary}
Let \( X = (X,d) \) be an ultrametric Polish space.Then the following are equivalent:
\begin{enumerate}
\item
the diameter of \( X \) is nontrivially unbounded;
\item
\( (\pow(\omega), \subseteq^*) \) embeds into  \( \Deg_{\boldsymbol{\Delta}^0_1}(\Lip(d)) \);
\item
the \( \Lip(d) \)-hierarchy on Borel (equivalently, clopen) subsets of \( X \) is very bad;
\item
the \( \Lip(d) \)-hierarchy on Borel (equivalently, clopen) subsets of \( X \) is not very good.
\end{enumerate}
Hence \( \Deg_{\boldsymbol{\Delta}^1_1}(\Lip(d)) \) is either very good or very bad.
\end{corollary}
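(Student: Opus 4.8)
The plan is to run the cycle of implications $(1) \Rightarrow (2) \Rightarrow (3) \Rightarrow (4) \Rightarrow (1)$, feeding on Theorems~\ref{th:unboundeddiam} and~\ref{th:notunbounded} together with two elementary observations about the quasi-order $(\pow(\omega), \subseteq^*)$, and then reading off the concluding dichotomy.

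For $(1) \Rightarrow (2)$ I would just quote Theorem~\ref{th:unboundeddiam}: the map $\psi$ produced there sends $\pow(\omega)$ into the clopen subsets of $X$ with $a \subseteq^* b \Rightarrow \psi(a) \leq_{\L(d)} \psi(b)$ and $\psi(a) \leq_{\Lip(d)} \psi(b) \Rightarrow a \subseteq^* b$; since $\L(d) \subseteq \Lip(d)$ (nonexpansive $=$ Lipschitz with constant $1$), these combine to $a \subseteq^* b \iff \psi(a) \leq_{\Lip(d)} \psi(b)$, so $a \mapsto [\psi(a)]_{\Lip(d)}$ is an embedding of $(\pow(\omega), \subseteq^*)$ into $\Deg_{\boldsymbol{\Delta}^0_1}(\Lip(d))$ --- which is exactly the ``in particular'' clause of that theorem.

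For $(2) \Rightarrow (3)$ the key point is that $(\pow(\omega), \subseteq^*)$ itself already contains an infinite antichain and an infinite strictly descending chain: fixing a partition $\omega = \bigsqcup_{n \in \omega} I_n$ into infinitely many infinite pieces, the sets $I_n$ ($n \in \omega$) are pairwise $\subseteq^*$-incomparable, and the sets $a_n = \bigcup_{m \geq n} I_m$ satisfy $a_{n+1} \subseteq^* a_n$ while $a_n \not\subseteq^* a_{n+1}$ (because $a_n \setminus a_{n+1} = I_n$ is infinite). An embedding into $\Deg_{\boldsymbol{\Delta}^0_1}(\Lip(d))$ carries these to an infinite antichain and an infinite descending chain, so that hierarchy is very bad; and since clopen sets are Borel and the relation $\leq_{\Lip(d)}$ on sets is the same in both contexts, the very same antichain and chain witness that the $\Lip(d)$-hierarchy on Borel subsets of $X$ is very bad as well. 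The implication $(3) \Rightarrow (4)$ is then immediate: a very bad hierarchy contains an infinite descending chain, hence is not well-founded, hence is not semi-well-ordered, i.e.\ not very good.

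Finally $(4) \Rightarrow (1)$ is the contrapositive of Theorem~\ref{th:notunbounded}: if the diameter of $X$ is \emph{not} nontrivially unbounded, then $\Deg_{\boldsymbol{\Delta}^1_1}(\Lip(d))$ is very good; and in the ``clopen'' reading of $(4)$ one notes that any failure of $\SLO^{\Lip(d)}$ or of well-foundedness witnessed by clopen sets is a fortiori witnessed by Borel sets, so that case reduces to the Borel one. Closing the cycle establishes the four equivalences and, in passing, that ``Borel'' and ``clopen'' are interchangeable in $(3)$ and $(4)$. For the last sentence: if $(1)$ holds then $\Deg_{\boldsymbol{\Delta}^1_1}(\Lip(d))$ is very bad by $(1) \Rightarrow (3)$, while if $(1)$ fails it is very good by Theorem~\ref{th:notunbounded}, and these two options are mutually exclusive. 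I do not expect a genuine obstacle anywhere in this: the argument is essentially bookkeeping, the only steps meriting a moment's care being the verification that $(\pow(\omega), \subseteq^*)$ really hosts the two required configurations and the clopen-versus-Borel matching just described.
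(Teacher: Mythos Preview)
Your proof is correct and follows essentially the same approach as the paper, which simply cites Theorems~\ref{th:unboundeddiam} and~\ref{th:notunbounded} and leaves the reader to fill in the cycle of implications you have spelled out. Your explicit verification that $(\pow(\omega), \subseteq^*)$ contains an infinite antichain and an infinite descending chain, and your handling of the Borel-versus-clopen interchangeability, are exactly the details the paper suppresses.
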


\begin{proof}
By Theorem~\ref{th:unboundeddiam} and Theorem~\ref{th:notunbounded}.
\end{proof}

\begin{corollary} \label{cor:unboundeddiam}
Let \( X = (X,d) \) be a \emph{perfect} ultrametric Polish space. Then
\begin{enumerate}
\item
\( X \) has bounded diameter \( \iff \) the \( \Lip(d) \)-hierarchy on Borel (equivalently, clopen) subsets of \( X \) is very good;
\item
\( X \) has unbounded diameter \( \iff \) the \( \Lip(d) \)-hierarchy on Borel (equivalently, clopen) subsets of \( X \) is very bad, and in fact in this case
the partial order \( ( \pow(\omega), \subseteq^*) \) embeds into \( \Deg_{\boldsymbol{\Delta}^0_1}(\Lip(d)) \).
\end{enumerate}
\end{corollary}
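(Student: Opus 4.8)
The plan is to deduce the statement from the dichotomy already available for arbitrary ultrametric Polish spaces (Theorems~\ref{th:unboundeddiam} and~\ref{th:unifcontandlip}, together with the preceding Corollary), exploiting the fact recorded right after Definition~\ref{def:nontriviallyunbounded} that for a \emph{perfect} \( X \) the diameter of \( X \) is nontrivially unbounded if and only if it is (plainly) unbounded. So the whole argument splits into the two mutually exclusive cases ``\( X \) has bounded diameter'' and ``\( X \) has unbounded diameter'', which for perfect \( X \) are exactly the cases ``the diameter is not nontrivially unbounded'' and ``the diameter is nontrivially unbounded''.

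First I would handle the case that \( X \) has unbounded diameter, hence nontrivially unbounded diameter. Then Theorem~\ref{th:unboundeddiam} furnishes a map \( \psi \) from \( \pow(\omega) \) into the \emph{clopen} (i.e.\ \( \boldsymbol{\Delta}^0_1 \)) subsets of \( X \) which induces an embedding of \( (\pow(\omega),\subseteq^*) \) into \( \Deg_{\boldsymbol{\Delta}^0_1}(\Lip(d)) \); this already yields the ``in fact'' clause of (2). To upgrade ``\( (\pow(\omega),\subseteq^*) \) embeds'' to ``very bad'' it suffices to note that \( (\pow(\omega),\subseteq^*) \) is itself very bad as a quasi-order: fixing a partition \( \omega = \bigsqcup_{n\in\omega} I_n \) into infinitely many infinite sets, the \( I_n \) form an infinite \( \subseteq^* \)-antichain, while the sets \( b_n = \bigcup_{m\ge n} I_m \) form an infinite strictly \( \subseteq^* \)-descending chain (as \( b_n \setminus b_{n+1} = I_n \) is infinite). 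Transporting these along \( \psi \) produces infinite antichains and infinite descending chains among clopen, hence Borel, subsets of \( X \), so the \( \Lip(d) \)-hierarchy on the clopen --- a fortiori on the Borel --- subsets of \( X \) is very bad.

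Next I would handle the case that \( X \) has bounded diameter: here Theorem~\ref{th:unifcontandlip} states directly that \( \Deg_{\boldsymbol{\Delta}^1_1}(\Lip(d)) \) is very good, and since the restriction of a semi-well-ordered structure to any subcollection of sets is again semi-well-ordered, the \( \Lip(d) \)-hierarchy on the clopen subsets is very good as well. Since ``very good'' and ``very bad'' are incompatible (the former has neither an infinite antichain nor an infinite descending chain, the latter has both) and the two cases above exhaust all possibilities, both biconditionals in (1) and (2) follow at once --- the forward implications from the respective theorems, the backward ones read contrapositively --- and the parenthetical ``equivalently, clopen'' clauses are covered en route. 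I do not expect any genuine obstacle here: essentially all the work sits inside Theorems~\ref{th:unboundeddiam} and~\ref{th:unifcontandlip}, and the only points requiring (trivial) care are the elementary fact that \( (\pow(\omega),\subseteq^*) \) already contains infinite antichains and descending chains, and the bookkeeping that the bounded/unbounded dichotomy matches the very good/very bad one cleanly.
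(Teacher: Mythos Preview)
Your proposal is correct and follows essentially the same route as the paper, which in fact omits an explicit proof of this corollary: it is intended to follow immediately from the preceding corollary (equivalence of ``nontrivially unbounded diameter'' with the hierarchy being very bad, itself proved via Theorems~\ref{th:unboundeddiam} and~\ref{th:notunbounded}) together with the observation after Definition~\ref{def:nontriviallyunbounded} that for perfect spaces ``nontrivially unbounded'' coincides with ``unbounded''. The only minor difference is that for the bounded-diameter direction you invoke Theorem~\ref{th:unifcontandlip} directly rather than Theorem~\ref{th:notunbounded}, which is perfectly fine (and arguably more direct, since bounded diameter is literally the hypothesis there).
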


Let us consider again the ultrametrics \( d_{\phi} \) introduced in Definition \ref{def: d_phi}. 

\begin{corollary}
Let \( \phi \colon \omega \to \RR^+  \) have unbounded range and suppose that \( \inf \rg (\phi) >0 \). Then \( (\pow(\omega), \subseteq^*) \) embeds into both the \( \Lip(d_\phi) \)- and \( \L(d_\phi) \)-hierarchy on clopen subsets of \( \pre{\omega}{\omega} \), and therefore both \( \Deg_{\boldsymbol{\Delta}^0_1}(\Lip(d_\phi)) \) and
\( \Deg_{\boldsymbol{\Delta}^0_1}(\L(d_\phi)) \)
are very bad. Conversely, if \( \phi \) has bounded range, then the \( \Lip(d_\phi) \)-hierarchy \( \Deg_{\boldsymbol{\Delta}^1_1}\) \((\Lip(d_\phi)) \) on Borel subsets of \( \pre{\omega}{\omega} \) is very good.
\end{corollary}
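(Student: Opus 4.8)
The plan is to deduce this corollary directly from the structural results already established for (perfect) ultrametric Polish spaces, after a one-line computation of the diameter of \( X_\phi := (\pre{\omega}{\omega}, d_\phi) \) in terms of \( \rg(\phi) \). First I would record the basic setup: since \( \inf \rg(\phi) > 0 \), the metric \( d_\phi \) is a complete ultrametric compatible with the product topology on \( \pre{\omega}{\omega} \) (as noted right after Definition~\ref{def: d_phi}), so \( X_\phi \) is an ultrametric Polish space; moreover it is perfect, because \( \pre{\omega}{\omega} \) with the product topology has no isolated points.

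For the first assertion, assume \( \rg(\phi) \) is unbounded, and show that the diameter of \( X_\phi \) is unbounded: given \( k \in \omega \), pick \( n \in \omega \) with \( \phi(n) > k \) and any \( m \neq n \); then \( x = n {}^\smallfrown{} 0^{(\omega)} \) and \( y = m {}^\smallfrown{} 0^{(\omega)} \) differ at coordinate \( 0 \), so \( \bar{d}(x,y) = 1 \) and hence \( d_\phi(x,y) = \max\{\phi(n),\phi(m)\} \geq \phi(n) > k \). Since \( X_\phi \) is perfect, the observation following Definition~\ref{def:nontriviallyunbounded} upgrades this to: the diameter of \( X_\phi \) is \emph{nontrivially} unbounded. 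Now Theorem~\ref{th:unboundeddiam} applies verbatim, yielding a map \( \psi \) from \( \pow(\omega) \) into the clopen subsets of \( \pre{\omega}{\omega} \) inducing an embedding of \( (\pow(\omega), \subseteq^*) \) into both \( \Deg_{\boldsymbol{\Delta}^0_1}(\Lip(d_\phi)) \) and \( \Deg_{\boldsymbol{\Delta}^0_1}(\L(d_\phi)) \) (for the latter one combines parts (1) and (2) of that theorem using \( \L(d_\phi) \subseteq \Lip(d_\phi) \)). To conclude that both hierarchies are very bad, I would note that \( (\pow(\omega), \subseteq^*) \) contains an infinite antichain (any family of pairwise disjoint infinite sets) and an infinite strictly descending chain (e.g.\ \( \bigcup_{k \geq n} I_k \) for \( n \in \omega \), where \( \{ I_k \}_{k \in \omega} \) partitions \( \omega \) into infinite pieces), and that both features transfer along the embedding.

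For the converse, assume \( \rg(\phi) \) is bounded, say \( \phi(n) \leq R \) for all \( n \in \omega \). Since \( \bar{d} \leq 1 \) everywhere, we get \( d_\phi(x,y) \leq R \) for all \( x,y \in \pre{\omega}{\omega} \), so \( X_\phi \) has bounded diameter; Theorem~\ref{th:unifcontandlip} then immediately gives that \( \Deg_{\boldsymbol{\Delta}^1_1}(\Lip(d_\phi)) \) is very good.

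There is no genuinely hard step here: the corollary is a packaging of Theorems~\ref{th:unboundeddiam} and~\ref{th:unifcontandlip} applied to the particular spaces \( X_\phi \). The only point requiring (minimal) care is matching the hypotheses — \( \inf \rg(\phi) > 0 \) is precisely what guarantees completeness of \( d_\phi \), and perfectness of \( X_\phi \) is what lets us replace the technical condition ``nontrivially unbounded diameter'' of Theorem~\ref{th:unboundeddiam} by the plain unboundedness of \( \rg(\phi) \) (equivalently, of the diameter of \( X_\phi \)).
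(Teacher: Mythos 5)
Your proof is correct and follows exactly the route the paper takes: identify \( (\pre{\omega}{\omega}, d_\phi) \) as a perfect ultrametric Polish space, observe that its diameter is (nontrivially) unbounded precisely when \( \rg(\phi) \) is unbounded, and then invoke Theorems~\ref{th:unboundeddiam} and~\ref{th:unifcontandlip}. The only difference is that you spell out the diameter computation and the translation from embeddings of \( (\pow(\omega), \subseteq^*) \) to ``very bad,'' which the paper leaves implicit.
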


\begin{proof}
Observe that \( (\pre{\omega}{\omega},d_\phi) \) is a perfect ultrametric Polish space, and that it has unbounded diameter if and only if the  \( \rg( \phi) \) is unbounded in \( \RR^+ \); then apply Theorems~\ref{th:unboundeddiam} and~\ref{th:unifcontandlip}.
\end{proof}

\section{Nonexpansive reducibilities}

\begin{definition} \label{def:honestincreasingsequence}
Let \( X = (X,d) \) be an ultrametric Polish space. We say that \( R(d) \) contains an \emph{honest increasing sequence} if it contains a strictly increasing sequence  \( (r_n)_{n \in \omega} \) such that for some sequences \( (x_n)_{n \in \omega} \), \( (y_n)_{n \in \omega} \) of points in \( X \) 
the following conditions holds:
\begin{enumerate}[(i)]
\item
\( d(x_n,x_m) = r_{\max\{ n,m \}} \) for all distinct \( n,m \in \omega \);
\item
\( d(x_0,y_0) < r_0 \) and \( d(x_{n+1},y_{n+1}) < d(x_n,y_n) \) for all \( n \in \omega \).
\end{enumerate}
\end{definition}

The above condition is somewhat technical, but in case \( X = (X,d) \) is a perfect ultrametric Polish space it is immediate to check that \( R(d) \) contains an honest increasing sequence if and only if one of the following equivalent%
\footnote{To see that these two conditions are indeed equivalent, argue as in the first part of the proof of Theorem~\ref{th:unboundeddiam}.}
 conditions are satisfied:
\begin{enumerate}
\item
there is \( X' \subseteq X \) such that \( R(d_{X'}) \) has order type \( \omega \) (with respect to the usual ordering on \( \RR \));
\item
there is a sequence \( (x_n)_{n \in \omega} \) of points in \( X \) and a strictly increasing sequence \( (r_n)_{n \in \omega} \) of distances in \( R(d) \) such that \( d(x_n,x_m) = r_{\max \{n,m \}} \) for all distinct \( n,m \in \omega \).
\end{enumerate}

Notice also that if the diameter of an ultrametric Polish space \( X = (X,d) \) is nontrivially unbounded,  then \( R(d) \) contains an honest increasing sequence by the first part of the proof of Theorem~\ref{th:unboundeddiam}.

\begin{theorem} \label{th:increasingdistances} 
Let \( X = (X,d) \) be a ultrametric Polish space such that \( R(d) \) contains an honest increasing sequence. Then there is a map \( \psi \) from \( \pow(\omega)  \) into the clopen subsets of \( X \) such that for all \( a,b \subseteq \omega \)
\[ 
a \subseteq^* b \iff \psi(a) \leq_{\L(d)} \psi(b).
 \] 
\end{theorem}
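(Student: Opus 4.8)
The plan is to build the map \( \psi \) using the same geometric data used in Theorem~\ref{th:unboundeddiam}, but now keeping careful track of the fact that the reductions we need only have to be \emph{nonexpansive} (so the quadratic gaps \( r_{n+1} > r_n^2 \) exploited there for the \(\Lip\)-direction are not available, and we must instead use that in a nonexpansive reduction distances can only \emph{shrink}). First I would extract from the honest increasing sequence, after passing to a subsequence and relabelling via the \( a \mapsto \hat a \) trick, sequences \( (r_n)_{n\in\omega} \) strictly increasing in \( R(d) \), points \( (x_n)_{n\in\omega} \) with \( d(x_n,x_m)=r_{\max\{n,m\}} \), and companion points \( (y_n)_{n\in\omega} \) with \( s_n := d(x_n,y_n) \) strictly decreasing and \( s_n < r_0 \) (so that \( B_d(x_n,s_n) \subseteq B_d(x_n,r_0) \) and these balls are pairwise disjoint clopen sets by the ultrametric inequality); note \( y_n \in B_d(x_n,s_n) \setminus\{x_n\} \) but \( y_n \notin B_d(x_n,s_n) \) can be arranged — more precisely we want \( y_n \) witnessing non-triviality at a controlled radius, as in Theorem~\ref{th:unboundeddiam}. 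Then set \( \psi(a) = \bigcup_{i\in\hat a} B_d(x_i,s_i) \); the proof that each \( \psi(a) \) is clopen is identical to the one in Theorem~\ref{th:unboundeddiam}.

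Next I would prove the forward implication \( a\subseteq^* b \Rightarrow \psi(a)\leq_{\L(d)}\psi(b) \). This is \emph{exactly} the nonexpansive reduction \( f \) constructed in part (1) of Theorem~\ref{th:unboundeddiam}: that construction never used property~(b) (the quadratic growth) nor anything beyond \( (r_n) \) increasing, \( (s_n) \) decreasing, and the disjointness of the balls, and it was verified there to be nonexpansive. So I would simply cite that argument, pointing out that the hypotheses it actually uses are precisely the ones available here. The real content is the converse: \( \psi(a)\leq_{\L(d)}\psi(b)\Rightarrow a\subseteq^* b \). Suppose \( f\colon(X,d)\to(X,d) \) is nonexpansive with \( f^{-1}(\psi(b))=\psi(a) \). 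Since \( f \) is nonexpansive, for any \( i\in\hat a \) and any \( x,y\in B_d(x_i,s_i) \) we have \( d(f(x),f(y))\leq d(x,y)<s_i \); because the balls \( B_d(x_j,s_j) \) have pairwise distances \( \geq r_0 > s_i \), the image \( f(B_d(x_i,s_i)) \) cannot meet two distinct such balls, and since \( x_i\in\psi(a) \) forces \( f(x_i)\in\psi(b)\subseteq\bigcup_{j} B_d(x_j,s_j) \), we conclude \( f(B_d(x_i,s_i))\subseteq B_d(x_j,s_j) \) for a unique \( j=j(i)\in\hat b \). This is the analogue of Claim~\ref{claim:Lip1}, but now with no lower bound on \( n \): nonexpansiveness gives it for \emph{every} \( i\in\hat a \).

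The crux is then to show \( j(i)\geq i \) for all large enough \( i\in\hat a \), which yields \( i\in\hat b \) only after also ruling out \( j(i)>i \); combined these give \( j(i)=i \), hence \( i\in\hat b \), hence \( a\subseteq^* b \). The argument \( j(i)\geq i \) should mimic Claim~\ref{claim:Lip2}: if \( j(i)<i \) then \( d(f(x_i),f(y_i))\leq d(x_i,y_i)=s_i \), and since \( s_i \) is small and \( B_d(x_{j(i)},s_{j(i)}) \) has radius \( s_{j(i)}>s_i \), we would get \( f(y_i)\in B_d(x_{j(i)},s_{j(i)})\subseteq\psi(b) \) forcing \( y_i\in\psi(a) \), contradicting \( y_i\notin\psi(a) \) (here I must have arranged \( y_i\notin\bigcup_k B_d(x_k,s_k) \), i.e.\ \( y_i \) sits in \( B_d(x_i,r_0)\setminus B_d(x_i,s_i) \) or similar — this bookkeeping on the \( y_i \) is where I expect the main fiddliness). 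To exclude \( j(i)>i \): fix the least \( \bar\imath\in\hat a \); then \( f(x_{\bar\imath}),f(x_i)\in B_d(x_{j(\bar\imath)},s_{j(\bar\imath)}), B_d(x_{j(i)},s_{j(i)}) \) respectively, so if \( j(i)\neq j(\bar\imath) \) then \( d(f(x_{\bar\imath}),f(x_i))=r_{\max\{j(\bar\imath),j(i)\}} \), while \( d(x_{\bar\imath},x_i)=r_i \); nonexpansiveness gives \( r_{\max\{j(\bar\imath),j(i)\}}\leq r_i \), i.e. \( j(i)\leq i \) (once \( i \) exceeds \( \max\{\bar\imath, j(\bar\imath)\} \)). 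Putting the two inequalities together: \( j(i)=i \) for all \( i\in\hat a \) with \( i>\bar k:=\max\{\bar\imath,j(\bar\imath)\} \), and \( j(i)=i\in\hat b \) gives \( k\in\hat a\Rightarrow k\in\hat b \) for all \( k\geq\bar k \), i.e.\ \( \hat a\subseteq^*\hat b \), equivalently \( a\subseteq^* b \). The main obstacle, as flagged, is choosing and tracking the auxiliary points \( y_n \) (and the exact radii) so that all three facts hold simultaneously: \( y_n\notin\psi(a) \) regardless of \( a \), \( d(x_n,y_n)=s_n \) strictly decreasing, and \( B_d(x_n,s_n) \) small enough relative to \( r_0 \); this is handled by repeating verbatim the recursive construction in the first part of the proof of Theorem~\ref{th:unboundeddiam}, which is legitimate since (as the footnote after Definition~\ref{def:honestincreasingsequence} observes) an honest increasing sequence yields exactly those data.
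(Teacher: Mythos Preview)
Your proposal is correct and follows essentially the same approach as the paper's proof: use the honest-increasing-sequence witnesses directly as the $(x_n),(y_n),(r_n)$ and set $s_n=d(x_n,y_n)$, define $\psi(a)=\bigcup_{i\in\hat a}B_d(x_i,s_i)$, reuse the nonexpansive reduction of Theorem~\ref{th:unboundeddiam}(1) for the forward direction, and for the converse run the analogues of Claims~\ref{claim:Lip1}--\ref{claim:Lip2} with the Lipschitz constant $n$ removed. Two small clean-ups: the line ``$y_n\in B_d(x_n,s_n)\setminus\{x_n\}$ but $y_n\notin B_d(x_n,s_n)$'' is just a slip (since $d(x_n,y_n)=s_n$ and the ball is open, $y_n\notin B_d(x_n,s_n)$ automatically, which is exactly what you need), and there is no need to ``repeat verbatim'' the recursive construction of Theorem~\ref{th:unboundeddiam}---the honest increasing sequence already hands you the required data without the quadratic gaps, exactly as the paper notes.
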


\begin{proof}
Argue similarly to Theorem~\ref{th:unboundeddiam}, with the following variations:
\begin{enumerate}[(a)]
\item
let the sequences \( (x_n)_{n \in \omega} \), \( (y_n)_{n \in \omega} \), and \( (r_n)_{n \in \omega} \) constructed at the beginning of the proof of Theorem~\ref{th:unboundeddiam} be witnesses of the fact that \( R(d) \) contains an honest increasing sequence (forgetting about the extra properties required in Theorem~\ref{th:unboundeddiam}), and set \( s_n = d(x_n,y_n) \);%
\footnote{Clearly, the points \( x_n \) and \( y_n \) can again be chosen in any given countable dense set \( Q \subseteq X \).}
\item
given \( a \subseteq \omega \), define \( \psi(a) \) as before, i.e.\ set \( \psi(a) = \bigcup_{i \in \hat{a}}  B_d(x_i,s_i) \), where \( \hat{a} = \{ 2i \mid i \in \omega \} \cup \{ 2i+1 \mid i \in a \} \);
\item
to prove the backward direction, use an argument similar to that of Theorem~\ref{th:unboundeddiam}, but dropping any reference to the integer \( n \) (this simplification can be adopted here because we have to deal only with nonexpansive functions). More precisely:
 let \( f \) be a nonexpansive reduction of \( \psi(a) \) to \( \psi(b) \). Then for every \( i \in \hat{a} \) there is a unique \( j \in \omega \) such that \( f(B_d(x_i,s_i)) \subseteq B_d(x_j,s_j) \) (because of the choice of the  \( x_i \), \( y_i \)'s and the fact that \( f \) is nonexpansive). Arguing as in Claim~\ref{claim:Lip2}, one immediately sees that we cannot have \( j < i \) because in such case \( s_i \leq s_j \). Conclude as in the final part of the proof of Theorem~\ref{th:unboundeddiam}, using the fact that \( r_k < r_j \) for every \( j > k \). \qedhere
\end{enumerate}
\end{proof}

\begin{corollary} \label{cor:X_1}
There is an ultrametric Polish space \( X_1 = (X_1,d_1) \) whose set of nonzero distances \( R(d_1) \) is bounded away from \( 0 \) (hence it is countable and discrete) such that \( (\pow(\omega), \subseteq^*) \) embeds into the \( \L(d_1) \)-hierarchy on (clopen subsets of) \( X_1 \). Therefore \( \Deg(\L(d_1)) = \Deg_{\boldsymbol{\Delta}^0_1}(\L(d_1)) \) is very bad.
\end{corollary}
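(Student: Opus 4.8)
The plan is to produce an explicit example in the spirit of the space $X_0$ of Corollary~\ref{cor:countable}, but with the distances $2^{-n}$ and $\max\{n,m\}$ occurring there replaced, respectively, by a strictly \emph{decreasing} sequence $(s_n)_{n\in\omega}$ of positive reals converging to a \emph{strictly positive} limit, and by the values $r_{\max\{n,m\}}$ for a strictly \emph{increasing} sequence $(r_n)_{n\in\omega}$ of positive reals. This is possible precisely because, in contrast with Theorem~\ref{th:unboundeddiam}, Theorem~\ref{th:increasingdistances} only requires that $R(d)$ contain an honest increasing sequence, a property which does not entail any form of unboundedness. Concretely I would fix such sequences with moreover $s_0 < r_0$ (for instance $r_n = 2 - 2^{-n}$ and $s_n = \tfrac{1}{2} + 2^{-(n+2)}$), let $X_1 = \{ x^i_n \mid n \in \omega,\ i = 0,1 \}$, and set
\[
d_1(x^i_n, x^j_m) =
\begin{cases}
0 & \text{if } n = m \text{ and } i = j,\\
s_n & \text{if } n = m \text{ and } i \neq j,\\
r_{\max\{n,m\}} & \text{if } n \neq m.
\end{cases}
\]

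First I would check that $d_1$ is a complete ultrametric. The strong triangle inequality reduces to a short case analysis on how three given distinct points distribute among the two-element ``columns'' $\{x^0_n, x^1_n\}$ (at most two distinct points can lie in one column, so at least two columns are involved); in each case one uses the single inequality $s_l \leq s_0 < r_0 \leq r_k$, valid for all $k,l$, to see that two of the three pairwise distances coincide and dominate the third. Completeness is automatic since $\inf R(d_1) = \min\{\lim_n s_n,\, r_0\} > 0$, so every Cauchy sequence is eventually constant; and $X_1$ is separable as it is countable. In particular $X_1$ is discrete and $R(d_1) = \{ s_n \mid n \in \omega \} \cup \{ r_n \mid n \in \omega \}$ is bounded away from $0$ (indeed discrete, its only possible accumulation points $\lim_n s_n$ and $\lim_n r_n$ lying outside it); since $X_1$ is discrete, every subset of $X_1$ is clopen, so $\Deg(\L(d_1)) = \Deg_{\boldsymbol{\Delta}^0_1}(\L(d_1))$.

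Next I would verify that $R(d_1)$ contains an honest increasing sequence in the sense of Definition~\ref{def:honestincreasingsequence}: the strictly increasing sequence $(r_n)_{n\in\omega} \subseteq R(d_1)$, together with the points $x_n := x^0_n$ and $y_n := x^1_n$, satisfies $d_1(x_n, x_m) = r_{\max\{n,m\}}$ for distinct $n,m$ by the very definition of $d_1$ (condition~(i)), while $d_1(x_0,y_0) = s_0 < r_0$ and $d_1(x_{n+1},y_{n+1}) = s_{n+1} < s_n = d_1(x_n,y_n)$ by the strict monotonicity of $(s_n)_{n\in\omega}$ (condition~(ii)). Theorem~\ref{th:increasingdistances} then yields a map $\psi$ from $\pow(\omega)$ into the clopen subsets of $X_1$ with $a \subseteq^* b \iff \psi(a) \leq_{\L(d_1)} \psi(b)$ for all $a,b \subseteq \omega$, that is, an embedding of $(\pow(\omega), \subseteq^*)$ into $\Deg_{\boldsymbol{\Delta}^0_1}(\L(d_1)) = \Deg(\L(d_1))$. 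Finally, this hierarchy is very bad because $(\pow(\omega), \subseteq^*)$ carries both infinite antichains (any infinite almost disjoint family) and infinite descending chains (e.g.\ $\omega \supseteq^* \omega\setminus\{0\} \supseteq^* \omega\setminus\{0,1\} \supseteq^* \cdots$), and both features are transported by $\psi$.

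I do not expect a genuine obstacle here, since the statement is essentially an application of Theorem~\ref{th:increasingdistances} to a tailored example; the only point demanding a little care is the simultaneous bookkeeping in the choice of $(r_n)$ and $(s_n)$ — one needs $\sup_n s_n < r_0$ so that the ultrametric inequality becomes transparent, yet $\inf_n s_n > 0$ so that $R(d_1)$ stays bounded away from $0$ — and this is met by any pair of sequences as above.
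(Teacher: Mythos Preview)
Your construction and argument are essentially the paper's own: the same two-points-per-column space, the same between-column distances $r_n = 2 - 2^{-n}$, a strictly decreasing within-column sequence bounded away from $0$, followed by the verification of Definition~\ref{def:honestincreasingsequence} and an appeal to Theorem~\ref{th:increasingdistances}. One small slip in your final sentence: the sets $\omega,\ \omega\setminus\{0\},\ \omega\setminus\{0,1\},\dotsc$ are pairwise $\subseteq^*$-equivalent (they differ only by finite sets), so they do not form a strictly descending chain in $(\pow(\omega),\subseteq^*)$; replace this example by, e.g., $a_n = \{2^n k \mid k \in \omega\}$, where each $a_n \setminus a_{n+1}$ is infinite.
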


\begin{proof}
Let \( X_1 = \{ x^i_n \mid n \in \omega, i = 0,1 \} \) and set
\[ 
d_1(x^i_n,x^j_m) = 
\begin{cases}
0 & \text{if } n = m \text{ and } i = j \\
\frac{1}{2} + 2^{-(n+1)} & \text{if } n = m \text{ and } i \neq j \\
2 - 2^{-\max \{ n,m \}} & \text{if } n \neq m.
\end{cases}%
 \] 
It is easy to check that \( X_1 = (X_1,d_1) \) is an ultrametric Polish space. Moreover \( r \geq \frac{1}{2} \) for every \( r \in R(d_1) \), hence \( R(d_1) \) is bounded away from \( 0 \). Moreover, the sequences obtained by setting \( r_n  = 2 - 2^{-n} \), \( x_n = x_n^0 \), and  \( y_n = x_n^1 \) witness that \( R(d_1) \) contains an honest increasing sequence. Hence the result follows from Theorem~\ref{th:increasingdistances}.
\end{proof}

\begin{remark}
Notice that if an ultrametric Polish space \( X = (X,d) \) satisfies the hypothesis of Corollary~\ref{cor:X_1} (i.e.\ it is such that \( R(d) \) is bounded away from \( 0 \)), then its \( \Lip(d) \)-hierarchy is always (trivially) very good by Theorem~\ref{th:notunbounded} and the fact that all its points are \( \varepsilon  \)-isolated for \( \varepsilon = \inf R(d) > 0 \).
\end{remark}

\begin{corollary} \label{cor:specialincreasing}
Given \( \phi \colon \omega \to \RR^+  \) such that \( \inf \rg( \phi)>0 \), if \( \rg(\phi) \) contains an increasing \( \omega \)-sequence then \( ( \pow(\omega), \subseteq^*) \) embeds into the \( \L(d_\phi) \)-hierarchy on clopen subsets of \( \pre{\omega}{\omega} \), and therefore \( \Deg_{\boldsymbol{\Delta}^0_1}(\L(d_\phi)) \) is very bad.
\end{corollary}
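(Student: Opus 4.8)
The plan is to reduce the statement to Theorem~\ref{th:increasingdistances}, so the entire task is to check that under the hypothesis $R(d_\phi)$ contains an honest increasing sequence in the sense of Definition~\ref{def:honestincreasingsequence}. Once that is done, Theorem~\ref{th:increasingdistances} produces a map $\psi$ from $\pow(\omega)$ into the clopen subsets of $\pre{\omega}{\omega}$ with $a \subseteq^* b \iff \psi(a) \leq_{\L(d_\phi)} \psi(b)$, hence an embedding of $(\pow(\omega),\subseteq^*)$ into $\Deg_{\boldsymbol{\Delta}^0_1}(\L(d_\phi))$, and ``very bad'' follows by the usual argument since $\subseteq^*$ already contains infinite antichains (any infinite almost disjoint family) and infinite strictly descending chains (the tails $\bigcup_{i \geq n} P_i$ of a partition of $\omega$ into infinitely many infinite pieces). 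Since $d_\phi$ is a complete ultrametric compatible with the product topology and $\pre{\omega}{\omega}$ is perfect, $(\pre{\omega}{\omega}, d_\phi)$ is a perfect ultrametric Polish space, so by the second of the two equivalent conditions recorded after Definition~\ref{def:honestincreasingsequence} it even suffices to exhibit a sequence $(x_n)_{n \in \omega}$ of points of $\pre{\omega}{\omega}$ together with a strictly increasing sequence $(r_n)_{n \in \omega}$ of elements of $R(d_\phi)$ such that $d_\phi(x_n,x_m) = r_{\max\{n,m\}}$ for all distinct $n,m \in \omega$.

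To build these, I would fix natural numbers $(n_k)_{k \in \omega}$ with $(\phi(n_k))_{k \in \omega}$ strictly increasing, which exists precisely because $\rg(\phi)$ contains an increasing $\omega$-sequence (and forces the $n_k$ to be pairwise distinct). The only computation needed is that for distinct $i,j \in \omega$ the sequences $i {}^\smallfrown{} 0^{(\omega)}$ and $j {}^\smallfrown{} 0^{(\omega)}$ differ already at coordinate $0$, so $\bar{d}(i {}^\smallfrown{} 0^{(\omega)}, j {}^\smallfrown{} 0^{(\omega)}) = 1$ and hence $d_\phi(i {}^\smallfrown{} 0^{(\omega)}, j {}^\smallfrown{} 0^{(\omega)}) = \max\{\phi(i),\phi(j)\}$; in particular $\phi(n_k) = \max\{\phi(n_k),\phi(n_0)\} \in R(d_\phi)$ for every $k \geq 1$. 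Then I set $x_n = n_{n+1} {}^\smallfrown{} 0^{(\omega)}$ and $r_n = \phi(n_{n+1})$ for $n \in \omega$: the sequence $(r_n)_{n \in \omega}$ is strictly increasing, $r_n \in R(d_\phi)$ for all $n$ (because $n+1 \geq 1$), and for distinct $n,m$ one gets $d_\phi(x_n,x_m) = \max\{\phi(n_{n+1}),\phi(n_{m+1})\} = \phi(n_{\max\{n,m\}+1}) = r_{\max\{n,m\}}$, as required. This completes the reduction, and an application of Theorem~\ref{th:increasingdistances} finishes the proof.

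I do not expect any genuine obstacle: all the real work is inside Theorem~\ref{th:increasingdistances}. The one delicate point — and the reason for the harmless reindexing $k \mapsto n_{k+1}$ above — is that the least value $\phi(n_0)$ of a strictly increasing sequence in $\rg(\phi)$ need not itself be realized as a $d_\phi$-distance (this really can fail, e.g.\ when $\phi$ is strictly increasing in $n$ and $n_0 = 0$), so one cannot simply take $(r_n) = (\phi(n_n))_n$; starting the honest increasing sequence one step higher repairs this cleanly. (One could instead verify Definition~\ref{def:honestincreasingsequence} literally by also choosing the auxiliary points $y_n$, e.g.\ with $r_0 = \phi(n_0)/2$, realized as $d_\phi(n_0 {}^\smallfrown{} 0 {}^\smallfrown{} 0^{(\omega)}, n_0 {}^\smallfrown{} 1 {}^\smallfrown{} 0^{(\omega)})$, and $y_n$ equal to $n_n$ followed by zeros and a single $1$ sufficiently far out, but the reindexing above avoids this bookkeeping.) It is also worth noting the resulting $\Lip$-versus-$\L$ contrast: if $\rg(\phi)$ is bounded yet still contains an increasing $\omega$-sequence (for instance $\phi(n) = 1 - \frac{1}{n+2}$), then $(\pre{\omega}{\omega},d_\phi)$ has bounded diameter, so by Theorem~\ref{th:unifcontandlip} its $\Lip(d_\phi)$-hierarchy on Borel sets is very good, while the present corollary shows its $\L(d_\phi)$-hierarchy is very bad.
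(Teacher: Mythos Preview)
Your proposal is correct and follows exactly the paper's approach: observe that \( (\pre{\omega}{\omega}, d_\phi) \) is a perfect ultrametric Polish space, verify that \( R(d_\phi) \) contains an honest increasing sequence precisely when \( \rg(\phi) \) contains an increasing \( \omega \)-sequence, and then invoke Theorem~\ref{th:increasingdistances}. The paper states this in a single sentence without writing out the construction, whereas you supply the explicit sequences \( x_n = n_{n+1} {}^\smallfrown{} 0^{(\omega)} \), \( r_n = \phi(n_{n+1}) \) and correctly flag the minor edge case that forces the shift by one index; this extra care is sound but not a different method.
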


\begin{proof}
Notice that \( (\pre{\omega}{\omega}, d_\phi ) \) is always a perfect Polish space, and that \( R(d_\phi) \) has an honest increasing sequence if and only if \( \rg(\phi) \) contains an increasing \( \omega \)-sequence. Then apply Theorem~\ref{th:increasingdistances}.
\end{proof}

\begin{proposition} \label{prop:descendingdistances}
Suppose that \( X = (X,d) \) is an ultrametric Polish space such that \( R(d) \) is either finite or a descending (\( \omega \)-)sequence converging to \( 0 \), let \( I \leq \omega \) be the cardinality of \( R(d) \), and let \( \rho \) be the unique order-preserving map from \( \{ 2^{-i} \mid i < I \} \) and \( R(d) \). Then there is a closed set \( C \subseteq \pre{\omega}{\omega} \) and a bijection \( f \colon C \to X \) such that for all \( x,y \in X \)
\begin{equation} \tag{$*$} \label{eq:isom}
d(x,y) = \rho(\bar{d}(f^{-1}(x),f^{-1}(y))).
\end{equation}
In particular, the structures \( ( \pow(X), \leq_{\L(d)}, \neg ) \) and \( ( \pow(C), \leq_{\L(\bar{d})}, \neg ) \) are isomorphic.
\end{proposition}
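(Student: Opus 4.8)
The plan is to redo the Luzin–scheme construction from Lemma~\ref{lemma:unifcontandlip}, but choosing the radii of the approximating balls from \( R(d) \) itself, so that the resulting canonical bijection becomes an isometry modulo the relabeling \( \rho \) instead of merely uniformly continuous. Write \( R(d) = \{ t_i \mid i < I \} \) in strictly decreasing order, so that \( \rho(2^{-i}) = t_i \), and adopt the conventions \( t_I = 0 \) and \( \bar{B}_d(q,0) = \{ q \} \) when \( I < \omega \) (here \( \bar{B}_d(q,r) \) denotes the closed ball). Fixing a countable dense \( Q \subseteq X \), I would define clopen sets \( A_s \subseteq X \) for \( s \in \pre{<\omega}{\omega} \) by recursion on \( \leng(s) \): set \( A_\emptyset = X \) (which equals \( \bar{B}_d(q,t_0) \) for any \( q \), since \( t_0 = \max R(d) \)); given \( A_s \) with \( \leng(s) = n < I \), let the children \( A_{s {}^\smallfrown{} i} \) enumerate without repetitions the closed balls \( \{ \bar{B}_d(q,t_{n+1}) \mid q \in Q \cap A_s \} \) (filling the remaining indices with \( \emptyset \)); and for \( \leng(s) \geq I \) simply freeze the scheme, \( A_{s {}^\smallfrown{} 0} = A_s \) and \( A_{s {}^\smallfrown{} i} = \emptyset \) for \( i \geq 1 \). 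The point of this bookkeeping is that a set at level \( n \) is a closed ball of radius \( t_n \), hence has diameter \( \leq t_n \), while its children are pairwise disjoint closed balls of the next radius \( t_{n+1} \); so two points sitting in distinct children are at distance \( > t_{n+1} \) and, since \( t_n \) is the immediate successor of \( t_{n+1} \) in \( R(d) \cup \{ 0 \} \), at distance exactly \( t_n \).

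Then I would carry out the usual checks, parallel to Lemma~\ref{lemma:unifcontandlip}. First, that \( (A_s)_s \) is a Luzin scheme of clopen sets with \( A_s = \bigcup_i A_{s {}^\smallfrown{} i} \): the children are pairwise disjoint (distinct closed balls of equal radius in an ultrametric are disjoint) and contained in \( A_s \) (ultrametric inequality), and the union is exhaustive because each \( w \in A_s \) lies in the clopen nonempty ball \( \bar{B}_d(w,t_{n+1}) \), which therefore meets \( Q \) and so equals one of the listed balls. Second, that \( \operatorname{diam}(A_s) \to 0 \) as \( \leng(s) \to \infty \), both in the infinite case (since \( t_n \to 0 \)) and the finite case (since \( A_s \) is a singleton once \( \leng(s) \geq I \)). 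Consequently, by completeness of \( d \) and Cantor's intersection theorem exactly as in Lemma~\ref{lemma:unifcontandlip}, \( C = \{ x \in \pre{\omega}{\omega} \mid \bigcap_n A_{x \restriction n} \neq \emptyset \} \) is closed and \( f \colon C \to X \), \( x \mapsto \) the unique point of \( \bigcap_n A_{x \restriction n} \), is a bijection (surjectivity: every \( w \in X \) determines a branch; injectivity: disjointness of the children). Identity~\eqref{eq:isom} then drops out: for distinct \( x,y \in C \), letting \( n \) be least with \( x(n) \neq y(n) \) (necessarily \( n < I \), so that \( \bar{d}(x,y) = 2^{-n} \)), the points \( f(x),f(y) \) lie in the common set \( A_{x \restriction n} \) but in the distinct children \( A_{x \restriction (n+1)} \), \( A_{y \restriction (n+1)} \), so \( d(f(x),f(y)) = t_n = \rho(2^{-n}) \) by the observation above — which is \eqref{eq:isom} read through \( f \).

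For the ``in particular'' clause, I would note that \( \rho \) extends to an order isomorphism of \( \{ 2^{-i} \mid i < I \} \cup \{ 0 \} \) with \( R(d) \cup \{ 0 \} \); combined with \eqref{eq:isom} this makes \( g \mapsto f \circ g \circ f^{-1} \) and \( h \mapsto f^{-1} \circ h \circ f \) mutually inverse bijections between the \( \bar{d} \)-nonexpansive self-maps of \( C \) and the \( d \)-nonexpansive self-maps of \( X \); hence \( A \mapsto f^{-1}(A) \) commutes with complementation and satisfies \( A \leq_{\L(d)} B \iff f^{-1}(A) \leq_{\L(\bar{d})} f^{-1}(B) \), i.e.\ it is an isomorphism of \( (\pow(X),\leq_{\L(d)},\neg) \) with \( (\pow(C),\leq_{\L(\bar{d})},\neg) \). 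The only genuinely new ingredient beyond Lemma~\ref{lemma:unifcontandlip} is the indexing that matches the levels of the scheme to the values of \( R(d) \); the subtlety I anticipate having to be careful about is the finite case at the bottom level \( \leng(s) = I-1 \), where \( A_s \) is a closed ball of radius \( \min R(d) \) and hence a \emph{discrete}, therefore countable, subspace of \( X \) that is necessarily contained in \( Q \) — this is exactly what guarantees that enumerating the singletons of \( Q \cap A_s \) already exhausts \( A_s \), so that \eqref{eq:isom} holds there too.
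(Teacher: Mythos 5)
Your proof is correct and takes essentially the same approach as the paper's: you build the Luzin scheme with ball radii taken from \( R(d) \) (your closed balls \( \bar{B}_d(\cdot,t_{n+1}) \) coincide with the paper's open balls \( B_d(\cdot,t_n) \) since \( R(d) \cup \{0\} \) is discrete), check the same diameter and disjointness properties, and read off \eqref{eq:isom} and the isomorphism of degree structures in the same way. The care you take with the finite case — that a closed ball of radius \( \min R(d) \) is discrete and therefore contained in any dense set \( Q \) — correctly handles the one point where the construction terminates.
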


\begin{proof}
Let us first assume that \( I = \omega \), i.e.\ that \( R(d) \) is a descending (\(\omega \)-)sequence converging to \( 0 \). Inductively define the family 
\( (A_s)_{s \in \pre{< \omega}{\omega}} \) of subsets of \( X \) by induction on \( \leng(s) \) as follows. Set 
\( A_\emptyset = X \). Then let \( \{ B_{s,j} \mid j < J \} \) (for some  \( J \leq \omega \)) be an enumeration without repetitions 
of the collection \( \{ B_d(x, \rho(2^{-\leng(s)})) \mid x \in A_s \} \), and set 
\( A_{s {}^\smallfrown{} j} = B_{s,j} \) if \( j < J \) and \( A_{s {}^\smallfrown{}  j} = \emptyset \) otherwise. It is easy to check that the family \( (A_s)_{s \in \pre{< \omega}{\omega}} \) is a Luzin scheme with vanishing diameter consisting of clopen sets. Hence letting \( C = \{ x \in \pre{\omega}{\omega} \mid \bigcap_{n \in \omega} A_{x \restriction n} \neq \emptyset \} \) and \( f \colon C \to X \) be defined by letting \( f(x) \) be the unique element of \( \bigcap_{n \in \omega} A_{x \restriction n} \), we get that \( C \) and \( f \) are as required.

Assume now that \( I \) is finite, so that, in particular, \( X \) is a discrete space. Inductively define the sets \( A_s \) as above for all \( s \in \pre{< \omega}{\omega} \) of length \( \leq I \). Then if \( \leng(s) = I \) the set \( A_s \) is either empty or a singleton. Letting \( C = \{ s {}^\smallfrown{}  0^{(\omega)} \mid \leng(s) = I, A_s \neq \emptyset  \} \) and defining \( f \colon C \to X \) by letting \( f(s {}^\smallfrown{} 0^{(\omega)}) \) be the unique element of \( A_s \) we again have that \( C \) and \( f \) are as required.

For the last part, notice that  the map \( \pow(X) \to \pow(C) \colon A \mapsto f^{-1}(A) \) is the desired isomorphism. To see this, simply notice that~\eqref{eq:isom} implies that \( \L(d) = \{ f \circ h \circ f^{-1} \mid h \in \L(\bar{d}_C) \} \).
\end{proof}

\begin{theorem}\label{th:descendingdistances}
Suppose that \( X = (X,d) \) is an ultrametric Polish space such that \( R(d) \) is either finite or a descending  (\( \omega \)-)sequence converging to \( 0 \). Then the \( \L(d) \)-hierarchy \( \Deg_{\boldsymbol{\Delta}^1_1}(\L(d)) \) on Borel subsets of \( X \) is very good.
\end{theorem}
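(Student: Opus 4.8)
The plan is to transfer the statement to the classical Baire-space setting by means of Proposition~\ref{prop:descendingdistances}. First I would invoke that proposition, whose hypothesis is exactly the one of the present theorem, to fix a closed set \( C \subseteq \pre{\omega}{\omega} \) and a bijection \( f \colon C \to X \) for which \( A \mapsto f^{-1}(A) \) is an isomorphism between \( (\pow(X), \leq_{\L(d)}, \neg) \) and \( (\pow(C), \leq_{\L(\bar d_C)}, \neg) \); here we may assume \( X \neq \emptyset \) (the empty space being trivial), so that \( C \neq \emptyset \) as well. The identity~\eqref{eq:isom} shows that both \( f \) and \( f^{-1} \) are uniformly continuous, hence \( f \) is a homeomorphism, and so \( A \subseteq X \) is Borel if and only if \( f^{-1}(A) \subseteq C \) is Borel. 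Since \( A \mapsto f^{-1}(A) \) also commutes with complementation, it restricts to an isomorphism between the structure of Borel subsets of \( X \) ordered by \( \leq_{\L(d)} \) and equipped with \( \neg \), and the corresponding structure on \( C \) with \( \leq_{\L(\bar d_C)} \); in particular \( \SLO^{\L(d)} \) for Borel subsets of \( X \) is equivalent to \( \SLO^{\L(\bar d_C)} \) for Borel subsets of \( C \), and likewise for well-foundedness of the induced degree-orderings. It therefore suffices to show that the \( \L(\bar d_C) \)-hierarchy on Borel subsets of \( C \) is very good.

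For this last step I would reuse the retraction argument already exploited in the proof of Theorem~\ref{th:unifcontandlip}. Since \( C \) is a nonempty closed subset of the Baire space, by~\cite[Proposition 2.8]{Kechris:1995} (as recalled before Corollary~\ref{cor:retraction}) there is a nonexpansive retraction \( g \colon (\pre{\omega}{\omega}, \bar d) \twoheadrightarrow (C, \bar d_C) \). The inclusion map \( \iota \colon (C, \bar d_C) \hookrightarrow (\pre{\omega}{\omega}, \bar d) \) is obviously nonexpansive and satisfies \( g \circ \iota = \id_C \), so \( \iota \) is a nonexpansive right inverse of the surjection \( g \). As nonexpansive functions are Borel and the \( \L(\bar d) \)-hierarchy on Borel subsets of \( \pre{\omega}{\omega} \) is very good by~\cite{Wadge:1983}, Lemma~\ref{lemma:retraction} applied with \( \F = \L \) (and \( X = \pre{\omega}{\omega} \), \( Y = C \)) yields that the \( \L(\bar d_C) \)-hierarchy on Borel subsets of \( C \) is very good, which completes the argument.

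I do not expect a genuine obstacle here, since the delicate comparison between the metrics \( d \) and \( \bar d \) has already been carried out in Proposition~\ref{prop:descendingdistances}; the only points deserving a moment's care are (i) observing that the isomorphism supplied by that proposition preserves Borelness --- which is where the continuity of \( f \), hence~\eqref{eq:isom}, is really used --- and that, being an isomorphism of the structures \( (\pow(\cdot), \leq, \neg) \), it transports both semi-linear ordering and well-foundedness; and (ii) quoting the classical fact that the nonexpansive (i.e.\ Lipschitz-with-constant-one) hierarchy on Borel subsets of the Baire space is semi-well-ordered, together with the standard nonexpansive retraction of \( \pre{\omega}{\omega} \) onto a nonempty closed subspace. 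Everything else is bookkeeping already set up in Section~\ref{sec:ultrametric} and in the proof of Theorem~\ref{th:unifcontandlip}.
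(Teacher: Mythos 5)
Your argument is correct and is essentially the paper's own proof: transfer via Proposition~\ref{prop:descendingdistances}, then apply the nonexpansive retraction of \( (\pre{\omega}{\omega}, \bar d) \) onto \( (C, \bar d_C) \) together with Lemma~\ref{lemma:retraction} and the fact that \( \Deg_{\boldsymbol{\Delta}^1_1}(\L(\bar d)) \) is very good. You merely make explicit a couple of steps the paper leaves implicit (that the isomorphism from Proposition~\ref{prop:descendingdistances} preserves Borelness, and that it transports semi-linearity and well-foundedness), which is fine.
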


\begin{proof}
By Proposition~\ref{prop:descendingdistances}, it is clearly enough to show that the \( \L(\bar{d}_C) \)-hierarchy on Borel subsets of \( C \) is very good: but this easily follows from the existence of a nonexpansive retraction of \( (\pre{\omega}{\omega}, \bar{d}) \) onto \( (C, \bar{d}_C) \), Lemma~\ref{lemma:retraction}, and the fact that the \( \L(\bar{d}) \)-hierarchy on the Borel subsets of \( \pre{\omega}{\omega} \) is very good.
\end{proof}

\begin{corollary} \label{cor:specialconvergingto0}
Let \( \phi \colon \omega \to \RR^+  \) and suppose that \( \rg(\phi) \) is finite (so that trivially \( \inf \rg(\phi) > 0 \)).
Then the \( \L(d_\phi) \)-hierarchy \( \Deg_{\boldsymbol{\Delta}^1_1}(\L(d_\phi)) \) on Borel subsets of \( \pre{\omega}{\omega} \) is very good.
\end{corollary}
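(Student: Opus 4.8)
The plan is to reduce everything to Theorem~\ref{th:descendingdistances}, which already settles the case of ultrametric Polish spaces whose set of nonzero distances is finite or a descending \( \omega \)-sequence converging to \( 0 \). So the whole task is to verify that \( R(d_\phi) \) has exactly one of these two forms when \( \rg(\phi) \) is finite. First I would note that \( (\pre{\omega}{\omega}, d_\phi) \) is genuinely an ultrametric Polish space: this is the remark following Definition~\ref{def: d_phi}, whose hypothesis \( \inf \rg(\phi) > 0 \) is automatic here since \( \rg(\phi) \) is a finite subset of \( \RR^+ \), so \( \inf \rg(\phi) = \min \rg(\phi) > 0 \).

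Next comes the key (elementary) computation of \( R(d_\phi) \). By definition \( d_\phi(x,y) = \max \{ \phi(x(0)), \phi(y(0)) \} \cdot \bar{d}(x,y) \), and for distinct \( x,y \) we have \( \bar{d}(x,y) \in \{ 2^{-n} \mid n \in \omega \} \) while \( \max \{ \phi(x(0)), \phi(y(0)) \} \in \rg(\phi) \); hence \( R(d_\phi) \subseteq S := \{ c \cdot 2^{-n} \mid c \in \rg(\phi),\ n \in \omega \} \). Moreover \( R(d_\phi) \) is infinite: fixing any \( i_0 \in \omega \), the points of the form \( i_0 {}^\smallfrown{} x \) with varying tails realize every distance \( \phi(i_0) \cdot 2^{-n} \) for \( n \geq 1 \). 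The only point that needs an actual (still trivial) argument is that, enumerated in strictly decreasing order, \( R(d_\phi) \) is an \( \omega \)-sequence converging to \( 0 \). This follows because for every \( \varepsilon \in \RR^+ \) the set \( S \), and a fortiori \( R(d_\phi) \), contains only finitely many elements \( \geq \varepsilon \): for each of the finitely many \( c \in \rg(\phi) \) there are only finitely many \( n \) with \( c \cdot 2^{-n} \geq \varepsilon \). Thus \( R(d_\phi) \) is a countably infinite set of strictly positive reals that is eventually below any given threshold, so listing it in decreasing order yields a descending \( \omega \)-sequence converging to \( 0 \). Theorem~\ref{th:descendingdistances} then applies verbatim and gives that \( \Deg_{\boldsymbol{\Delta}^1_1}(\L(d_\phi)) \) is very good.

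I do not expect a genuine obstacle here; the only subtlety worth flagging is precisely the order-type check above, since a priori a set of the form \( \{ c \cdot 2^{-n} \} \) can interleave several geometric scales (for instance \( \rg(\phi) = \{ 1, 3 \} \) produces \( 3 > 2 > \tfrac{3}{2} > 1 > \tfrac{3}{4} > \tfrac{1}{2} > \dots \)), so one should not be misled into thinking the decreasing enumeration is simply \( c_0 \cdot 2^{-n} \) for a single constant. Finiteness of \( \rg(\phi) \) is exactly what rules out any accumulation point other than \( 0 \), so the reversed order type is still \( \omega \); everything else is a direct invocation of Theorem~\ref{th:descendingdistances}.
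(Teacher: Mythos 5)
Your proposal is correct and follows the same route as the paper: both reduce immediately to Theorem~\ref{th:descendingdistances} after observing that \( R(d_\phi) \) is a descending \( \omega \)-sequence converging to \( 0 \) when \( \rg(\phi) \) is finite. The paper leaves this observation as a one-liner; you supply the (correct) elementary check, including the worthwhile caveat that the decreasing enumeration may interleave several geometric scales.
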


\begin{proof}
Simply observe that under our assumptions the set \( R(d_\phi) \) is always an \( \omega \)-sequence converging to \( 0 \), and then apply Theorem~\ref{th:descendingdistances}.
\end{proof}

Let us now consider the general problem of determining the character of the \( \L(d_\phi) \)-hierarchy on Borel subsets of \( \pre{\omega}{\omega} \) for an arbitrary \( \phi \colon \omega \to \RR^+ \) with \(\inf \rg (\phi) >0\). By Corollary~\ref{cor:specialincreasing}, if \( \rg(\phi) \) contains
an increasing \(\omega\)-sequence, then \( \Deg_{\boldsymbol{\Delta}^1_1}\) \((\L(d_\phi)) \) is very bad, hence we can assume without loss of generality that \( \rg(\phi) \) has order type%
\footnote{Given a linear order \( L = (L, \leq) \), we denote by \( L^* \) the reverse linear oder induced by \( L \), i.e.\ \( L^* = (L,\leq^{-1}) \). Since \( \alpha = \{ \beta \mid \beta < \alpha \} \) (for every ordinal \(\alpha\)), we tacitly identify \(\alpha\) with the linear order \( \alpha =  (\alpha, \leq) \), so that \( \alpha^* = (\alpha, \geq) \).}
 \( \alpha^* \) for some countable ordinal \(\alpha\). Corollary~\ref{cor:specialconvergingto0} considered the subcase where \( \alpha \) is finite:
 the next proposition considers instead the special (but yet significant) subcase where \( \alpha=\omega \) and \( \phi \) is injective. 

\begin{notation}
Given a set \( A \subseteq \pre{\omega}{\omega} \) and a finite sequence \( s \in \pre{< \omega}{\omega} \), let \( s {}^\smallfrown{} A = \{ s {}^\smallfrown{} x \mid x \in A \} \). When \( \leng(s) = 1 \), we simplify the notation by setting \( n {}^\smallfrown{} A = \langle n \rangle {}^\smallfrown{} A \), and
with a little abuse of notation we set \( r {}^\smallfrown{} A = \{ r {}^\smallfrown{} x \mid x \in A \} \subseteq \{ r \} \times \pre{\omega}{\omega} \) also when \( r \) is not a natural number.
Finally, given a family \( (A_n)_{n \in \omega} \) of subsets of \( \pre{\omega}{\omega} \), we set \( \bigoplus_{n \in \omega} A_n = \bigcup_{n \in \omega} n {}^\smallfrown{} A_n \).
\end{notation}

\begin{theorem} \label{prop:specialalpha*} 
Let \( \phi \colon \omega \to \RR^+ \) be such that \( \inf \rg(\phi) > 0 \), and suppose that \(\phi \) is injective and that \( \rg(\phi) \) has order type \( \omega^* \). Then the \( \L(d_\phi) \)-hierarchy \( \Deg_{\boldsymbol{\Delta}^1_1}(\L(d_\phi)) \) on the Borel subsets of \( \pre{\omega}{\omega} \) is very good.
\end{theorem}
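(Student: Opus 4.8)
The plan is to normalise the space, analyse its nonexpansive self-maps combinatorially, and then read off $\SLO^{\L(d_\phi)}$ and well-foundedness from the fact (\cite{MottoRos:2010}) that the $\L(\bar{d})$-hierarchy on the Borel subsets of $\pre{\omega}{\omega}$ is very good. First I would normalise. Enumerate $\rg(\phi) = \{ r_n \mid n \in \omega \}$ with $r_0 > r_1 > \cdots$ and put $r_\infty = \inf\rg(\phi) > 0$, so $r_n \downarrow r_\infty$. Since $\phi$ is injective and the $r_k$ are distinct, the map $\sigma\colon\omega\to\omega$ determined by $r_{\sigma(n)} = \phi(n)$ is a bijection, and a direct computation shows $n{}^\smallfrown{}x \mapsto \sigma(n){}^\smallfrown{}x$ to be an isometry of $(\pre{\omega}{\omega},d_\phi)$ onto $(\pre{\omega}{\omega},d_{\phi_0})$ where $\phi_0(n)=r_n$; as multiplying the metric by a positive constant does not affect $\L$-reducibility, we may assume $\phi(n)=r_n$ with $r_n\downarrow 1$. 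Now $X := (\pre{\omega}{\omega},d_\phi)$ is the disjoint union $\bigsqcup_n B_n$ of its clopen balls $B_n = \{ x \mid x(0)=n\}$, each $B_n$ isometric via $n{}^\smallfrown{}x\mapsto x$ to $(\pre{\omega}{\omega},\frac{r_n}{2}\bar{d})$, with $d_\phi(u,v) = r_{\min\{n,m\}}$ whenever $u\in B_n$, $v\in B_m$, $n\neq m$. Every Borel $A\subseteq X$ is $\bigoplus_n A_n$ for uniquely determined Borel $A_n\subseteq\pre{\omega}{\omega}$, using the operation $\bigoplus$ introduced above.

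Next I would analyse nonexpansive self-maps. Since $\operatorname{diam}(B_n) = r_n/2$ and two distinct layers lie at distance $\geq 1$, a nonexpansive $f\colon X\to X$ can send a single $B_n$ to points of two different layers only if $r_n > 2$, which happens for finitely many $n$; thus for every $n$ with $r_n\leq 2$ there is a single $m=c(n)$ with $f(B_n)\subseteq B_{c(n)}$, and the induced map of $\pre{\omega}{\omega}$ into itself is Lipschitz with constant $r_n/r_{c(n)} < 2$, hence nonexpansive (a map $\pre{\omega}{\omega}\to\pre{\omega}{\omega}$ Lipschitz with a constant below $2$ is automatically nonexpansive, since $\bar{d}(gx,gy) < 2^{-k+1}$ forces $\bar{d}(gx,gy)\leq 2^{-k}$), and even contracting when $c(n) < n$. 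For the finitely many $n$ with $r_n > 2$, $f(B_n)$ is either contained in one layer or spread over a tail block $\bigsqcup_{m\geq k}B_m$ with $k>n$. A look at the cross-layer constraints shows moreover that on the region where $r_n\leq 2$ one has $\min\{c(n),c(m)\}\geq n$ whenever $n<m$ and $c(n)\neq c(m)$, so there $c$ satisfies $c(n)\geq n$ for all such $n$ or else is eventually constant. Dually, one determines which admissible layer assignments together with families of within-layer nonexpansive (or contracting) Baire-space maps glue into a global nonexpansive reduction. The upshot is a description of $\leq_{\L(d_\phi)}$ purely in terms of the Baire-space reducibility $\leq_{\L(\bar{d})}$ between the respective pieces of the two sets and the finite combinatorics coming from the layers with $r_n > 2$.

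For $\SLO^{\L(d_\phi)}$, fix Borel $A = \bigoplus_n A_n$ and $B = \bigoplus_n C_n$; after dealing with the trivial cases $A,B\in\{\emptyset,X\}$, I would use the very-good-ness of the Baire-space Lipschitz hierarchy to compare, along a carefully chosen matching of indices, each $A_n$ with a suitable $C_m$ (or $\neg C_m$), and then assemble a global nonexpansive reduction of $A$ to $B$ or of $\neg B$ to $A$, exploiting the fact that there are infinitely many ``deep'' layers into which the leftover pieces can be routed at no extra cost (the ratios $r_n/r_m$ being close to $1$ there, and the layer assignments being allowed to move upward). For well-foundedness, I would attach to each Borel $A\subseteq X$ the supremum of the Baire-space Lipschitz ranks of its pieces $A_n$ (corrected by a finite amount accounting for the layers with $r_n>2$) and check, using the structural analysis, that a nonexpansive reduction does not raise this invariant while a strict one lowers it; alternatively, mimicking the proof of Theorem~\ref{th:notunbounded}, I would apply Ramsey's theorem to a putative infinite $\leq_{\L(d_\phi)}$-descending chain to produce an infinite descending chain or an infinite antichain among Borel subsets of a single layer $B_n\cong(\pre{\omega}{\omega},\frac{r_n}{2}\bar{d})$, contradicting~\cite{MottoRos:2010}.

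The main obstacle is the structural analysis of nonexpansive self-maps together with the gluing criterion for $\leq_{\L(d_\phi)}$: the cross-layer distances make the behaviour of $f$ on all layers interdependent, the ``collapse into a deep block'' option available to the finitely many layers with $r_n>2$ has to be treated by an additional (but finitely deep) case analysis, and in the $\SLO$ step one must ensure that enough deep layers with favourable ratios survive to absorb the pieces while the reduction is still realised. Once this combinatorial picture is secured, well-foundedness should follow routinely by reduction to the Baire-space case.
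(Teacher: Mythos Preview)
Your layer-by-layer setup is correct and is the right starting point: after normalising to $r_n \downarrow 1$, nonexpansive self-maps do act layer-to-layer on all but finitely many ``fat'' layers, and on the thin layers the induced Baire-space maps are nonexpansive (Lipschitz constant $<2$ in $\bar{d}$ forces nonexpansive), contracting when $c(n)<n$. The constraint $\min\{c(n),c(m)\}\geq n$ for $n<m$ is also right. But the heart of the argument---turning this picture into $\SLO^{\L(d_\phi)}$ and well-foundedness---is where your proposal stops being a proof and becomes a wish.

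Your claimed ``description of $\leq_{\L(d_\phi)}$ purely in terms of Baire-space reducibility between the respective pieces and finite combinatorics'' understates what is needed. A piecewise comparison via $\leq_{\L(\bar{d})}$ and a layer map $c$ is not enough: one must exploit the fine structure of the $\L(\bar{d})$-hierarchy---that $\L(\bar{d})$-nonselfdual sets are self-contractible (so can be squeezed into \emph{any} layer using $\inf\rg(\phi)>0$), that every $\Lip(\bar{d})$-selfdual degree decomposes as $\bigcup_n[0^{(n)}{}^\smallfrown A']_{\L(\bar{d})}$, and the $\mathscr{L}^{(\alpha)}$-structure of selfdual blocks. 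The paper makes this precise by first proving a \emph{normal-form} claim: every Borel set is $\L(d_\phi)$-equivalent to one of a short explicit list (a $<_{\Lip(\bar d)}$- or $<_{\L(\bar d)}$-increasing sequence spread over decreasing layers; a single nonselfdual piece at one layer, or paired with its complement at two layers, or alternating over a full descending sequence; a single selfdual piece at one layer or repeated over a descending sequence). Only after this reduction does $\SLO$ become a finite case analysis, and even then it is delicate: e.g.\ comparing $r^\smallfrown A$ with $s^\smallfrown B$ when $A\equiv_{\Lip(\bar d)}B$ are selfdual and $A\equiv_{\L(\bar d)}0^{(n)}{}^\smallfrown B$ hinges on whether $s\cdot 2^n\lessgtr r$, which your ``matching of indices'' does not capture.

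Neither well-foundedness sketch works as stated. The Ramsey argument from Theorem~\ref{th:notunbounded} relied on a single bounded-diameter core $X'$ with $A\leq B\iff A\cap X'\leq B\cap X'$; here no single layer plays that role, so a descending $\L(d_\phi)$-chain need not project to anything bad in any fixed $B_n$. The rank idea (supremum of Baire-space Lipschitz ranks of the pieces, plus a finite correction) is closer to what the paper does, but the paper's invariant is defined case-by-case from the normal form---$\bigoplus_n A_n$ in types (1)--(2), $A$ or $A\oplus\neg A$ in type (3), $A$ in type (4)---and still requires passing to subsequences; a bare supremum will not separate, for instance, the sets $r^\smallfrown A$ for varying $r$ with the same selfdual $A$.
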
 

\begin{proof}
Using the usual game-theoretic arguments (see e.g.\ \cite{Andretta:2007}), it is easy to see that if a Borel \( A \subseteq \pre{\omega}{\omega} \) is \( \L(\bar{d}) \)-selfdual, then its \( \L(\bar{d}) \)-degree \( [A]_{\L(\bar{d})} \) is followed by an \( \omega_1 \)-chain of \( \L(\bar{d}) \)-selfdual degrees \( (\mathscr{L}^{(\alpha)} [A]_{\L(\bar{d})})_{\alpha < \omega_1} \), where the \( \mathscr{L}^{(\alpha)} [A]_{\L(\bar{d})} \) are recursively defined as follows:
\begin{enumerate}[(i)]
\item
\( \mathscr{L}^{(0)} [A]_{\L(\bar{d})} = [A]_{\L(\bar{d})} \);
\item
\( \mathscr{L}^{(\alpha+1)} [A]_{\L(\bar{d})} = [0 {}^\smallfrown{} C]_{\L(\bar{d})} \) for some/any \( C \in \mathscr{L}^{(\alpha)} [A]_{\L(\bar{d})} \);
\item
for limit \( \alpha \)'s, \( \mathscr{L}^{(\alpha)} [A]_{\L(\bar{d})} = [ \bigoplus_{n \in \omega} C_n ]_{\L(\bar{d})} \), where \( C_n \in \mathscr{L}^{(\alpha_n)} [A]_{\L(\bar{d})} \) for each \( n \in \omega \), and \( (\alpha_n)_{n \in \omega} \) is some/any increasing sequence cofinal in \(\alpha\).
\end{enumerate}

We will use the following known facts about the Baire space $(\pre{\omega}{\omega},\bar{d})$.
\begin{enumerate}[\( \bullet \)] 
\item 
A set $A$ is \emph{self-contractible} (i.e.\ reducible to itself via a contraction) if and only if it is $\L(\bar{d})$-nonselfdual; in this case the iterates of the contraction are reductions of $A$ to itself and have a unique common fixed point (see~\cite[Corollary 4.4]{MottoRos:2012c}). 
\item 
The $\L(\bar{d})$-nonselfdual degrees coincide with the $\W(\bar{d})$-nonselfdual degrees (see e.g.~\cite[Theorem 3.1]{VanWesep:1978}). 
\item 
Every $\Lip(\bar{d})$-selfdual degree \( [A]_{\Lip(\bar{d})} \) is of the form $\bigcup \{[0^{(n)} {}^\smallfrown{}  A']_{\L(\bar{d})} \mid {n<\omega}\}$ for some \( \L(\bar{d}) \)-selfdual set $A'$; if instead \( [A]_{\Lip(\bar{d})} \) is \( \Lip(\bar{d}) \)-nonselfdual, then \( [A]_{\Lip(\bar{d})} = [A]_{\L(\bar{d})} \)  (see~\cite{MottoRos:2010}). 
\item 
If $A<_{\Lip(\bar{d})} B$, then for all $\varepsilon \in \RR^+$ there is a Lipschitz reduction of $A$ to $B$ with constant $\varepsilon$ (see the end of Section 4 in~\cite{MottoRos:2012c}). 
\item 
Let \( \W  = \W(\pre{\omega}{\omega}) \) be the set of all continuous functions from \( \pre{\omega}{\omega} \) into itself, which is clearly a reducibility. Then every $\W$-selfdual degree \( [A]_\W \) is of the form $\bigcup \{ \mathscr{L}^{(\alpha)}([A']_{\L(d)}) \mid \alpha < \omega_1 \}$ for some \( \L(\bar{d}) \)-selfdual set $A'$  (see e.g.\ \cite{Andretta:2007}).
\end{enumerate} 

Note that $(\pre{\omega}{\omega}, d_{\phi})$ is isometric to the space $Y=\bigcup_{r \in \rg(\phi)}  r^\smallfrown  \pre{\omega}{\omega} $ equipped with the ultrametric (which with a little abuse of notation will be denoted by \( d_\phi \) again)
\[ 
d_{\phi} (r^\smallfrown x,s^\smallfrown y) = 
\begin{cases}
0 & \text{if } r=s \text{ and }x=y, \\
\max\{r,s\} & \text{if }r \neq s, \\ 
r \cdot 2^{-(n+1)} & \text{if } r=s \text{ and } n \text{ is least such that } x(n) \neq y(n). 
\end{cases} 
 \] 

\begin{claim} \label{claim:normalforms}
Every Borel subset \( \bar{C} \) of $Y = (Y, d_{\phi})$ is $\L(d_\phi)$-equivalent to one of the following \emph{(\(\L(d_\phi) \)-)normal forms \( \bar{A} \)} (where in what follows $A,A_n \subseteq \pre{\omega}{\omega}$ and \( < \) is the usual order on the reals): 
\begin{enumerate} 
\item 
$\bar{A}=\bigcup_{n\in\omega} r_n^\smallfrown A_n$, where the sequence of the \( A_n \)'s  is $<_{\Lip(\bar{d})}$-increasing and the sequence $(r_n)_{n \in \omega}$ in $\rg(\phi)$ is strictly \( < \)-decreasing. 
\item 
$\bar{A}=\bigcup_{n\in\omega} r_n^\smallfrown A_n$, where the sequence of the \( A_n \)'s  is $<_{\L(\bar{d})}$-increasing, $A_m\equiv_{\Lip(\bar{d})} A_n$ for all $m,n \in \omega$, and the sequence $(r_n)_{n \in \omega}$ in $\rg(\phi)$ is strictly \( < \)-decreasing.  
\item 
$A$ is $\L(\bar{d})$-nonselfdual and 
\begin{enumerate} 
\item 
$\bar{A}=r^\smallfrown A $ for some $r \in \rg(\phi)$, or 
\item 
$\bar{A}=(r_0^\smallfrown A)\cup (r_1^\smallfrown (\neg A))$ for some $r_0, r_1 \in \rg(\phi)$ with $r_0>r_1$, or 
\item 
\( \bar{A}=\bigcup_{i \in \omega} r_{2i} {}^\smallfrown{}  A \cup \bigcup_{i \in \omega} r_{2i+1} {}^\smallfrown{}  (\neg A) \) for some  strictly \( < \)-decreasing sequence $(r_n)_{n \in \omega}$ in $\rg(\phi)$. 
\end{enumerate} 
\item 
$A$ is $\mathsf{L}(\bar{d})$-selfdual and 
\begin{enumerate} 
\item $\bar{A}=r {}^\smallfrown{}  A $ for some $r\in \rg(\phi)$, or 
\item \( \bar{A}=\bigcup_{n\in\omega} r_n^\smallfrown A \)  for some  strictly \( < \)-decreasing sequence $(r_n)_{n \in \omega}$ in $\rg(\phi)$. 
\end{enumerate} 
\end{enumerate} 
\end{claim}

\begin{proof}[Proof of the Claim]
Let us sketch how to obtain these normal forms. We will often use the following easy fact. Let \( D \subseteq \rg(\phi) \), \( \rho \colon D \to \rg(\phi) \) be a non-\( < \)-increasing map, \( \{ f_r \colon \pre{\omega}{\omega} \to \pre{\omega}{\omega} \mid r \in D \} \subseteq \L(\bar{d}) \), and \( f' \colon \bigcup_{r \in \rg(\phi) \setminus D} (r {}^\smallfrown{} \pre{\omega}{\omega}) \to Y \) be a nonexpansive map (with respect to \( d_\phi \)): then the map \( f \colon Y \to Y \) defined by
\[ 
f(r {}^\smallfrown{} x) = 
\begin{cases}
\rho(r) {}^\smallfrown{}  f_r(x) & \text{if } r \in D \\
f'(r {}^\smallfrown{} x) & \text{otherwise}
\end{cases}%
 \] 
is in \( \L(d_\phi) \).

Now let $\bar{C}=\bigcup_{r\in \rg(\phi)} (r^\smallfrown C_r)$ be an arbitrary Borel subset of $(Y, d_{\phi})$, and set $\mathcal{C}=\{C_r\mid r\in \rg(\phi)\}$, so that each \( C_r \) is a Borel subset of \( \pre{\omega}{\omega} \). 
If $\mathcal{C}$ has no  $\mathsf{Lip}(\bar{d})$-maximal element, choose a strictly \( < \)-decreasing sequence $(r_n)_{n\in\omega}$ in $\rg(\phi)$ such that $(C_{r_n})_{n\in\omega}$ is strictly $<_{\mathsf{Lip}(d_{\phi})}$-increasing and $<_{\mathsf{Lip}(d_{\phi})}$-cofinal in $\mathcal{C}$. Then $\bar{A}=\bigcup_{n\in\omega} C_{r_n}$ is in the normal form (1), and moreover it is easy to see that $\bar{A}\equiv_{\mathsf{L}(d_{\phi})}\bar{C}$. 
Otherwise, if $\mathcal{C}$ has a $\mathsf{Lip}(\bar{d})$-maximal element but no $\mathsf{L}(\bar{d})$-maximal element, then we can similarly find a set $\bar{A}$ in the normal form (2) which is $\mathsf{L}(d_{\phi})$-equivalent to $\bar{C}$. 

Now suppose that there is an  $\mathsf{L}(\bar{d})$-maximal element \( B \) among the sets in $\mathcal{C}$. Suppose first that \( B \) is $\mathsf{L}(\bar{d})$-nonselfdual. 

If there is no $C\in \mathcal{C}$ with $C\equiv_{\mathsf{L}(\bar{d})} \neg B$, then we choose some $r\in \rg(\phi)$ with $C_r\equiv_{\mathsf{L}(\bar{d})} B$. Using the assumption $\inf \rg(\phi)>0$ and the fact mentioned at the beginning of the proof that $\mathsf{L}(\bar{d})$-nonselfdual sets are self-contractible with arbitrarily small Lipschitz constant, it follows that $\bar{A}=r^\smallfrown C_r\equiv_{\mathsf{L}(d_{\phi})} \bar{C}$, and $\bar{A}$ is in the normal form (3a). 
(For the nontrivial reduction, for each \( t \in \rg(\phi) \) choose a \( \L(\bar{d}) \)-reduction \( f_t \colon \pre{\omega}{\omega} \to \pre{\omega}{\omega} \) of \( C_t \) to \( C_r \), let \( \varepsilon \in \RR^+ \) be such that \( \max \rg(\phi) \cdot \varepsilon \leq \inf \rg(\phi) \), and let \( g \colon \pre{\omega}{\omega} \to \pre{\omega}{\omega} \) be a Lipschitz map with constant \( \varepsilon \) reducing \( C_r \) to itself. Define \( f \colon Y \to Y \) by setting \( f( t {}^\smallfrown{} x) = r {}^\smallfrown{}  g(f_t(x)) \) for every \( t \in \rg(\phi) \) and \( x \in \pre{\omega}{\omega} \): it is easy to check that \( f \in \L(d_\phi) \) reduces \( \bar{C} \) to \( \bar{A} \).)

If there is a \( < \)-minimal $s\in\rg(\phi)$ with $C_s\equiv_{\mathsf{L}(\bar{d})} \neg B$, let $r$ be either the \( < \)-minimal element of \( \rg(\phi) \) with $C_r\equiv_{\mathsf{L}(\bar{d})} B$, or the \( < \)-largest element of \( \rg(\phi) \) satisfying both $C_r\equiv_{\mathsf{L}(\bar{d})} B$ and \( r < s \).  
Then $\bar{A}=r^\smallfrown C_r\cup s^\smallfrown C_s$ is in the normal form (3b), and arguing as above one can check that $\bar{C}\equiv_{\mathsf{L}_{d(\phi)}} \bar{A}$ using the assumption $\inf \rg(\phi)>0$ and the previously mentioned fact about self-contractions. (For the nontrivial reduction, notice that we can assume without loss of generality that \( r < s \) (otherwise we simply switch the role of \( C_r \) and \( C_s \)). Let \( D = \{ t \in \rg(\phi) \mid C_t \equiv_{\L(\bar{d})} \neg B \} \), so that \( s = \min D \). For \( t \in \rg(\phi) \), set \( \rho(t)  =  s \) if \( t \in D \)  and \( \rho(t) = r \) otherwise. Let \( f_t \) be a \( \L(\bar{d}) \)-reduction of \( C_t \) to \( C_s \) if \( t \in D \) and of \( C_t \) to \( C_r \) otherwise. Let \( \varepsilon \) and \( g \) be as above. Then the map \( f \colon Y \to Y \) defined by \( f(t {}^\smallfrown{} x) = s {}^\smallfrown{}  f_t(x) \) if \( t \in D \) and \( f(t {}^\smallfrown{} x) = r {}^\smallfrown{} g(f_t(x)) \) otherwise is an \( \L(d_\phi) \)-reduction of \( \bar{C} \) to \( \bar{A} \).)

If there are unboundedly many $s\in\rg(\phi)$ with $C_s\equiv_{\mathsf{L}(\bar{d})} \neg B$ and an \( < \)-minimal \( r \in \rg(\phi) \) with \( C_r \equiv_{\L(\bar{d})} B \), argue as in the previous paragraph switching the role of \( B \) and \( r \) with, respectively, \( \neg B \) and \( s \).

In the remaining case there are unboundedly many $r\in\rg(\phi)$ with $C_r\equiv_{\mathsf{L}(\bar{d})} B$ and unboundedly many $s\in\rg(\phi)$ with $C_s\equiv_{\mathsf{L}(\bar{d})} \neg B$. In this situation it is easy to see that $\bar{C}$ is $\mathsf{L}(d_{\phi})$-equivalent to a set $\bar{A}$ in the normal form (3c). 

Finally, suppose that $\mathcal{C}$ has a $\mathsf{L}(\bar{d})$-maximal element $B$ and that $B$ is \( \L(\bar{d}) \)-selfdual. It follows from the remarks at the beginning of the proof that there is an \( \L(\bar{d}) \)-nonselfdual set $A$ with $B\in\mathscr{L}^{(\lambda+n)}[A\oplus(\neg A)]_{\mathsf{L}(\bar{d})}$ for some $n\in\omega$ and $\lambda=0$ or $\lambda$ a countable limit ordinal. Set \( D = \{ r \in \rg(\phi) \mid C_r \in \bigcup_{j \in \omega} \mathscr{L}^{(\lambda+j)}[A\oplus(\neg A)]_{\mathsf{L}(\bar{d})}\} \), and
define the \emph{index} of any $r\in D$ as $i(r)=r\cdot 2^{-(j+1)}$, where \( j \) is the unique natural number such that $C_r\in \mathscr{L}^{(\lambda+j)}[A\oplus(\neg A)]_{\mathsf{L}(\bar{d})}$. Then for any $r,s\in D$ for which $i(r)\leq i(s)$ there is an \( \L(d_\phi) \)-map \( f \) such that $f(s^\smallfrown {}^\omega \omega) \subseteq r{}^\smallfrown {}^\omega \omega$ and \( f \) reduces $s^\smallfrown C_s$ to $r^\smallfrown C_r$.

Suppose first that there is \( j \leq n \) such that $C_{r_m}\in \mathscr{L}^{(\lambda+j)}[A\oplus(\neg A)]_{\mathsf{L}(\bar{d})}$ for some strictly \( < \)-descending sequence \( (r_m)_{m \in \omega} \) of distances in \( \rg(\phi) \), and let \( k \) be the largest of such \( j \)'s. If \( n = k \), then $\bar{A}=\bigcup_{m\in\omega} r_m^\smallfrown C_{r_m}$ is in the normal form (4b) and $\bar{A}\equiv_{\L(d_{\phi})}\bar{C}$. If \( n>k \), let \( r \) be \( < \)-smallest in \( \rg(\phi) \) such that \( C_r \equiv_{\L(\bar{d})} B \). If
\( \inf \rg(\phi) < r \cdot 2^{n-k} \), then using the fact that \( C_r \) is reducible to each of the \( C_{r_m} \)'s with some Lipschitz function with constant \( 2^{n-k} \) we get that \( \bar{C} \equiv_{\L(d_\phi)} \bar{C}' \), where \( \bar{C}' = \bar{C} \setminus \left ( \bigcup_{t \geq r} t {}^\smallfrown{} \pre{\omega}{\omega} \right ) \). Applying recursively this same procedure, after finitely many steps we will end up with a set \( \bar{C}^* \equiv_{\L(d_\phi)} \bar{C} \) such that either the \( C_{r_m} \) are \( \L(\bar{d}) \)-maximal in \( \mathcal{C}^* \), or else there is an \( < \)-smallest \( r \) such that \( C_r \) is \( \L(\bar{d}) \)-maximal in \( \mathcal{C}^* \), \( C_r \in \mathscr{L}^{(\lambda+n^*)}[A \oplus (\neg A)]_{\L(\bar{d})}\) for some \( k < n^* \leq n \), and \( r \cdot 2^{n^*-k} \leq \inf \rg(\phi) \). In the former case we
again easily get that $\bar{A}=\bigcup_{m\in\omega} r_m^\smallfrown C_{r_m}$ is in the normal form (4b) and $\bar{A}\equiv_{\mathsf{L}(d_{\phi})}\bar{C}^*  \equiv_{\L(d_\phi)} \bar{C}$. In the latter case, we get that $\bar{A} = r {}^\smallfrown{} C_r$ is in normal form (4a) and \( \bar{A} \equiv_{\mathsf{L}(d_\phi)} \bar{C}^* \equiv_{\L(d_\phi)} \bar{C} \). (To see that \( \bar{C}^* \leq_{\L(d_\phi)} \bar{A} \), which is the only nontrivial reduction, notice that we 
may assume without loss of generality that all the \( C_{r_m} \)'s equal a fixed set \( C \neq \pre{\omega}{\omega} \), that
 \( C_r = 0^{(n^*-k)} {}^\smallfrown{} C \), and that for \( t \notin \{ r \} \cup \{ r_m \mid m \in \omega \} \) either \( C_t = \emptyset \) or \( C_t = 0^{(i_t+1)} {}^\smallfrown{}  C \) for some \( i_t < n^*-k \). 
Fix \( t \in \rg(\phi) \). If \( t \geq r \) then
  let \( f_t \colon \pre{\omega}{\omega} \to \pre{\omega}{\omega} \) be a \( \L(\bar{d}) \)-reduction of \( C_t \) to \( C_r \). If  \( t = r_m \) for some \( m \in \omega \), define \( f_t \) by setting \( f_t(x) = 0^{(n^*-k)} {}^\smallfrown{} x \) for all \( x \in \pre{\omega}{\omega} \). Finally, if \( t < r \) and \( t \neq r_m \), then let \( f_t \) be a constant
 map with value \( 0^{(n^*-k)} {}^\smallfrown{}  y \) for some fixed \( y \notin C \) if \( C_t =\emptyset \), and otherwise set \( f_t(x) = 0^{(n^*-k -i_t - 1)} {}^\smallfrown{} x \) for all \( x \in \pre{\omega}{\omega} \). Then the map \( f \colon Y \to Y \) defined by setting
\( f(t {}^\smallfrown{} x) = r {}^\smallfrown{} f_t(x) \) for all \( t \in \rg(\phi) \) and \( x \in \pre{\omega}{\omega} \) is a \( \L(d_\phi) \)-reduction of \( \bar{C} \) to \( \bar{A} \).)

Therefore we may assume without loss of generality that \( D \) is finite. Actually, applying the standard arguments used above it is not difficult to see that we may also assume that there are \( m \in \omega \), a strictly \( < \)-decreasing sequence \( r_0, \dotsc, r_m \in \rg(\phi) \), and a strictly decreasing sequence \( n_0, \dotsc, n_m \in \omega \) such that:
\begin{itemize}
\item
\( C_{r_k} \in \mathscr{L}^{(\lambda+n_k)}[A \oplus(\neg A)]_{\L(\bar{d})} \) for all \( k \leq m \);
\item
\( i(r_k) < i(r_{k+1}) \) for all \( k < m \);
\item
\( C_t = \emptyset \) for all \( t \geq r_m \) which are not of the form \( r_k \) for some \( k \leq m \);
\item
\( C_t <_{\Lip(\bar{d})} C_{r_m} \) for all \( t < r_m \).
\end{itemize}

Assume first that \( \lambda > 0 \). Then without loss of generality we may assume that 
\( C_{r_k} = 0^{(n_k)} {}^\smallfrown{} \bigoplus_{l \in \omega} (0^{(l)} {}^\smallfrown{} C'_l) \) for all \( k \leq m \), where the \( C'_l \)'s are strictly \( \L(\bar{d}) \)-increasing
subsets of \( \pre{\omega}{\omega} \) such that their \( \L(\bar{d}) \)-degrees are cofinal below \( \mathscr{L}^{(\lambda)}[A \oplus (\neg A)]_{\L(\bar{d})} \). Notice that in this case \( i(r_k) \) measures the 
\( d_\phi \)-distance between each pair of subsets of \( C_{r_k} \) of the form \( 0^{(n_k)} {}^\smallfrown{} l {}^\smallfrown{}0^{(l)} {}^\smallfrown{}  C'_l \). Assume first
 that there is \( l \in \omega \) such that \( C_t \leq_{\L(\bar{d})} C'_l \) for all \( t < r_m \). Then it is not hard to see that 
\( \bar{A} = r_0 {}^\smallfrown{} C_{r_0} \) is in normal form (4a) and \( \bar{A} \equiv_{\L(d_\phi)} \bar{C} \). (An 
\( \L(d_\phi) \)-reduction \( f \) of \( \bar{C} \) to \( \bar{A} \) may be defined on sets of the form \( t {}^\smallfrown{}  \pre{\omega}{\omega} \) for \( t < r_m \) by
 fixing \( l'\geq l \) such that \( 2^{-l'} \leq \inf \rg(\phi) \) and an \( \L(\bar{d}) \)-reduction $f_t$ of \( C_t \) to \( C'_{l'} \), and then setting 
\( f(t {}^\smallfrown{}  x) = r_0 {}^\smallfrown 0^{(n_0)} {}^\smallfrown{}  l' {}^\smallfrown{} 0^{(l')} {}^\smallfrown{}  f_t(x) \);
for \( t \geq r_m \), the map \( f \) may be defined on 
\( t {}^\smallfrown{} \pre{\omega}{\omega} \) in the obvious way using the property of the \( i(r_k) \)'s mentioned above.) Now assume instead that the
 family \( \{ C_t \mid t < r_m \} \) is \( \L(\bar{d}) \)-cofinal below \( \bigoplus_{l \in \omega} C'_l \equiv_{\L(\bar{d})} \bigoplus_{l \in \omega} 0^{(l)} {}^\smallfrown C'_l \in \mathscr{L}^{(\lambda)}[A \oplus (\neg A)]_{\L(\bar{d})} \). Then using arguments similar to the one already
 applied, one gets that if \( i(r_0) \leq \inf \rg(\phi) \) then we can again set \( \bar{A} = r_0 {}^\smallfrown{} C_0 \), so that \( \bar{A} \) is in normal
 form (4a), and prove that \( \bar{A} \equiv_{\L(d_\phi)} \bar{C} \), while if \( i(r_0) > \inf \rg(\phi) \) then we may choose a strictly decreasing sequence
 \( (t_h)_{h \in \omega} \) so that \( t_0 < \min \{r_m, i(r_0) \} \) and the \( C_{t_h} \)'s are \(\leq_{\L(\bar{d})} \)-increasing, all in the same \( \Lip(\bar{d}) \)-degree,
 and cofinal below \( \bigoplus_{l \in \omega} 0^{(l)} {}^\smallfrown C'_l \), and then prove that \( \bar{A} = \bigcup_{h \in \omega} t_h {}^\smallfrown{} C_{t_h} \) is in
 normal form (2) and \( \L(d_\phi) \)-equivalent to \( \bar{C} \).

Finally, let $\lambda=0$. In this case we may assume without loss of generality that 
\( C_{r_k} = 0^{(n_k)} {}^\smallfrown{} (A \oplus \neg A) \) for all \( k \leq m \), and \( i(r_k) \) measures the distance between the copies of $A$ and $\neg A$ in $C_{r_k}$. Let us first suppose that there are arbitrarily small $r,s>\inf \rg(\phi)$ with $C_r\equiv_{\mathsf{L}(d_{\phi})}A$ and  $C_s\equiv_{\mathsf{L}(d_{\phi})} \neg A$. If $i(r_0) \leq \inf \rg(\phi)$, we let $\bar{A}=r_0^\smallfrown C_{r_0}$; then $\bar{A}$ is in the normal form 
(4a) and arguing as above we get $\bar{A}\equiv_{\mathsf{L}(d_{\phi})}\bar{C}$. If $i(r_0)>\inf \rg(\phi)$, we choose a strictly decreasing sequence $(t_h)_{h\in\omega}$ 
in $\rg(\phi)$ with \( t_0 < \min \{ r_m, i(r_0) \} \), $C_{t_{2p}}\equiv_{\mathsf{L}(\bar{d})} A$ and $C_{t_{2p+1}}\equiv_{\mathsf{L}(\bar{d})} \neg A$, and let $\bar{A}=\bigcup_{h\in\omega} t_h {}^\smallfrown{}  C_{t_h}$. Then $\bar{A}$ is in the normal form (3c) and, arguing as in the case \( \lambda > 0 \), we get $\bar{A}\equiv_{\mathsf{L}(d_{\phi})}\bar{C}$. 
Next, let us suppose that there are no $r,s<i(r_0)$ in $\rg(\phi)$ with $C_r \equiv_{\mathsf{L}(\bar{d})} A$ and $C_s \equiv_{\mathsf{L}(\bar{d})} \neg A$. Let $\bar{A}=r_0^\smallfrown C_{r_0}$. Then $\bar{A}$ is in the normal form (4a), and using the self-contractibility of \( A \) and \( \inf \rg(\phi) > 0 \) we again obtain $\bar{A}\equiv_{\mathsf{L}(d_{\phi})}\bar{C}$. 
Finally, suppose that there are $r,s<i(r_0)$ in $\rg(\phi)$ with $C_r \equiv_{\mathsf{L}(\bar{d})} A$ and $C_s \equiv_{\mathsf{L}(\bar{d})} \neg A$ and that there is an \( < \)-minimal $r\in\rg(\phi)$ with $C_r \equiv_{\mathsf{L}(\bar{d})} A$ (the analogous situation in which there is a minimal $r\in\rg(\phi)$ with $C_r\equiv_{\mathsf{L}(\bar{d})} \neg A$ can be treated similarly). We consider the \( < \)-smallest $s\in\rg(\phi)$ with $C_s\equiv_{\mathsf{L}(\bar{d})} \neg A$ if this exists, and any $s\in\rg(\phi)$ with $s<r$ and $C_s\equiv_{\mathsf{L}(\bar{d})} \neg A$ otherwise. Then $\bar{A}=r^\smallfrown A\cup s^\smallfrown(\neg A)$ is in the normal form (3b) and $\bar{A}\equiv_{\mathsf{L}(d_{\phi})}\bar{C}$. 
%
\end{proof}

By Claim~\ref{claim:normalforms}, to show that \( \SLO^{\L(d_\phi)} \) holds for Borel subsets of \( Y \) it is enough to show that for every pair of Borel sets $\bar{A}$ and $\bar{B}$ in $\L(d_\phi)$-normal form, either \( \bar{A} \leq_{\L(d_\phi)} \bar{B} \) or \( \bar{B} \leq_{\L(d_\phi)} \neg \bar{A} \): we are now going to sketch the proof of this fact, by considering all the possible combinations of normal forms. 

If $\bar{A}$ is in case (1) of the normal form, then it is \( \L(d_\phi) \)-selfdual, and hence semi-linearity is equivalent to showing that 
\( \bar{A} \leq_{\L(d_\phi)} \bar{B} \) or \( \bar{B} \leq_{\L(d_\phi)} \bar{A} \). Let \( A' = \oplus_{n \in \omega} A_n \), so that 
\( [ A']_{\L(\bar{d})}  = \sup_{n \in \omega} [A_n]_{\L(\bar{d})} \). First assume that \( \bar{B} \) is either in normal form (1) or (2), and let 
\( B' = \bigoplus_{n \in \omega} B_n \). 
If \( A' <_{\L(\bar{d})} B' \) (equivalently, \( A' <_{\Lip(\bar{d})} B' \)), then 
we get \( \bar{A} \leq_{\L(d_\phi)} \bar{B} \), and similarly switching the role of \( A \) and \( B \). 
If instead \( A' \equiv_{\L(\bar{d})} B' \), then we get \( \bar{A} \equiv_{\L(d_\phi)} \bar{B} \). 
 Assume now that \( \bar{B} \) is either in normal form (3) or (4). Then using \( B \) in place of \( B' \) in the argument above (and noticing 
that either \( A' \leq_{\Lip(\bar{d})} B \) or else \( B \leq_{\L(\bar{d})} A_n \) for all sufficiently large \( n \in \omega \)) we get again that \( \bar{A} \) 
is \( \L(d_\phi) \)-comparable with \( \bar{B} \), as required.

Let now \( \bar{A} \) be in normal form (2). If $\bar{B}$ is in normal form (2) too, arguing as in the previous case we compare $A' = \bigoplus_{n \in \omega} A_n$ and  $B' = \bigoplus_{n \in \omega} B_n$ with respect to \( \L(\bar{d}) \). 
Similarly, if $\bar{B}$ is in case (3), we compare $A'$ with $B$ with respect to \( \L(\bar{d}) \), and then argue as above again. 
Now let us suppose that $\bar{B}$ is in case (4). If $A_n <_{\mathsf{Lip}(\bar{d})} B$ for all $n\in\omega$, then $\bar{A}\leq_{\mathsf{L}(d_{\phi})} \bar{B}$. 
Otherwise $B\leq_{\mathsf{L}(\bar{d})} A_n$ for some $n\in\omega$ and thus $\bar{B}\leq_{\mathsf{L}(\bar{d})} \bar{A}$. 


We now assume that \( \bar{A} \) is in normal form (3). If $\bar{B}$ is in normal form (3) too, we can prove \( \bar{A} \leq_{\L(d_\phi)} \bar{B} \) or \( \bar{B} \leq_{\L(d_\phi)} \neg \bar{A} \)  by first comparing \( A \) and \( B \) with respect to \( \Lip(\bar{d}) \)-reducibility (equivalently, \( \L(\bar{d}) \)-reducibility), using the assumption $\inf \rg(\phi)>0$. 
If $\bar{A}\equiv_{\mathsf{Lip}(\bar{d})} \bar{B}$ and both $\bar{A}$ and $\bar{B}$ are in case (3b), then we simply compare the minimum of the values $r_1$ appearing in their normal forms. The comparison is straightforward in all other cases for $\bar{A}$ and $\bar{B}$ in the normal form (3) with $\bar{A}\equiv_{\mathsf{Lip}(\bar{d})} \bar{B}$. 
If instead $\bar{B}$ is in case (4), using the assumption $\inf \rg(\phi)>0$, 
we simply need to compare the $\mathsf{L}(\bar{d})$-degrees of $A$ and $B$; all possible relationships between these degrees with respect to \( \leq_{\L(\bar{d})} \) can be transferred back to analogous relationships between \( \bar{A} \) and \( \bar{B} \) with respect to \( \leq_{\L(d_\phi)} \). 

Let us finally suppose that $\bar{A}$ and $\bar{B}$ are both in case (4); this is the more delicate case. Since $\bar{A}$ and $\bar{B}$ are clearly $\mathsf{L}(d_\phi)$-selfdual, it is again sufficient to show that they are $\mathsf{L}(d_{\phi})$-comparable. 
First assume that $\bar{A}$ and $\bar{B}$ are  both in case (4a), where $\bar{A}=r^\smallfrown A$ and $\bar{B}=s^{\smallfrown} B$. 
If $A$ and $B$ are not in the same ${\mathsf{Lip}(\bar{d})}$-degree, then it is easy to compare \( \bar{A} \) and \( \bar{B} \) with respect to $\leq_{\mathsf{L}(d_{\phi})}$, and if $A\leq_{\mathsf{L}(\bar{d})} B$ then $\bar{A} \leq_{\mathsf{L}(d_{\phi})} \bar{B}$.
Hence we can assume that \( A \equiv_{\Lip(\bar{d})} B \) and $B\leq_{\mathsf{L}(\bar{d})} A$, so that $A\equiv_{\mathsf{L}(\bar{d})} 0^{(n)}{}^\smallfrown B$ for some $n\in\omega$. 
Using the assumption $\inf \rg(\phi)>0$, it is now easy to check that $\bar{A}\leq_{\mathsf{L}(d_{\phi})}\bar{B}$ holds if $s\cdot 2^n \leq r$, while $\bar{B}\leq_{\mathsf{L}(d_{\phi})}\bar{A}$ holds if $r\leq s\cdot 2^n$.  
 

Suppose now that $\bar{A}=r^\smallfrown A$ is in case (4a) and $\bar{B}$ is in case (4b). 
We have $\bar{A}\leq_{\mathsf{L}(d_{\phi})} \bar{B}$ if $A \leq_{\mathsf{L}(\bar{d})} B$ holds, 
and moreover  
$B<_{\mathsf{Lip}(\bar{d})} A$ implies that $0^{(n)}{}^\smallfrown B\) \(\leq_{\mathsf{L}(\bar{d})} A$ for all $n\in\omega$ (which in turn implies $\bar{B}\leq_{\mathsf{L}({d_{\phi}})}\bar{A}$). 
Thus we can assume that $A\equiv_{\mathsf{Lip}(\bar{d})} B$ and $B\leq_{\mathsf{L}(\bar{d})} A$, so that again $A\equiv_{\mathsf{L}(\bar{d})} 0^{(n)}{}^\smallfrown B$ for some $n\in\omega$, and let $s=\inf \rg(\phi)$. 
Arguing similarly to the previous case, it is easy to check that $\bar{A}\leq_{\mathsf{L}(d_{\phi})}\bar{B}$ holds if $s\cdot 2^n < r$, while $\bar{B}\leq_{\mathsf{L}(d_{\phi})}\bar{A}$ holds if $s \cdot 2^n \geq r$. 

The last case that needs to be considered is when both $\bar{A}$ and $\bar{B}$ are in case (4b). 
We may assume that $A\leq_{\mathsf{L}(\bar{d})} B$ and hence $\bar{A}\leq_{\mathsf{L}(d_{\phi})}\bar{B}$. 
This concludes the proof that \( \SLO^{\L(d_\phi)} \) holds for Borel subsets (in normal form) of \( Y \).

\medskip

It remains to show that the \( \L(d_\phi) \)-hierarchy on Borel subsets of \( Y \) is well-founded, and for this we may again concentrate only on sets in normal form. Assume towards a contradiction that there is a family \( (\bar{A}^{(i)})_{i \in \omega} \) of Borel subsets of \( Y \) in normal form such that \( \bar{A}^{(i+1)} <_{\L(d_\phi)} \bar{A}^{(i)} \) for all \( i \in \omega \). Since there are only finitely many types of  normal form, passing to a subsequence if necessary we may further assume that all the \( \bar{A}^{(i)} \)'s share the same type of normal form. We now consider the various possibilities. 

First assume that the \( \bar{A}^{(i)} \)'s are all in normal form (1), and set \( (A')^{(i)} = \bigoplus_{n \in \omega} A^{(i)}_n \), where the sets 
\( A^{(i)}_n \subseteq \pre{\omega}{\omega} \) are those appearing in the normal form of \( \bar{A}^{(i)} \). Notice that all the \( (A')^{(i)} \) are necessarily \( \L(\bar{d}) \)-selfdual. Then \( (A')^{(i+1)} <_{\L(\bar{d})} (A')^{(i)} \), because otherwise \( (A')^{(i)} \leq_{\L(\bar{d})} (A')^{(i+1)} \), whence one would easily get \( \bar{A}^{(i)} \leq_{\L(d_\phi)} \bar{A}^{(i+1)} \), contradicting the choice of the \( \bar{A}^{(i)} \)'s. Therefore the \( (A')^{(i)} \) are strictly \( \L(\bar{d}) \)-decreasing, 
 contradicting the fact that the \( \L(\bar{d}) \)-hierarchy on Borel subsets of \( \pre{\omega}{\omega} \) is well-founded.

The case where all the \( \bar{A}^{(i)} \)'s are in normal form (2) can be dealt with in the same way, and a similar argument works also for the other cases with the following minor modifications:
\begin{enumerate}[\( \bullet \)]
\item
When considering normal forms as in (3a), set \( (A')^{(i)} = A^{(i)} \), where \( A^{(i)} \subseteq \pre{\omega}{\omega} \) is the set appearing in the normal form of \( \bar{A}^{(i)} \), and pass to a subsequence if necessary to avoid the situations in which \( A^{(i+1)} \equiv_{\L(\bar{d})} \neg A^{(i)} \);
\item
When considering normal forms as in (3b) or (3c), set \( (A')^{(i)} = (0 {}^\smallfrown{}  A^{(i)}) \cup (1 {}^\smallfrown{} (\neg A^{(i))}) \), where \( A^{(i)}, \neg A^{(i)} \subseteq \pre{\omega}{\omega} \) are the sets appearing in the normal form of \( \bar{A}^{(i)} \). In case (3b), we may need to pass to a subsequence \( ((A')^{(i_l)})_{l \in \omega} \) to guarantee that \( A^{(i_{l+1})} <_{\L(\bar{d})} A^{(i_l)} \). 
\item
When considering normal forms as in (4), set \( (A')^{(i)} = A^{(i)} \), where \( A^{(i)} \subseteq \pre{\omega}{\omega} \) is the set appearing in the normal form of \( \bar{A}^{(i)} \). In case (4a) it may be necessary to first pass to a subsequence \( ((A')^{(i_l)})_{l \in \omega} \) to guarantee that the sequence of the \( r^{(i_l)} \)'s appearing in the canonical form of \( \bar{A}^{(i_l)} \) is not \( < \)-increasing.
\end{enumerate}
This concludes the proof of the well-foundness of \( \leq_{\L(d_\phi)} \) on Borel subsets of \( Y \), and hence of the entire proposition.
\end{proof}

Corollaries~\ref{cor:specialincreasing},~\ref{cor:specialconvergingto0}, and Theorem~\ref{prop:specialalpha*} already cover many interesting cases, and using the methods developed in the proof of Theorem~\ref{prop:specialalpha*} it seems plausible to conjecture that if the range of \( \phi \) does not contain increasing \(\omega\)-sequences, then the \( \L(d_\phi) \)-hierarchy on Borel subsets of \( \pre{\omega}{\omega} \) is well-founded. 
However, the general problem of determining the character of the \( \L(d) \)-hierarchy on an arbitrary ultrametric Polish space \( X = (X,d) \) remains open:
 
\begin{question} \label{quesn:notincreasing}
Let $X = (X,d)$ be an ultrametric (perfect) Polish space such that \( R(d) \) does not contain an honest increasing sequence, and assume that \( R(d) \) is neither finite nor a (\( \omega \)-)sequence converging to \( 0 \). Is the $\mathsf{L}(d)$-hierarchy \( \Deg_{\boldsymbol{\Delta}^1_1}(\L(d)) \) on the Borel subsets of $X$ (very) good? 
\end{question} 

\begin{remark}
In order to answer Question~\ref{quesn:notincreasing}, it may be useful to note the following.
It is proved in \cite[Theorem 4.1]{Gao:2011} that every ultrametric Polish space $X = (X,d)$ is isometric to a closed subspace of the ultrametric Urysohn space $U_{R(d)}=\{(x_n)_{n \in \omega} \in {}^{\omega}(R(d) \cup \{ 0 \})\mid x_n \geq x_{n+1}$ for all $n$ and $\lim_{n\to \infty} x_n=0\}$ equipped with the complete ultrametric 
\[ 
d_{U_{R(d)}} ((x_n)_{n \in \omega},(y_n)_{n \in \omega}) = 
\begin{cases}
0 & \text{if } x_n = y_n \text{ for all }n, \\
\max({x_n, y_n}) & \text{if \( n \) is least such that } x(n) \neq y(n). 
\end{cases}
 \] 
Suppose that $X = (X,d)$ is a perfect ultrametric Polish space and choose a closed subspace $Y$ of $(U_{R(d)},d_{U_{R(d)}})$ such that $Y =(Y, d_{U_{R(d)}})$ is isometric to $X$. 
Let $S(Y)=\{y \restriction n\mid y\in Y,\ n\in\omega\}$, and set  $D(s)=\{r\in\mathbb{R}^+ \mid \exists x\in {}^\omega(R(d) \cup \{ 0 \}) \, (s {}^\smallfrown{}  r {}^\smallfrown{}  x\in Y) \}$ for each $s\in S(Y)$. Notice that $S(Y)$ and $D(s)$ are countable since $R(d)$ is countable. 
If there is a strictly increasing sequence $(r_n)_{n\in\omega}$ in $D(s)$ for some $s\in S(Y)$, then we obtain an honest increasing sequence in $R(d)$ from the assumption that $(X,d)$ is perfect. If there is no honest increasing sequence in $R(d)$, it follows that the order type of each $D(s)$ is $\alpha_s^*$ for some countable ordinal $\alpha_s$. 
\end{remark}


Finally, we want to show that, even if 
by Theorem~\ref{th:increasingdistances} it is possible that  the \( \L(d) \)-hierarchy \( \Deg_{\boldsymbol{\Delta}^1_1}(\L(d)) \) on Borel subsets of a given ultrametric Polish space \( X = (X,d) \) with bounded diameter is very bad, a natural (modest) strengthening of the preorder \( \leq_{\L(d)} \) already yields to a semi-linearly ordered hierarchy.

\begin{definition} \label{def:almost}
Suppose $X=(X,d)$ is an ultrametric Polish space, and let $A,B\subseteq X$. Let us write $A\leq_{\Lip(d,L)} B$ if there is a Lipschitz function $f\colon (X,d) \rightarrow (X,d)$ with constant $L \in \RR^+$ such that $A=f^{-1}(B)$. 
We say that $A$ is \emph{almost nonexpansive reducible} to $B$ ($A\leq_{\mathsf{a}\mathsf{L}(d)}B$ in symbols) if  \( A \leq_{\Lip(d,L)} B \) for every $1 < L \in \RR^+ $. 
\end{definition} 

Notice that the relation $\leq_{\mathsf{a}\mathsf{L}(d)}$ is a preorder (for the transitivity use the fact that if \( f,g \colon X \to X \) are Lipschitz functions with constant \( L, L' \), respectively, then \( g \circ f \) is Lipschitz with constant \( L \cdot L' \)). Moreover, \( \leq_{\mathsf{a} \L (d)} \) is  strictly between \( \leq_{\L(d)} \) and \( \leq_{\Lip(d)} \). Even if literally \( \leq_{\mathsf{a}\L(d)} \) is not of the form \( \leq_\F \) for some reducibility \( \F \) on \( X \), with a little abuse of notation and terminology we can nevertheless  consider the \( \mathsf{a}\L(d) \)-hierarchy on (Borel subsets of) \( X \), the Semi-Linear Ordering principle \( \SLO^{\mathsf{a}\L(d)} \), and so on (with the obvious definitions).

\begin{proposition} 
Let $X=(X,d)$ be an ultrametric Polish space with bounded diameter. Then the $\mathsf{a}\mathsf{L}(d)$-hierarchy on the Borel subsets of $X$ is semi-linearly ordered, and hence not bad.
\end{proposition}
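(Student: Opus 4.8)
The plan is to derive \( \SLO^{\mathsf{a}\L(d)} \) from the already-established semi-linear ordering of \( \leq_{\Lip(d)} \) (Theorem~\ref{th:unifcontandlip}) by transferring to the Baire space and upgrading \( \leq_{\Lip(d)} \) to \( \leq_{\mathsf{a}\L(d)} \) one \( \Lip(d) \)-degree-block at a time. First I would normalise: since a map is \( L \)-Lipschitz for \( d \) exactly when it is \( L \)-Lipschitz for \( \lambda d \) \( (\lambda \in \RR^+) \), each relation \( \leq_{\Lip(d,L)} \), and hence \( \leq_{\mathsf{a}\L(d)} \), is invariant under rescaling the metric, so we may assume \( X \) has diameter \( \leq 1 \). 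Then by Lemma~\ref{lemma:unifcontandlip} fix a closed \( C \subseteq \pre{\omega}{\omega} \) and a bijection \( f \colon (C,\bar{d}) \to (X,d) \) with \( f \) nonexpansive and \( f^{-1} \) Lipschitz with constant \(2\), together with a nonexpansive retraction \( g \colon \pre{\omega}{\omega} \twoheadrightarrow C \); set \( p = f \circ g \colon \pre{\omega}{\omega} \twoheadrightarrow X \) (nonexpansive) and \( \iota = f^{-1} \colon X \to \pre{\omega}{\omega} \) (Lipschitz with constant \(2\), a right inverse of \( p \)), and for Borel \( A,B \subseteq X \) put \( A^{\ast} = p^{-1}(A) \), \( B^{\ast} = p^{-1}(B) \). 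Applying Lemma~\ref{lemma:retraction} to the family of all Lipschitz functions, \( A \mapsto A^{\ast} \) is an embedding of \( (\pow(X),\leq_{\Lip(d)},\neg) \) into \( (\pow(\pre{\omega}{\omega}),\leq_{\Lip(\bar{d})},\neg) \); so it is faithful for \( \leq_{\Lip} \), \( <_{\Lip} \), \( \equiv_{\Lip} \), and for \( \Lip \)-selfduality, and moreover any \( \Lip(\bar{d}) \)-reduction \( k \) of \( A^{\ast} \) to \( B^{\ast} \) of constant \( \delta \) yields, via \( p \circ k \circ \iota \), a \( \Lip(d) \)-reduction of \( A \) to \( B \) of constant \( 2\delta \) — the factor \(2\) being entirely due to \( \iota \).

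Now fix Borel \( A,B \subseteq X \). By \( \SLO^{\Lip(d)} \) we may assume \( A \leq_{\Lip(d)} B \), and the task is to show \( A \leq_{\mathsf{a}\L(d)} B \) (the first disjunct of \( \SLO^{\mathsf{a}\L(d)} \)). If \( A <_{\Lip(d)} B \), then \( A^{\ast} <_{\Lip(\bar{d})} B^{\ast} \) by faithfulness, so by the known fact on \( (\pre{\omega}{\omega},\bar{d}) \) that being strictly below yields reductions with arbitrarily small Lipschitz constant, \( A^{\ast} \) reduces to \( B^{\ast} \) with constants tending to \(0\); transferring back (where the factor \(2\) is harmless, since the constants still tend to \(0\)), \( A \) reduces to \( B \) with constants tending to \(0\), whence \( A \leq_{\Lip(d,L)} B \) for every \( L>1 \), i.e.\ \( A \leq_{\mathsf{a}\L(d)} B \).

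It remains to handle \( A \equiv_{\Lip(d)} B \). If the common \( \Lip(d) \)-degree is \( \Lip(d) \)-nonselfdual, then \( A^{\ast} \equiv_{\Lip(\bar{d})} B^{\ast} \) lies in a \( \Lip(\bar{d}) \)-nonselfdual degree, which coincides with the corresponding \( \L(\bar{d}) \)-degree; thus \( A^{\ast} \equiv_{\L(\bar{d})} B^{\ast} \) and \( B^{\ast} \) is \( \L(\bar{d}) \)-nonselfdual, hence self-contractible, and the iterates of a fixed contraction reduce \( B^{\ast} \) to itself with constants tending to \(0\). Composing a nonexpansive reduction \( A^{\ast} \to B^{\ast} \) with these contractions again gives reductions of arbitrarily small constant, so transferring back yields \( A \leq_{\mathsf{a}\L(d)} B \). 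If instead the common \( \Lip(d) \)-degree is \( \Lip(d) \)-selfdual, I would reduce to proving: \emph{any two Borel sets lying in one \( \Lip(d) \)-selfdual degree are \( \mathsf{a}\L(d) \)-comparable}. This suffices, because it forces every set in that degree to be \( \mathsf{a}\L(d) \)-selfdual (apply \( \neg \), using that \( \leq_{\mathsf{a}\L(d)} \) is \( \neg \)-invariant), so that from \( \mathsf{a}\L(d) \)-comparability of \( A \) and \( B \) together with \( B \equiv_{\mathsf{a}\L(d)} \neg B \) one obtains \( A \leq_{\mathsf{a}\L(d)} B \) or \( \neg B \leq_{\mathsf{a}\L(d)} A \). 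Transferring, \( A^{\ast},B^{\ast} \) lie in a common \( \Lip(\bar{d}) \)-selfdual degree, which by the structure theory of the Lipschitz hierarchy on the Baire space equals \( \bigcup_{n \in \omega}[0^{(n)} {}^\smallfrown{} A'']_{\L(\bar{d})} \) for some \( \L(\bar{d}) \)-selfdual \( A'' \); so \( A^{\ast} \equiv_{\L(\bar{d})} 0^{(a)} {}^\smallfrown{} A'' \) and \( B^{\ast} \equiv_{\L(\bar{d})} 0^{(b)} {}^\smallfrown{} A'' \) for some \( a,b \in \omega \), say \( a \leq b \). If \( a < b \), then the contraction \( x \mapsto 0^{(b-a)} {}^\smallfrown{} x \) (of constant \( 2^{-(b-a)} \leq \tfrac{1}{2} \)), composed with the nonexpansive reductions witnessing the two equivalences, gives \( A^{\ast} \leq_{\Lip(\bar{d},\,2^{-(b-a)})} B^{\ast} \); transferring back yields a \( \Lip(d) \)-reduction of \( A \) to \( B \) of constant \( 2^{1-(b-a)} \leq 1 \), so \( A \leq_{\L(d)} B \), and since \( \leq_{\L(d)} \,\subseteq\, \leq_{\mathsf{a}\L(d)} \) we are done.

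The hard part will be the remaining sub-case \( a = b \), i.e.\ \( A^{\ast} \equiv_{\L(\bar{d})} B^{\ast} \) with both sides \( \L(\bar{d}) \)-selfdual: here self-contraction is unavailable, and the generic factor-\(2\) transfer delivers only \( A \leq_{\Lip(d,2)} B \), which does not give \( A \leq_{\mathsf{a}\L(d)} B \) (nor does it collapse to \( A \equiv_{\L(d)} B \), so the situation genuinely arises, e.g.\ when \( R(d) \) is densely spaced). I would attack it \emph{without} routing through \( \iota \): from \( A^{\ast} \equiv_{\L(\bar{d})} B^{\ast} \), a back-and-forth between the two nonexpansive reductions aligns, level by level, the canonical clopen subtrees carrying \( A^{\ast} \) and \( B^{\ast} \); one then re-runs the Luzin-scheme construction underlying Lemma~\ref{lemma:unifcontandlip} \emph{simultaneously} for \( (X,A) \) and \( (X,B) \), choosing at each stage subdivisions whose radii are close enough to the ambient radius — the delicate point, to be settled by a careful analysis of the distances actually available in \( X \) — so that the alignment of subtrees induces a single bijection \( X \to X \) reducing \( A \) to \( B \) that is Lipschitz with constant \( 1+\varepsilon \), for any prescribed \( \varepsilon>0 \); letting \( \varepsilon \to 0 \) gives \( A \leq_{\mathsf{a}\L(d)} B \). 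Making this constant-controlled transfer of \( \L(\bar{d}) \)-equivalence between \( \L(\bar{d}) \)-selfdual sets precise is where the real work lies. Once \( \SLO^{\mathsf{a}\L(d)} \) is established, the final clause — that the hierarchy is not bad — is immediate, since a semi-linearly ordered set has no antichains of size \( >2 \).
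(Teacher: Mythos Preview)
Your approach diverges sharply from the paper's, and the divergence is precisely at the ``hard part'' you flag. The paper does not attempt to squeeze out the factor-\(2\) transfer loss or to analyse the \(\L(\bar d)\)-selfdual case on the Baire space at all. Instead, for each \(L>1\) it replaces \(d\) by the \emph{discretised} ultrametric \(d_L(x,y)=\min\{L^n\mid n\in\ZZ,\ d(x,y)\le L^n\}\). Since \(X\) has bounded diameter, \(R(d_L)\subseteq\{L^n\mid n\in\ZZ\}\) is finite or a decreasing \(\omega\)-sequence converging to \(0\), so Theorem~\ref{th:descendingdistances} gives \(\SLO^{\L(d_L)}\) outright. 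Because \(\id\colon(X,d)\to(X,d_L)\) is \(L\)-Lipschitz and \(\id\colon(X,d_L)\to(X,d)\) is nonexpansive, any \(\L(d_L)\)-reduction yields a \(\Lip(d,L)\)-reduction. Thus for each \(L>1\) one has \(A\le_{\Lip(d,L)}B\) or \(B\le_{\Lip(d,L)}\neg A\); a pigeonhole over \(L\downarrow 1\) gives \(\SLO^{\mathsf a\L(d)}\). No case analysis, no Baire-space structure theory, no Luzin-scheme alignment.

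Your sketch for the hard sub-case (\(A^\ast\equiv_{\L(\bar d)}B^\ast\), both \(\L(\bar d)\)-selfdual) is not a proof but a hope, and I do not see how to realise it. The proposal to ``re-run the Luzin-scheme construction simultaneously\dots choosing subdivisions whose radii are close enough to the ambient radius'' presupposes that \(R(d)\) supplies, at every scale, radii arbitrarily close to one another---but \(R(d)\) is an arbitrary countable bounded subset of \(\RR^+\) and may have large gaps (e.g.\ \(R(d)=\{2^{-n}\mid n\in\omega\}\), where consecutive radii always differ by a factor \(2\), so a \((1+\varepsilon)\)-Lipschitz bijection shuffling basic balls at a given level simply need not exist). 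The back-and-forth between two nonexpansive reductions on \(\pre{\omega}{\omega}\) also does not control Lipschitz constants on \(X\): it only lives at the discrete levels of the Baire-space tree, and pushing it through \(f\) and \(f^{-1}\) reintroduces exactly the factor-\(2\) loss you are trying to avoid. In short, the genuine obstacle is that the geometry of \(R(d)\) is uncontrolled; the paper's trick is to \emph{force} \(R(d_L)\) to be a geometric progression with ratio \(L\), which makes the problem disappear. Your earlier cases (strict \(\Lip(d)\)-inequality and \(\Lip(d)\)-nonselfdual equivalence) are handled correctly, but the argument as written has a real gap.
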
 

\begin{proof} 
Given \( L > 1 \), let $d_L \colon X\times X \to \RR^+$ be defined by $d_L(x,y)=\min (\{L^{n}\mid d(x,y)\leq L^n$ and $n\in \ZZ\})$ if \(x,y \in X \) are distinct, and by \( d_L(x,y) = 0 \) if \( x = y \in X \). Then $d_L$ is a complete ultrametric on $X$ compatible with the metric topology \( \tau_d \), and since we assumed that \( X \) has bounded (\( d \)-)diameter we also have that \( R(d_L) \subseteq \{ L^n \mid n \in \ZZ \} \) is either finite, or a decreasing sequence converging to \( 0 \). By  Theorem~\ref{th:descendingdistances}, this means that the \( \L(d_L) \)-hierarchy on Borel subsets of \( X \) is very good, and hence, in particular, semi-linearly ordered.
Moreover, $\id\colon (X,d)\rightarrow (X,d_L)$ is Lipschitz with constant \( L \), while $\id \colon (X,d_L)\rightarrow (X,d)$ 
is nonexpansive.  
Hence for all subsets $A,B$ of $X$: 

\begin{itemize} 
\item if $A\leq_{\mathsf{Lip}(d,L')} B$, then $A\leq_{\mathsf{Lip}(d_L, L \cdot L')} B$;
\item if $A\leq_{\mathsf{Lip}(d_L,L')} B$, then $A\leq_{\mathsf{Lip}(d, L\cdot L')} B$. 
\end{itemize} 
In particular, $A\leq_{\mathsf{L}(d_L)} B$ implies that $A\leq_{\mathsf{Lip}(d,L)} B$. 

We claim $\SLO^{\mathsf{a}\mathsf{L}(d)}$ holds for Borel subsets of $X$. By the observation above and \( \SLO^{\Lip(d_L)} \), for every fixed \( L > 1 \) we have that either $A\leq_{\mathsf{Lip}(d,L)} B$ or $B\leq_{\mathsf{Lip}(d,L)} \neg A$. If for every \( n \in \omega \) there is \( 1< L \leq 1+2^{-n} \) such that $A\leq_{\mathsf{Lip}(d,L)} B$, then \( A \leq_{\mathsf{a} \L(d)} B \). Similarly, if for every \( n \in \omega \) there is \( 1< L \leq 1+2^{-n} \) such that $B\leq_{\mathsf{Lip}(d,L)} \neg A$, then \( B \leq_{\mathsf{a} \L(d)} \neg A \). Since one of the two possibilities necessarily occurs, we get that either \( A \leq_{\mathsf{a}\L(d)} B \) or \( B \leq_{\mathsf{a} \L(d)} \neg A \), as required. 
%
\end{proof}

\section{Compact ultrametric Polish spaces}

It is well-known that any continuous function between metric spaces is automatically uniformly continuous as soon as its domain is compact (see e.g.\ \cite[Proposition 4.5]{Kechris:1995}). In particular, this means that it does not make much sense to consider the \( \UCont(d) \)-hierarchy on a compact ultrametric Polish space \( X = (X,d) \): since it coincide%
\footnote{In fact in the specific case of the Cantor space \( \mathcal{C} = (\pre{\omega}{2}, \bar{d}_{\mathcal{C}}) \) one can check that, although \( \Lip(\bar{d}_{\mathcal{C}}) \subsetneq \UCont(\bar{d}_{\mathcal{C}})\), the \( \Lip(\bar{d}_{\mathcal{C}}) \)- and the \( \UCont(\bar{d}_{\mathcal{C}}) \)-hierarchies coincide.}
 with the \( \W(X) \)-hierarchy on \( X \), its restriction to the Borel sets is always very good by Proposition~\ref{prop:continuous}. However, one may wonder about the character of the \( \Lip(d) \)- and the \( \L(d) \)-hierarchy on (Borel subsets of) such an \( X \): the next results show that they  must always be very good as well.

\begin{proposition} \label{prop:R(d)}
Let \( X = (X,d) \) be a compact ultrametric Polish space. Then either \( X \) (and hence also \( R(d) \)) is finite, or else \( R(d) \) is a strictly decreasing (\( \omega \)-)sequence converging to \( 0 \). In particular, \( X \) has bounded diameter.
\end{proposition}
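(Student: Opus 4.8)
The plan is to isolate one substantive claim and then derive everything from it by routine order-theoretic bookkeeping. The claim is: \emph{for every $\varepsilon \in \RR^+$ the set $R(d) \cap [\varepsilon,+\infty)$ is finite.} This is the only place where compactness does real work, so I would prove it first.

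To prove the claim, fix $\varepsilon \in \RR^+$. The open balls $B_d(x,\varepsilon)$ ($x \in X$) cover $X$, and in an ultrametric space two balls of the same radius are either equal or disjoint; so by compactness finitely many of them, pairwise disjoint, say $C_1,\dots,C_n$, already cover $X$, and they are exactly the (equivalence) classes of the relation ``$d(x,y) < \varepsilon$'' on $X$. Now I would observe that whenever $i \neq j$, the distance $d(a,b)$ is the same for all $a \in C_i$ and $b \in C_j$: if $a,a' \in C_i$ and $b,b' \in C_j$, then $d(a,a'),d(b,b') < \varepsilon \le d(a,b)$ (the inequality $\varepsilon \le d(a,b)$ holds since $b \notin C_i = B_d(a,\varepsilon)$), and hence $d(a',b') = d(a,b)$ by the isosceles rule. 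Consequently every distance realized in $X$ that is $\ge \varepsilon$ is one of the at most $\binom{n}{2}$ numbers $d(C_i,C_j)$, proving the claim.

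It then remains to assemble the proposition. Since $X$ is compact, $d \colon X \times X \to \RR$ is bounded, so $X$ has bounded diameter (settling the ``in particular'' clause) and $R(d)$ is bounded above. If $X$ is finite, then $R(d)$ is finite and we are in the first alternative. Otherwise $X$ is infinite, and then $R(d)$ is infinite too: if some $\varepsilon \in \RR^+$ were $\le \min R(d)$, the singletons $\{x\} = B_d(x,\varepsilon)$ would be an open cover of $X$ admitting no finite subcover. Using the claim, $R(d)$ has a maximum $r_0$, and recursively $r_{n+1} := \max\bigl(R(d) \cap (0,r_n)\bigr)$ is a well-defined element of $R(d)$ strictly below $r_n$; since $R(d) \cap (r,+\infty)$ is finite for every $r \in R(d)$, each element of $R(d)$ occurs among the $r_n$, so $R(d) = \{ r_n \mid n \in \omega \}$ is strictly decreasing. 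Finally $(r_n)_{n\in\omega}$ decreases to some $\ell \ge 0$, and $\ell = 0$ because otherwise $R(d) \cap [\ell,+\infty)$ would contain all the $r_n$, contradicting the claim. The main obstacle is really just the claim, and inside it the use of the isosceles rule to force inter-class distances to be constant; everything else is routine.
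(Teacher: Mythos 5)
Your proof is correct and follows essentially the same strategy as the paper: the entire content is the claim that $R(d)\cap[\varepsilon,+\infty)$ is finite for every $\varepsilon\in\RR^+$, proved via compactness applied to the finite cover by $\varepsilon$-balls together with the ultrametric rule that forces inter-ball distances to be constant (the paper phrases this as a pigeonhole argument on pairs $(x_n,y_n)$ realizing distinct distances, while you count inter-class distances directly, but it is the same idea). You also spell out the order-theoretic bookkeeping that the paper leaves implicit; those details are all sound.
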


\begin{proof}
It is clearly enough to show that for every \(  \bar{r} \in \RR^+  \), the set \( R(d)_{\geq \bar{r}} = \{ r \in R(d) \mid r \geq \bar{r} \} \) is finite. To see this, observe that the family \( \mathcal{B} =  \{ B_d(x,\bar{r}) \mid x \in X \} \) is a finite covering of \( X \) because \( X \) is compact. Assume towards a contradiction that \( R(d)_{\geq \bar{r}} \) is infinite, let \( (r_n)_{n \in \omega} \) be an enumeration without repetitions of it, and let \( (x_n)_{n \in \omega} \) and \( (y_n)_{n \in \omega } \) be such that \( d(x_n,y_n) = r_n \) for every \( n \in \omega \). Since \( \mathcal{B} \) is finite, there are distinct \( n,m \in \omega \) such that \( d(x_n,x_m), d(y_n,y_m) < \bar{r} \). Since \( r_m \geq \bar{r} \), we get that \( r_n = d(x_n,y_n) = d(x_m,y_m) = r_m \), contradicting the choice of the \( r_n \)'s.
\end{proof}

\begin{theorem} \label{th:compact}
Let \( X = (X,d) \) be a compact ultrametric Polish space. Then both the \( \L(d) \)- and the \( \Lip(d) \)-hierarchy on Borel subsets of \( X \) are very good. 
\end{theorem}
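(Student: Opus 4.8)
The plan is to derive both assertions directly from the structural facts already in place, so that essentially no new argument is needed. The first step is to invoke Proposition~\ref{prop:R(d)}: for a \emph{compact} ultrametric Polish space $X = (X,d)$ we have the dichotomy that either $X$ (and hence $R(d)$) is finite, or else $R(d)$ is a strictly decreasing $\omega$-sequence converging to $0$; and in both cases $X$ has bounded diameter. Thus $X$ automatically satisfies the hypotheses of the two relevant theorems simultaneously.

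For the $\L(d)$-hierarchy, the conclusion of Proposition~\ref{prop:R(d)} says precisely that $R(d)$ is either finite or a descending $(\omega$-$)$sequence converging to $0$, which is exactly the hypothesis of Theorem~\ref{th:descendingdistances}; hence $\Deg_{\boldsymbol{\Delta}^1_1}(\L(d))$ is very good. For the $\Lip(d)$-hierarchy, Proposition~\ref{prop:R(d)} gives that $X$ has bounded diameter, which is exactly the hypothesis of the second statement of Theorem~\ref{th:unifcontandlip}; hence $\Deg_{\boldsymbol{\Delta}^1_1}(\Lip(d))$ is very good as well. (As noted in the discussion preceding the theorem, the $\UCont(d)$-hierarchy need not be mentioned here, since on a compact space it coincides with the $\W(X)$-hierarchy and is covered by Proposition~\ref{prop:continuous}.)

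There is no genuine obstacle at this point: the real work has already been carried out, on the one hand in Proposition~\ref{prop:R(d)}, where compactness is used to force the realized distances to form a finite set or a null sequence, and on the other hand in the transfer arguments (via Lemma~\ref{lemma:retraction}, Lemma~\ref{lemma:unifcontandlip}, and Proposition~\ref{prop:descendingdistances}) underlying Theorems~\ref{th:unifcontandlip} and~\ref{th:descendingdistances}. The only minor point requiring attention is that the degenerate case in which $X$ is finite be subsumed by both invoked results, and it is: a finite $R(d)$ is explicitly allowed in Theorem~\ref{th:descendingdistances}, and a finite $X$ trivially has bounded diameter, so Theorem~\ref{th:unifcontandlip} applies too.
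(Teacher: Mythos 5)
Your proof is correct and follows exactly the paper's own argument: apply Proposition~\ref{prop:R(d)} to see that $R(d)$ is finite or a null sequence and that $X$ has bounded diameter, then invoke Theorem~\ref{th:descendingdistances} for $\L(d)$ and Theorem~\ref{th:unifcontandlip} for $\Lip(d)$. You merely spell out the one-line proof in more detail, including the sanity check on the finite case.
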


\begin{proof}
Use Proposition~\ref{prop:R(d)} together with Theorems~\ref{th:descendingdistances} and~\ref{th:unifcontandlip}. 
\end{proof}

In particular, we cannot change the ultrametric on the Cantor space \( \pre{\omega}{2} \) to make its nonexpansive or its Lipschitz hierarchy (very) bad: if \( d' \) is any complete ultrametric compatible with the product topology on \( \pre{\omega}{2} \), then both the \( \Lip(d') \)- and the \( \L(d') \)-hierarchy on Borel subsets of \( \pre{\omega}{2} \) are very good.%
\footnote{However, analogously to~\cite[Section 5]{MottoRos:2012c} it is still possible to define compatible complete ultrametrics \( d_i \), \( i = 0,1 \), on \( \mathcal{C} =  \pre{\omega}{2} \) so that \( \L(\bar{d}_{\mathcal{C}}) \not\subseteq \Lip(d_0) \) (hence also  \( \L(\bar{d}_{\mathcal{C}}) \not\subseteq \L(d_0) \)), while \( \L(\bar{d}_{\mathcal{C}}) \not\subseteq \L(d_1) \) but \( \Lip(\bar{d}_{\mathcal{C}}) = \Lip(d_1) \).}

\begin{remark}
Albeit Theorem~\ref{th:compact} shows that there is no compact ultrametric Polish space \( X = (X,d) \) with a (very) bad \( \Lip(d) \)- or \( \L(d) \)-hierarchy, Corollaries~\ref{cor:countable} and~\ref{cor:X_1} shows that there are \( \boldsymbol{K}_\sigma \)-spaces%
\footnote{A topological space is \( \boldsymbol{K}_\sigma \) if it is the union of countably many compact subsets.} 
\( X_i = (X_i,d_i) \), \( i = 0,1 \), such that:
\begin{enumerate}[\( \bullet \)]
\item
both \( \Deg_{\boldsymbol{\Delta}^0_1}(\L(d_0)) \) and \( \Deg_{\boldsymbol{\Delta}^0_1}(\Lip(d_0)) \) are very bad;
\item
\( \Deg_{\boldsymbol{\Delta}^0_1}(\L(d_1)) \) is very bad, while \( \Deg_{\boldsymbol{\Delta}^1_1}(\Lip(d_1)) \) is very good.
\end{enumerate}
\end{remark}

Let us now concentrate on the Cantor space \( \mathcal{C} =  \pre{\omega}{2} \), and let us briefly consider another kind of reducibility that was analyzed in~\cite{MottoRos:2012c} for the case of the Baire space, namely the collection of all contraction mappings.

\begin{notation}
Let \( \bar{d} = \bar{d}_{\mathcal{C}} \) be the usual metric on the Cantor space. We denote by \( \c(\bar{d}) \) the collection of all \emph{contractions} from \( \mathcal{C} \) into itself, i.e.\ of all Lipschitz functions \( f \colon \mathcal{C} \to \mathcal{C} \) with constant strictly smaller than \( 1 \).

Given two sets \( A,B \subseteq \mathcal{C} \), set 
\[ 
A \leq_{\c(\bar{d})} B \iff A = B \vee \exists f \in \c(\bar{d}) \, (A = f^{-1}(B)). 
\] 
In fact, \( {\leq_{\c(\bar{d})}} = {\leq_\F} \), where \( \F \) is the reducibility on \( \C \) obtained by adding the identity  \( \id = \id_{\mathcal{C}} \) to the set \( \c(\bar{d}) \).
\end{notation}

Using the methods developed in~\cite[Section 4]{MottoRos:2012c}, it is easy to check that the following hold:

\begin{theorem} \label{th:contractions}
Let \( A,B \) be Borel subsets of \( \mathcal{C} \). 
\begin{enumerate}
\item
If \( A \not\equiv_{\L(\bar{d})} B \), then \( A \leq_{\c(\bar{d})} B \iff A \leq_{\L(\bar{d})} B \), while if \( A \equiv_{\L(\bar{d})} B \), then \( A \leq_{\c(\bar{d})} B \iff A \nleq_{\L(\bar{d})} \neg A \).
\item
\( A \) is \emph{selfcontractible} (i.e.\ \( A = f^{-1}(A) \) for some \( f \in \c(\bar{d}) \)) if and only if \( A \nleq_{\L(\bar{d})}  \neg A \).
\item
If \( A \nleq_{\L(\bar{d})}  \neg A \), then \( [A]_{\c(\bar{d})} = [A]_{\L(\bar{d})} \), while if 
\( A \leq_{\L(\bar{d})}  \neg A \), then \( [A]_{\c(\bar{d})} = \{ A \} \).
\item
\( A <_{\c(\bar{d})} B \iff A <_{\L(\bar{d})} B \).
\end{enumerate}
\end{theorem}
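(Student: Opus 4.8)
The plan is to transfer to the Cantor space $\mathcal{C} = \pre{\omega}{2}$ the analysis of self-contractions carried out for the Baire space in~\cite[Section~4]{MottoRos:2012c}: the heart of the matter is item~(2), and (3), (1), (4) then follow by elementary compositions of reductions. For item~(2), the easy direction is ``$A$ self-contractible $\Rightarrow A \nleq_{\L(\bar{d})} \neg A$'': if $f \in \c(\bar{d})$ reduces $A$ to itself and $g$ were a nonexpansive reduction of $A$ to $\neg A$, then $g \circ f \in \c(\bar{d})$, so by the Banach fixed point theorem (here $(\mathcal{C},\bar{d})$ is complete) $g \circ f$ has a fixed point $z$, giving $z \in A \iff g(f(z)) \in A \iff z \notin A$, absurd. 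For the converse one argues exactly as in~\cite[Corollary~4.4]{MottoRos:2012c}: consider the Borel game on $2$ in which player~II moves first and the players then alternate, II winning iff I's run lies in $A$ precisely when II's does. A winning strategy for II is the same thing as a contraction reducing $A$ to $A$ (it reads I's bits with a one-step delay, hence has Lipschitz constant $\le \frac{1}{2}$), while a winning strategy for I is the same thing as a nonexpansive witness of $\neg A \leq_{\L(\bar{d})} A$, i.e.\ of the $\L(\bar{d})$-selfduality of $A$; determinacy of the game yields the dichotomy.

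Granting~(2), item~(3) is quick. If $A$ is $\L(\bar{d})$-nonselfdual, fix a contraction $g$ with $g^{-1}(A) = A$; for any $B \equiv_{\L(\bar{d})} A$, precomposing a nonexpansive reduction of $A$ to $B$ with $g$ yields a contraction reducing $A$ to $B$, and symmetrically $B \leq_{\c(\bar{d})} A$, so (using the trivial inclusion ${\leq_{\c(\bar{d})}} \subseteq {\leq_{\L(\bar{d})}}$) $[A]_{\c(\bar{d})} = [A]_{\L(\bar{d})}$. If instead $A$ is $\L(\bar{d})$-selfdual and some $B \neq A$ had $B \equiv_{\c(\bar{d})} A$, composing the two contraction reductions between $A$ and $B$ would make $A$ self-contractible, contradicting~(2); hence $[A]_{\c(\bar{d})} = \{A\}$. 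Item~(4) then unwinds through~(1) and~(3): both $A <_{\c(\bar{d})} B$ and $A <_{\L(\bar{d})} B$ are equivalent to ``$A \leq_{\L(\bar{d})} B$ and $B \nleq_{\L(\bar{d})} A$''.

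For item~(1) one direction is free, as ${\leq_{\c(\bar{d})}} \subseteq {\leq_{\L(\bar{d})}}$. Suppose first $A \not\equiv_{\L(\bar{d})} B$ and $A \leq_{\L(\bar{d})} B$, so $A <_{\L(\bar{d})} B$; I want $A \leq_{\c(\bar{d})} B$. If $A$ is $\L(\bar{d})$-nonselfdual, compose a nonexpansive reduction with a self-contraction of $A$ as in~(3). If $A$ is $\L(\bar{d})$-selfdual, then either $A <_{\Lip(\bar{d})} B$, and an arbitrarily-small-constant Lipschitz reduction (hence one of constant $< 1$) works; or $A \equiv_{\Lip(\bar{d})} B$, and then, using the description of $\Lip(\bar{d})$-selfdual degrees, $A \equiv_{\L(\bar{d})} 0^{(m)} {}^\smallfrown{} A'$ and $B \equiv_{\L(\bar{d})} 0^{(n)} {}^\smallfrown{} A'$ for some $\L(\bar{d})$-selfdual $A'$ and $m < n$, so sandwiching the contracting map $x \mapsto 0^{(n-m)} {}^\smallfrown{} x$ between the two nonexpansive $\L(\bar{d})$-equivalences gives a contraction reducing $A$ to $B$. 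Now suppose $A \equiv_{\L(\bar{d})} B$ with $A \neq B$: if $A \leq_{\c(\bar{d})} B$, say $A = f^{-1}(B)$ with $f \in \c(\bar{d})$, then composing $f$ after a nonexpansive reduction $B \to A$ (which exists since $B \leq_{\L(\bar{d})} A$) exhibits $B$, hence $A$, as self-contractible, so $A \nleq_{\L(\bar{d})} \neg A$ by~(2); conversely, if $A \nleq_{\L(\bar{d})} \neg A$ then $B \in [A]_{\c(\bar{d})}$ by~(3).

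The step I expect to be genuinely delicate is the $\L(\bar{d})$-selfdual case of~(1): it is the only place where more than the bare dichotomy of~(2) is needed, namely the Cantor-space analogues of ``strictly $\Lip(\bar{d})$-comparable Borel sets admit reductions of arbitrarily small Lipschitz constant'' and of the normal form $\bigcup_{n < \omega} [0^{(n)} {}^\smallfrown{} A']_{\L(\bar{d})}$ for $\Lip(\bar{d})$-selfdual degrees. These do hold on $\mathcal{C}$ — the $\L(\bar{d})$- and $\Lip(\bar{d})$-hierarchies on Borel subsets of $\mathcal{C}$ are very good by Theorem~\ref{th:compact}, and the fine structure is obtained by rerunning the game arguments of~\cite{MottoRos:2010,MottoRos:2012c} on $\mathcal{C}$ — but verifying that this transfer is clean is the one point that is not purely routine.
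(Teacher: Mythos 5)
Your proof is correct and follows exactly the route the paper gestures at (it only cites the methods of~\cite[Section~4]{MottoRos:2012c} without spelling them out): derive (2) via Borel determinacy of the self-contraction game on $\mathcal{C}$, then obtain (1), (3), (4) by composing contractions with nonexpansive reductions, using for the $\L(\bar{d})$-selfdual case of (1) the $\omega$-chain structure of $\Lip(\bar{d})$-selfdual degrees on $\mathcal{C}$ and the arbitrarily-small-constant reductions across strict $\Lip(\bar{d})$-gaps. Two small slips worth noting: in the easy direction of (2) the chain should read $z\in A \iff f(z)\in A \iff g(f(z))\notin A$ (so that $g(f(z))=z$ gives the contradiction), and the second clause of the theorem's item (1) as literally stated fails when $A=B$ is $\L(\bar{d})$-selfdual, since $A\leq_{\c(\bar{d})}A$ holds by the first disjunct of the definition --- a harmless imprecision in the statement that your proof implicitly sidesteps by working with $A\neq B$.
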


Therefore, to describe the \( \c(\bar{d}) \)-hierarchy on Borel subsets of \( \mathcal{C} \) it is enough to  determine how many sets are 
contained in each \( \L(\bar{d}) \)-degree of an \( \L(\bar{d}) \)-selfdual Borel subset of \( \mathcal{C} \), and to combine this information 
with the well-known description of the \( \L(\bar{d}) \)-hierarchy on Borel subsets of \( \mathcal{C} \) (see~\cite{Andretta:2007}). Let us first briefly describe this last 
hierarchy. First of all, the hierarchy is semi-well-ordered. At the bottom we found the \( \L(\bar{d}) \)-nonselfdual pair constituted by 
\( [\mathcal{C}]_{\L(\bar{d})} = \{ \mathcal{C} \} \) and \( [\emptyset]_{\L(\bar{d})} = \{ \emptyset \} \). Immediately after each
 \( \L(\bar{d}) \)-nonselfdual pair \( \{ [A]_{\L(\bar{d})} , [ \neg A ]_{\L(\bar{d})} \} \) there is the \( \L(\bar{d}) \)-degree 
of the \( \L(\bar{d}) \)-selfdual set 
\( A \oplus \neg A = (0 {}^\smallfrown{} A) \cup (1 {}^\smallfrown{} (\neg A)) =  \{ 0 {}^\smallfrown{} x \mid x \in A \} \cup \{ 1 {}^\smallfrown{} x \mid x \in \neg A \} \).
On the other hand, if \( A \) is \( \L(\bar{d}) \)-selfdual, then immediately
 after \( [A]_{\L(\bar{d})} \) there is the \( \L(\bar{d})\)-degree of the selfdual set \( 0 {}^\smallfrown{} A = \{ 0 {}^\smallfrown{} x \mid x \in A \} \).
 Finally, at all limit level there is always an \( \L(\bar{d}) \)-nonselfdual pair. Therefore we get the structure represented in Figure~\ref{fig:Lhierarchy}, where bullets
 represent \( \L(\bar{d}) \)-degrees and each \( \L(\bar{d}) \)-degree is \( \L(\bar{d}) \)-reducible to another one if and only if it is (strictly) to the left 
of it.

\begin{figure}[!htbp]
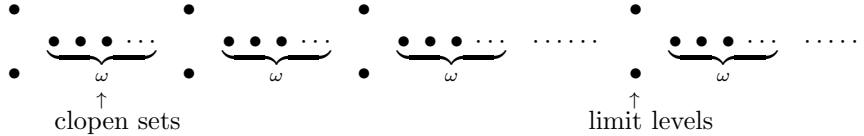

 \centering
\[
\begin{array}{llllllllll}
\bullet & & \bullet & & \bullet & & &  \bullet & &
\\
& \smash[b]{\underbrace{\bullet \; \bullet \;
\bullet \; \cdots}_{\omega}} & &
\smash[b]{\underbrace{\bullet \; \bullet \; \bullet \;
\cdots}_{\omega}} &  & \smash[b]{\underbrace{\bullet \; \bullet \;
\bullet \; \cdots}_{\omega}} & \cdots\cdots
 &   & \smash[b]{\underbrace{\bullet \; \bullet \;
\bullet \; \cdots}_{\omega}} &  \cdots\cdots
\\
\bullet & & \bullet & & \bullet & & &  \bullet & &
\\
& \quad \; \; \; \stackrel{\uparrow}{\makebox[0pt][l]{\text{clopen sets }}} & & & & 
 & &
 \stackrel{\uparrow}{\makebox[0pt][l]{\text{limit levels }}} &
\end{array}
\]
 \caption{The \( \L(\bar{d}) \)-hierarchy on Borel subsets of \( \mathcal{C} \).
}
 \label{fig:Lhierarchy}
\end{figure}

Notice that the first \(\omega\)-chain of consecutive \( \L(\bar{d}) \)-selfdual degrees contains all nontrivial clopen sets, while the first non-trivial 
\( \L(\bar{d}) \)-nonselfdual pair is formed by all proper open and proper closed subsets of \( \mathcal{C} \).

To compute the cardinality of a given \( [A]_{\L(\bar{d})} \) (for \( A \subseteq \mathcal{C} \)), recall first that if \(  \emptyset, \mathcal{C} \neq A \) 
is clopen, then there is \( 0 \neq n \in \omega \), called the \emph{level of \( A \)}  such that \( A \equiv_{\L(\bar{d})} \Nbhd_{0^{(n)}}\),
 where for an arbitrary \( s \in \pre{< \omega}{2} \) we set \( \Nbhd_s = \{ x \in \mathcal{C} \mid s \subset x \} \) --- in fact \( A \) is in the 
\( n \)-th \( \L(\bar{d}) \)-selfdual degree of the first \(\omega\)-chain of consecutive \( \L(\bar{d}) \)-selfdual degrees if and only if it is of level \( n \).

\begin{proposition} \label{prop:contractions}
Let \( \emptyset,\mathcal{C} \neq A \subseteq \mathcal{C} \).
\begin{enumerate}
\item
if \( A \) is clopen, then \( [A]_{\L(\bar{d})} \) contains exactly \( 2^{2^{n}} - 2^{2^{n-1}} \)-many sets, where \( n \) is the level of \( A \);
\item
if \( A \) is not clopen, then there is an injection \( j \colon \mathcal{C} \to [A]_{\L(\bar{d})} \).
\end{enumerate}
\end{proposition}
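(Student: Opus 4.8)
\emph{Part (1): a direct count.} The plan here is purely combinatorial. First I would record two elementary facts. (i) If $B\equiv_{\L(\bar{d})}A$ and $A$ is clopen, then $B$ is clopen too, since $B=f^{-1}(A)$ for some continuous $f$; hence $[A]_{\L(\bar{d})}$ consists entirely of clopen sets. (ii) For $m\in\omega$, a clopen $B\subseteq\mathcal{C}$ is a union of basic clopen sets $\Nbhd_s$ with $s\in\pre{m}{2}$ if and only if membership ``$x\in B$'' depends only on $x\restriction m$, and there are exactly $2^{2^m}$ such $B$ (one per $T\subseteq\pre{m}{2}$). So everything reduces to identifying $[A]_{\L(\bar{d})}$ with a set of clopen sets of the shape in (ii).

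\emph{Part (1): identifying the degree.} Let $n$ be the level of $A$. I claim $[\Nbhd_{0^{(n)}}]_{\L(\bar{d})}$ is exactly the set of clopen $B\neq\emptyset,\mathcal{C}$ whose membership depends on the first $n$ coordinates but not on the first $n-1$. For ``$\subseteq$'': if $B\leq_{\L(\bar{d})}\Nbhd_{0^{(n)}}$ via nonexpansive $f$, then $f(x)\restriction n$ is a function of $x\restriction n$, so $B=\{x: f(x)\restriction n=0^{(n)}\}$ depends only on $x\restriction n$; and $B$ cannot depend only on $x\restriction(n-1)$, else $\Nbhd_{0^{(n)}}\leq_{\L(\bar{d})}B\leq_{\L(\bar{d})}\Nbhd_{0^{(n-1)}}$, which is impossible since $\Nbhd_{0^{(n)}}$ genuinely depends on its $n$-th coordinate. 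For ``$\supseteq$'': given $B=\{x:x\restriction n\in T\}$ with $\emptyset\neq T\subsetneq\pre{n}{2}$, the nonexpansive map $f$ with $f(x)(i)=0$ for $i\neq n-1$ and $f(x)(n-1)$ equal to $0$ or $1$ according as $x\restriction n\in T$ or not witnesses $B\leq_{\L(\bar{d})}\Nbhd_{0^{(n)}}$; and if in addition $B$ does not depend on $x\restriction(n-1)$, pick $u\in\pre{n-1}{2}$ with $\Nbhd_u$ meeting both $B$ and $\neg B$, points $a\in B\cap\Nbhd_u$, $b\in\neg B\cap\Nbhd_u$, and the map sending $\Nbhd_{0^{(n)}}$ to $a$ and its complement to $b$ (nonexpansive because $a,b$ agree on the first $n-1$ coordinates) witnesses $\Nbhd_{0^{(n)}}\leq_{\L(\bar{d})}B$. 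Since $A$ has level $n$ it belongs to this set, so $[A]_{\L(\bar{d})}=[\Nbhd_{0^{(n)}}]_{\L(\bar{d})}$, and its cardinality is (number of clopen sets depending on the first $n$ coordinates) minus (those depending on the first $n-1$), i.e.\ $2^{2^n}-2^{2^{n-1}}$ by (ii).

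\emph{Part (2): a continuum of equivalent sets.} For non-clopen $A$ the plan is to exhibit an injection $\mathcal{C}\to[A]_{\L(\bar{d})}$ using the perturbation techniques of~\cite[Section~4]{MottoRos:2012c}. By fact (i), all members of $[A]_{\L(\bar{d})}$ are non-clopen, so together with Part (1) this separates clopen from non-clopen degrees by size; note also that $B\mapsto\neg B$ is a bijection $[A]_{\L(\bar{d})}\to[\neg A]_{\L(\bar{d})}$, so I may replace $A$ by $\neg A$ if convenient. I would fix a boundary point $z\in\partial A$ and a sequence $(V_k)_{k\in\omega}$ of pairwise disjoint basic clopen sets shrinking to $z$ (so $V_k\subseteq\Nbhd_{z\restriction m_k}$ with $m_k\to\infty$) chosen with $V_k\subseteq\neg A$ and with the sibling cylinder of $V_k$ meeting both $A$ and $\neg A$; then for each $S\subseteq\omega$ with $\omega\setminus S$ infinite I set
\[ A_S \;=\; A\cup\bigcup\nolimits_{k\in S}V_k. \]
The sets $A_S$ are Borel, pairwise distinct (for $k\in S\triangle S'$ exactly one of $A_S,A_{S'}$ contains $V_k$), and form a family of size continuum, so it suffices to show each $A_S\equiv_{\L(\bar{d})}A$. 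For $A_S\leq_{\L(\bar{d})}A$ I would use the map equal to the identity off $\bigcup_{k\in S}V_k$ and collapsing each $V_k$ ($k\in S$) onto a suitably chosen point $a_k\in A$ lying in the sibling cylinder of $V_k$; a short check (the displacement from $V_k$ to $a_k$ stays inside the parent cylinder of $V_k$) shows this is nonexpansive and reduces $A_S$ to $A$. For $A\leq_{\L(\bar{d})}A_S$ I would take a nonexpansive retraction of $\mathcal{C}$ onto the closed set $\mathcal{C}\setminus\bigcup_{k\in S}V_k$ arranged to send each removed $V_k$ onto a point $b_k\in\neg A$ near $V_k$; its range then misses $\bigcup_{k\in S}V_k$ and it preserves membership in $A$, so it reduces $A$ to $A_S$. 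Distinct $S$ give distinct $A_S$, which yields the required injection.

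\emph{The main obstacle.} The delicate point is producing the $V_k$'s: demanding $V_k\subseteq\neg A$ with a ``mixed'' sibling need not be possible near an arbitrary boundary point — e.g.\ if $A$ is dense and codense near $z$ there are no clopen subsets of $\neg A$ near $z$ at all. Handling this requires a case analysis on the local structure of $A$ near $z$ (whether interior points of $A$, or of $\neg A$, accumulate at $z$, settled by a Baire-category / Cantor--Bendixson argument applied to $\partial A$) together with the finer constructions of~\cite[Section~4]{MottoRos:2012c}; in the residual case one perturbs $A$ on a suitable closed nowhere dense subset near $z$ instead of on clopen pieces, exploiting that such an $A$ lies high enough in the hierarchy for these perturbations to preserve the $\L(\bar{d})$-degree.
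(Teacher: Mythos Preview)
Your Part~(1) is correct and is the paper's count, worked out in more detail.

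Part~(2) has a real gap. The perturbation \( A_S = A\cup\bigcup_{k\in S}V_k \) with clopen \( V_k\subseteq\neg A \) accumulating at a boundary point works when such \( V_k \) exist, but --- as you yourself flag --- when \( A \) is dense and codense near every boundary point there are no nonempty clopen subsets of \( \neg A \) there at all, and the scheme collapses. Your proposed remedy (perturb on a closed nowhere dense set, justified by ``\( A \) lies high enough in the hierarchy'') is only a gesture: you do not specify the perturbation, why distinct parameters give distinct sets, or --- the crux --- why the \( \L(\bar d) \)-degree is preserved in both directions. The reference to~\cite[Section~4]{MottoRos:2012c} does not supply this.

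The paper avoids boundary analysis entirely by exploiting the structure theory already recorded in Theorem~\ref{th:contractions}. It first reduces to the nonselfdual case: any non-clopen \( \L(\bar d) \)-selfdual \( A \) is \( \equiv_{\L(\bar d)} 0^{(n)}{}^\smallfrown(B\oplus\neg B) \) for some nonselfdual \( B\neq\emptyset,\mathcal{C} \), and an injection \( j' \) into \( [B]_{\L(\bar d)} \) lifts via \( x\mapsto 0^{(n)}{}^\smallfrown(j'(x)\oplus\neg j'(x)) \). For nonselfdual \( A \): if \( A \) is proper open, \( x\mapsto\mathcal{C}\setminus\{x\} \) works. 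Otherwise every proper closed set is \( \leq_{\L(\bar d)}A \), and the key input is Theorem~\ref{th:contractions}(2): \( A \) is self-contractible, so there is a contraction \( f \) with \( f^{-1}(A)=A \), and any contraction on \( \mathcal{C} \) has range inside a single half \( \Nbhd_{\langle i\rangle} \). This frees the other half for modification, and the injection is simply \( x\mapsto(A\cap\Nbhd_{\langle i\rangle})\cup\{(1{-}i){}^\smallfrown x\} \): \( f \) witnesses \( A\leq_{\L(\bar d)}A_x \), and the singleton \( \{(1{-}i){}^\smallfrown x\} \), being proper closed and hence \( \leq_{\L(\bar d)}A \), gives \( A_x\leq_{\L(\bar d)}A \). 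No case analysis on the local topology of \( A \) is needed.
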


\begin{proof}
For each \( 0 \neq n \in \omega \), the collection of all clopen sets \( \L(\bar{d}) \)-reducible to \( \Nbhd_{0^{(n)}} \) consists of all the sets of the 
form \( \bigcup_{s \in S} \Nbhd_s \) for \( S \) a subset of \( \{ s \in \pre{<\omega}{2} \mid \leng(s) = n \} \): therefore there are \( 2^{2^n} \)-many
 such sets. So if \( A \) is a clopen set of level \( n \), then to compute the cardinality of \( [A]_{\L(\bar{d})} \) we have to subtract to \( 2^{2^n} \)  
the number of sets which are not \( \L(\bar{d}) \)-equivalent to \( \Nbhd_{0^{(n)}} \), i.e.\ \( \emptyset \), \( \mathcal{C} \), and all sets 
\( \L(\bar{d}) \)-reducible to \( \Nbhd_{0^{(n-1)}} \): since there are \( 2^{2^{n-1}} \)-many such sets, we get that \( [A]_{\L(\bar{d})} \) 
contains exactly \( 2^{2^{n}} - 2^{2^{n-1}} \)-many sets.

For the second part, let us first assume that \( A \nleq_{\L(\bar{d})} \neg A \). If \( A \) is a proper open set, then the map 
\( j \colon \mathcal{C} \to [A]_{\L(\bar{d})} \colon x \mapsto \mathcal{C} \setminus \{ x \} \) is as required. Therefore we can assume without 
loss of generality that \( B \leq_{\L(\bar{d})} A \) for every proper closed set \( B \). By Theorem~\ref{th:contractions}(2), there is 
\( f \in \c(\bar{d}) \) such that \( f^{-1}(A) = A \). Let \( i =0,1 \) be such that \( f(\mathcal{C}) \subseteq \Nbhd_{\langle i \rangle} \), and consider the map
 \[ 
j \colon \mathcal{C} \to [A]_{\L(\bar{d})} \colon x \mapsto A_x = (A \cap \Nbhd_{\langle i \rangle}) \cup \{ (1-i) {}^\smallfrown{} x \}.
\]
Clearly \( j \) is an injection, so it remains only to show that \( A \equiv_{\L(\bar{d})} A_x \) for every \( x \in \mathcal{C} \). For one direction, \( f \) 
witnesses \( A \leq_{\L(\bar{d})} A_x \). For the other direction, let \( g_x \in \L(\bar{d}) \) be a reduction of \( \{ (1-i) {}^\smallfrown{} x \} \) 
to \( A \): then \( (\id_{\mathcal{C}} \restriction \Nbhd_{\langle i \rangle}) \cup (g_x \restriction \Nbhd_{\langle (1-i) \rangle}) \) witnesses 
\( A_x \leq_{\L(\bar{d})} A \).

Finally, let \( A \) be \( \L(\bar{d}) \)-selfdual. Since by case assumption \( A \) is not clopen, there is an \( \L(\bar{d}) \)-nonselfdual 
\( B \neq \emptyset, \mathcal{C} \) and \( n \in \omega \) such that
 \( A \equiv_{\L(\bar{d})} 0^{(n)} {}^\smallfrown{}  (B \oplus \neg B) \). Let \( j' \colon \mathcal{C} \to [B]_{\L(\bar{d})} \) 
be an injective map: then
\[ 
j \colon \mathcal{C} \to [A]_{\L(\bar{d})} \colon x \mapsto 0^{(n)} {}^\smallfrown{} (j'(x) \oplus \neg j'(x)) 
 \] 
is clearly as required.
\end{proof}

Since by Theorem~\ref{th:contractions} the \( \c(\bar{d}) \)-hierarchy on Borel subsets of \( \mathcal{C} \) is the refinement of the 
\( \L(\bar{d}) \)-hierarchy obtained by splitting each \( \L(\bar{d}) \)-selfdual degree into the singletons of its elements, 
using Proposition~\ref{prop:contractions} we can represent such hierarchy as in Figure~\ref{fig:chierarchy}, where the 
bullets represent the \( \c(\bar{d}) \)-degrees and the boxes around them represent the \( \L(\bar{d}) \)-degrees they come from (notice that by Proposition~\ref{prop:contractions}(1) in the second column there are \( 12 \) different \( \c(\bar{d}) \)-degrees, while in the third column we already find \( 240 \) distinct \( \c(\bar{d}) \)-degrees!).

\begin{figure}[!htbp]
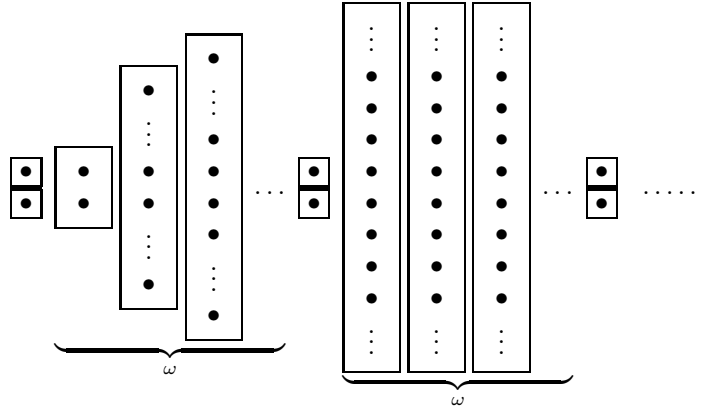

 \centering
\[
\begin{array}{c}
\phantom{\vdots}\\
\phantom{\bullet} \\
\framebox{$\bullet$} \\
\framebox{$\bullet$} \\
\phantom{\bullet} \\
\phantom{\vdots}
\end{array}%
\smash[b]{
\underbrace{
\framebox{$
\begin{array}{c}
\bullet \\
\bullet 
\end{array}
$} \;
\framebox{$
\begin{array}{c}
\bullet\\
\vdots \\
\bullet \\
\bullet\\
\vdots\\
\bullet
\end{array}
$} \;
\framebox{$
\begin{array}{c}
\bullet\\
\vdots \\
\bullet\\
\bullet \\
\bullet\\
\bullet\\
\vdots\\
\bullet
\end{array}
$} \;
\dotsc
}_{\omega}}
\begin{array}{c}
\phantom{\vdots}\\
\phantom{\bullet} \\
\framebox{$\bullet$} \\
\framebox{$\bullet$} \\
\phantom{\bullet} \\
\phantom{\vdots}
\end{array}
\smash[b]{
\underbrace{
\framebox{$
\begin{array}{c}
\vdots \\
\bullet\\
\bullet\\
\bullet \\
\bullet\\
\bullet \\
\bullet \\
\bullet \\
\bullet \\
\vdots
\end{array}
$} \;
\framebox{$
\begin{array}{c}
\vdots \\
\bullet\\
\bullet \\
\bullet\\
\bullet \\
\bullet\\
\bullet \\
\bullet\\
\bullet\\
\vdots
\end{array}
$} \;
\framebox{$
\begin{array}{c}
\vdots \\
\bullet\\
\bullet\\
\bullet \\
\bullet\\
\bullet \\
\bullet \\
\bullet\\
\bullet\\
\vdots
\end{array}
$} \;
\dotsc
}_{\omega}}
\begin{array}{c}
\phantom{\vdots}\\
\phantom{\bullet} \\
\framebox{$\bullet$} \\
\framebox{$\bullet$} \\
\phantom{\bullet} \\
\phantom{\vdots}
\end{array}
\;
\dotsc \dotsc 
\]
\vspace{1cm}
 \caption{The \( \c(\bar{d}) \)-hierarchy on Borel subsets of \( \mathcal{C} \).}
 \label{fig:chierarchy}
\end{figure}

Proposition~\ref{prop:contractions} and Theorem~\ref{th:contractions} also imply the following corollary.

\begin{corollary}\label{cor:contractions}
\begin{enumerate}[(1)]
\item
The \( \c(\bar{d}) \)-hierarchy \( \Deg_{\boldsymbol{\Delta}^1_1}(\c(\bar{d})) \) on Borel subsets of \( \mathcal{C} \) is bad but not very bad. In fact it contains antichains of size the continuum.
\item
The \( \c(\bar{d}) \)-hierarchy \( \Deg_{\boldsymbol{\Sigma}^0_1 \cup \boldsymbol{\Pi}^0_1}(\c(\bar{d})) \) on open or closed subsets of \( \mathcal{C} \) is good but not very good.
\end{enumerate}
\end{corollary}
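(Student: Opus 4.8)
The plan is to derive both items directly from Theorem~\ref{th:contractions}, Proposition~\ref{prop:contractions}, and the description of the \( \L(\bar{d}) \)-hierarchy on Borel subsets of \( \mathcal{C} \) recalled around Figure~\ref{fig:Lhierarchy} (in particular, its semi-well-orderedness).

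\emph{Part (1).} To see that \( \Deg_{\boldsymbol{\Delta}^1_1}(\c(\bar{d})) \) is bad --- indeed, that it has antichains of size the continuum --- I would fix any \( \L(\bar{d}) \)-selfdual Borel set \( A \subseteq \mathcal{C} \) which is \emph{not} clopen (such sets exist by the recalled description, e.g.\ \( A = O \oplus \neg O \) for \( O \) a proper open non-clopen subset of \( \mathcal{C} \)). By Proposition~\ref{prop:contractions}(2) there is an injection \( j \colon \mathcal{C} \to [A]_{\L(\bar{d})} \). Each \( j(x) \) is \( \L(\bar{d}) \)-equivalent to \( A \), hence \( \L(\bar{d}) \)-selfdual, so by Theorem~\ref{th:contractions}(3) its \( \c(\bar{d}) \)-degree is the singleton \( \{ j(x) \} \); thus distinct \( x \) give distinct \( \c(\bar{d}) \)-degrees. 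Moreover, for \( x \neq y \), since \( j(x) \equiv_{\L(\bar{d})} j(y) \) and \( j(x) \leq_{\L(\bar{d})} \neg j(x) \), Theorem~\ref{th:contractions}(1) gives \( j(x) \nleq_{\c(\bar{d})} j(y) \), and symmetrically \( j(y) \nleq_{\c(\bar{d})} j(x) \); hence \( \{ [j(x)]_{\c(\bar{d})} \mid x \in \mathcal{C} \} \) is an antichain of size \( 2^{\aleph_0} \). To see that the hierarchy is \emph{not} very bad, I would observe that it has no infinite descending chain: if \( A_0 >_{\c(\bar{d})} A_1 >_{\c(\bar{d})} \cdots \) were one, then by Theorem~\ref{th:contractions}(4) the sets \( A_n \) would form an infinite descending chain in the \( \L(\bar{d}) \)-hierarchy on Borel subsets of \( \mathcal{C} \), contradicting its well-foundedness. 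Together these yield that \( \Deg_{\boldsymbol{\Delta}^1_1}(\c(\bar{d})) \) is bad but not very bad.

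\emph{Part (2).} The idea is to first make the ``block structure'' of \( \Deg_{\boldsymbol{\Sigma}^0_1 \cup \boldsymbol{\Pi}^0_1}(\c(\bar{d})) \) explicit and then read off both halves. On open-or-closed sets, the \( \L(\bar{d}) \)-hierarchy consists of the bottom nonselfdual pair \( \{ [\emptyset]_{\L(\bar{d})}, [\mathcal{C}]_{\L(\bar{d})} \} \), followed by the \( \omega \)-chain of selfdual degrees formed by the clopen sets of each level \( n \geq 1 \), and, above all of these, the single nonselfdual pair \( \{ [O]_{\L(\bar{d})}, [F]_{\L(\bar{d})} \} \) of the proper open non-clopen, resp.\ proper closed non-clopen, sets. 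Applying Theorem~\ref{th:contractions}(3): the four nonselfdual \( \L(\bar{d}) \)-degrees are unchanged (they remain \( \c(\bar{d}) \)-degrees), while each clopen level \( n \) splits into its \( 2^{2^n}-2^{2^{n-1}} \) singletons (Proposition~\ref{prop:contractions}(1)). By Theorem~\ref{th:contractions}(1) the \( \c(\bar{d}) \)-degrees inside a fixed level form an antichain, and by Theorem~\ref{th:contractions}(4) the blocks \( \{ [\emptyset], [\mathcal{C}] \} \), then the successive levels, then \( \{ [O], [F] \} \), are linearly ordered with each one entirely below the next. Consequently every \( \c(\bar{d}) \)-antichain is contained in a single (finite) block, and every descending chain meets each block in at most one point, the blocks being indexed by the well-order \( \omega + 1 \); so all antichains and descending chains are finite, i.e.\ the hierarchy is good. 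Since a good hierarchy is well-founded, to show it is not very good it suffices to violate \( \SLO^{\c(\bar{d})} \): take \( A = \Nbhd_{00} \) and \( B = \Nbhd_{01} \), two distinct clopen sets of level \( 2 \) with \( \neg B \neq A \). As \( A \), \( B \), \( \neg B \) are pairwise \( \L(\bar{d}) \)-equivalent and \( \L(\bar{d}) \)-selfdual, Theorem~\ref{th:contractions}(1) gives both \( A \nleq_{\c(\bar{d})} B \) and \( \neg B \nleq_{\c(\bar{d})} A \), so \( \SLO^{\c(\bar{d})} \) fails on open-or-closed subsets of \( \mathcal{C} \).

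The only step requiring care is the bookkeeping in part (2): one must be sure that splitting the selfdual clopen levels into singletons creates no comparability or incomparability \emph{between} distinct levels, or between a level and one of the two extreme nonselfdual pairs --- but this is precisely what Theorem~\ref{th:contractions}(4) rules out. Once the block picture is in place, both ``good'' and ``not very good'' are immediate, and part (1) is essentially a one-line consequence of Proposition~\ref{prop:contractions}(2) together with Theorem~\ref{th:contractions}(3)--(4).
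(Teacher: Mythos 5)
Your argument is correct and carries out exactly the deduction the paper leaves implicit when it says the corollary follows from Theorem~\ref{th:contractions} and Proposition~\ref{prop:contractions}. Part~(1) correctly combines Proposition~\ref{prop:contractions}(2) with Theorem~\ref{th:contractions}(3) to get a continuum-size antichain of selfdual singletons, and Theorem~\ref{th:contractions}(4) with the well-foundedness of \(\leq_{\L(\bar{d})}\) to rule out infinite descending chains. Part~(2) is also right, including the mildly subtle point that you must go to level \(2\) rather than level~\(1\) to violate \( \SLO^{\c(\bar{d})} \) (at level~\(1\) the only non-identical pairs are complementary, so the \(\SLO\) disjunction holds vacuously via \( A = A \)), and the block argument via Theorem~\ref{th:contractions}(4) correctly shows that antichains and descending chains are confined to finitely many finite blocks.
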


Notice that Corollary~\ref{cor:contractions}(2) gives a partial answer to~\cite[Question 6.3]{MottoRos:2012c}. However, such solution is not completely satisfactory, as we needed to restrict our hierarchy to a very small class of subsets of \( \mathcal{C} \) --- of course it would be more interesting to find a reducibility \( \F \) (on some Polish space \( X \)) inducing a good but not very good hierarchy on the entire collection of Borel subsets of \( X \) (or, under \( \AD \), even on the entire \( \pow(X) \)). This last problem seems to be completely open, but the next example shows that if the requirement that the preorder inducing the hierarchy be of the form \( \leq_\F \) (for some reducibility \( \F \) on \( X \)) is dropped, then one can obtain a ``natural'' hierarchy on the collection of all Borel subsets of \( \pre{\omega}{\omega} \) which is good but not very good .

\begin{example}
Given a set $R\subseteq\mathbb{R}^+$ and $A,B \subseteq \pre{\omega}{\omega}$ such that \( A \leq_{\Lip(\bar{d})} B \), let 
\( L_{A,B} = \inf \{  0 < L \in \RR^+ \mid A \leq_{\Lip(\bar{d},L)} B \}\), where \( \leq_{\Lip(\bar{d},L)} \) is as in Definition~\ref{def:almost}. Then set 
\[
A \leq_R B \iff  A \leq_{\Lip(\bar{d})} B \wedge L_{A,B}\in R\cup\{0,1\}.
\] 
Notice that \( \leq_R \) is always reflexive: in fact, either \( A \nleq_{\Lip(\bar{d},L)} A \) for all \( L < 1 \) (in which case the identity function witnesses \( L_{A,A} = 1 \)), or else by considering arbitrarily large powers of any witness of \( A \leq_{\Lip(\bar{d}),L} A \) (for some \( L < 1 \)) we see that \( L_{A,A}  =  0\). In contrast, notice that in general $\leq_{R}$ need not to be transitive. However, when \( \leq_R \) actually happens to be a preorder (as in all the relevant cases considered below), then with a little abuse of terminology we can consider the \( \leq_R \)-hierarchy on Borel subsets of \( \pre{\omega}{\omega} \) (with the obvious definition).

Using the methods introduced at the end of~\cite[Section 4]{MottoRos:2012c}, it is easy to see that if \( A,B \subseteq \pre{\omega}{\omega} \) are Borel sets such that \( A <_{\Lip(\bar{d})} B \), then also \( A <_R B \), because in this case \( L_{A,B} = 0 \). Moreover, since by~\cite[Corollary 4.4]{MottoRos:2012c} if \( A \subseteq \pre{\omega}{\omega} \) is \( \Lip(\bar{d}) \)-nonselfdual (equivalently: \( \L(\bar{d}) \)-nonselfdual), then \( A \leq_{\Lip(\bar{d},L)} A \) for every \( L > 0 \), we get that for such an \( A \), \( A \leq_{\L(\bar{d})} B \Rightarrow A \leq_R B \) for every \( B \subseteq \pre{\omega}{\omega} \), and if \( R \subseteq (0,1] \) we in fact have that \( A \leq_{\L(\bar{d})} B \iff A \leq_R B \). 
Finally, if \( A \subseteq \pre{\omega}{\omega} \) is \( \L(\bar{d}) \)-selfdual and \( B \in [A]_{\L(\bar{d})} \), then \( A \equiv_R B \) because~\cite[Proposition 4.2]{MottoRos:2012c} implies that all the witnesses of \( A \leq_{\L(\bar{d})} B \) and \( B \leq_{\L(\bar{d})} A \) cannot have Lipschitz constant \( < 1 \). 
Summing up, we get  that if \( R \subseteq (0,1] \) (and \( \leq_R \) is transitive), then the $\leq_R$-hierarchy refines the $\mathsf{L}(\bar{d})$-hierarchy, and may differ from it  only within the $\mathsf{Lip}(\bar{d})$-selfdual degrees.

Let us now concentrate on the canonical examples given by $R_n=(0, 2^{-n}]$ (for \( n \in \omega \)). It is easy to check that if \( n \leq 1 \), 
then the \( \leq_R \)-hierarchy coincides with the \( \L(\bar{d}) \)-hierarchy. However, if \( n > 1 \) and  $A$ is an $\L(\bar{d})$-selfdual set,  then 
\begin{equation} \tag{$\dagger$} \label{eq:dagger}
A\leq_{R_n} B \iff A \leq_{\L(\bar{d})} B \wedge (B \equiv_{\L(\bar{d})} A  \vee 0^{(n)} {}^\smallfrown{}  A \leq_{\mathsf{L}(\bar{d})} B).
\end{equation}
Therefore the restriction of \( \leq_{R_n} \) to the Borel subsets of 
\( \pre{\omega}{\omega} \) is always transitive (hence a preorder), and it is also well-founded. Moreover, \eqref{eq:dagger} also implies that the 
antichains in $\leq_{R}$ have always size \( \leq n \). Since e.g.\  \( \{ 0^{(i+1)} {}^\smallfrown{} \pre{\omega}{\omega} \mid i < n \} \) is an \( \leq_{R_n} \)-antichain of size precisely \( n \) consisting of clopen sets, we get that for all $n\geq 3$ the $\leq_{R_n}$-hierarchy on Borel subsets of \( \pre{\omega}{\omega} \) is good
 but not very good. 
\end{example}

\section{Wadge-like reducibilities and the Axiom of Choice} \label{sec: choice}

By (the comment following) Proposition~\ref{prop:continuous}, the \( \L(\bar{d}) \)-hierarchy \( \Deg_{\boldsymbol{\Delta}^1_1}(\L(\bar{d})) \) on the Borel subsets of 
\( \pre{\omega}{\omega} \) is very good, and as already recalled the same is true for larger classes of subsets of 
\( \pre{\omega}{\omega} \) if we further assume corresponding determinacy axioms. It is therefore natural to ask what happens if, 
instead of assuming such determinacy principles, we assume the Axiom of Choice \( \AC \) or other strong choice principles. 

Similar considerations apply to arbitrary Polish spaces as well. It is shown in~\cite{Schlicht:2012} that for every non-zero-dimensional Polish space 
\( X \) the \( \W(X) \)-hierarchy  on Borel subsets of \( X \) already contains antichains of size the continuum, and in fact~\cite{Ikegami:2012} shows that 
if e.g.\ \( X = \RR \) then we can also embed \( (\pow(\omega), \subseteq^*) \) into \( \Deg_{\boldsymbol{\Delta}^1_1}(\W(X)) \) (but this last result 
cannot be extended to arbitrary \( X \): as explained in~ \cite[Section 5.1]{MottoRos:2012b}, all continuous functions on the Cook continuum \( X \) 
are either constant or the identity, and therefore all chains of subsets of \( X \) with respect to continuous reducibility have length \( \leq 2 \)).
 However,~\cite{MottoRos:2012b} shows that for every Polish space \( X \), the \( \D_\alpha(X) \)-hierarchy on Borel subsets of \( X \) (where 
\( \D_\alpha(X) \) denotes the collection of all \( \boldsymbol{\Delta}^0_\alpha \)-functions from \( X \) to itself) is always very good for 
\( \alpha \geq \omega \), and that the same is true for \( \alpha \geq 3 \) if \( X \) is of dimension \( \neq \infty \). Also these last results extend to larger
 classes of subsets of \( X \) under suitable determinacy assumptions, and therefore it is meaningful to ask what happens if instead we assume \( \AC \).

Not surprisingly, it turns out that under choice all the above mentioned hierarchies of degrees (on arbitrary subsets of \( X \)) become very bad. 
Clearly, Borel determinacy forces us to consider non-Borel subsets of \( X \) to get such results: therefore in what follows we will concentrate only on
 uncountable (ultrametric) Polish spaces.
%
%

\begin{notation}
If $X$ is a set and $A\subseteq X^2$, we denote by \( A_x \) the ``vertical section'' determined by \( x \in X \), i.e.\ we set  $A_x=\{y\in X\mid (x,y)\in A\}$. Moreover, for every cardinal \( \mu \) we set $[X]^{\mu}=\{Y\subseteq X \mid |Y|=\mu\}$. 
\end{notation}

\begin{lemma} [\(\AC \)] \label{lem:choice}
Let \(\mu\) be an infinite cardinal and $X$ be a set of size $\mu$. Moreover, let $\mathcal{C} \subseteq [X]^{\mu}$, $\F$ be a collection of functions from \( X \) to itself, and suppose that \( | \mathcal{C} | = | \F | = \mu \). Then there is a set $A\subseteq X^2$ such that $A_{x}\cap C \nleq_{\F} A_{y}$ for all distinct $x,y \in X$ and all $C\in\mathcal{C}$. 
\end{lemma}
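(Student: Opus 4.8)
The plan is to build $A$ by a transfinite diagonalisation of length $\mu$, killing one potential reduction at each stage. Using $\AC$, I would first fix a well-ordering of $X$ and an enumeration $\langle (x_\alpha, y_\alpha, C_\alpha, f_\alpha) \mid \alpha < \mu \rangle$ of the set $T$ of all quadruples $(x,y,C,f)$ with $x,y \in X$ distinct, $C \in \mathcal{C}$, $f \in \F$; this is possible because $|T| = \mu$ (it is at least $|\mathcal{C}| = \mu$, fixing two distinct points and one $f$, and at most $|X|^2 \cdot |\mathcal{C}| \cdot |\F| = \mu$). Then I would construct an increasing chain $\langle p_\alpha \mid \alpha \le \mu \rangle$ of \emph{partial} functions $p_\alpha \colon X^2 \to 2$ with $|\operatorname{dom} p_\alpha| \le 2 \cdot |\alpha| < \mu$, and finally set $A = \{ (x,z) \in X^2 \mid p_\mu(x,z) = 1 \}$, where $p_\mu = \bigcup_{\alpha < \mu} p_\alpha$ and pairs outside its domain are declared not to lie in $A$. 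The target invariant is that passing from $p_\alpha$ to $p_{\alpha+1}$ installs a witness against $f_\alpha$ being a reduction of $A_{x_\alpha} \cap C_\alpha$ to $A_{y_\alpha}$.

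For the recursion step, given $p_\alpha$: since $|\operatorname{dom} p_\alpha| < \mu = |C_\alpha|$, the set $\{ w \in C_\alpha \mid (x_\alpha, w) \in \operatorname{dom} p_\alpha \}$ is a proper subset of $C_\alpha$, so using the well-ordering of $X$ I can take the least $w_\alpha \in C_\alpha$ with $(x_\alpha, w_\alpha) \notin \operatorname{dom} p_\alpha$. As $x_\alpha \neq y_\alpha$, the pair $(y_\alpha, f_\alpha(w_\alpha))$ is distinct from $(x_\alpha, w_\alpha)$. If $(y_\alpha, f_\alpha(w_\alpha)) \in \operatorname{dom} p_\alpha$, say with value $c$, set $p_{\alpha+1} = p_\alpha \cup \{ ((x_\alpha, w_\alpha), 1-c) \}$; otherwise set $p_{\alpha+1} = p_\alpha \cup \{ ((x_\alpha, w_\alpha), 1),\ ((y_\alpha, f_\alpha(w_\alpha)), 0) \}$. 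In either case $p_{\alpha+1}$ is a partial function extending $p_\alpha$, its domain grows by at most $2$, and $p_{\alpha+1}(x_\alpha, w_\alpha) \neq p_{\alpha+1}(y_\alpha, f_\alpha(w_\alpha))$. Limit stages are handled by unions, which keeps the bound $|\operatorname{dom} p_\alpha| < \mu$ for $\alpha < \mu$.

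To conclude, given distinct $x,y \in X$, $C \in \mathcal{C}$ and $f \in \F$, pick $\alpha$ with $(x_\alpha, y_\alpha, C_\alpha, f_\alpha) = (x,y,C,f)$. Since the chain is increasing, the values $p_{\alpha+1}(x, w_\alpha)$ and $p_{\alpha+1}(y, f(w_\alpha))$ are never overwritten, so $p_\mu(x, w_\alpha) \neq p_\mu(y, f(w_\alpha))$, both being defined. Because $w_\alpha \in C$, membership of $w_\alpha$ in $A_x \cap C$ is equivalent to $p_\mu(x, w_\alpha) = 1$, and membership of $w_\alpha$ in $f^{-1}(A_y)$ is equivalent to $p_\mu(y, f(w_\alpha)) = 1$; hence exactly one of these holds, so $A_x \cap C \neq f^{-1}(A_y)$. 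As $f \in \F$ was arbitrary, $A_x \cap C \nleq_\F A_y$, as required.

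I expect the substance of the argument to be entirely routine; the only place needing care is the bookkeeping in the recursion step. One must ensure that a ``free'' witness $w \in C_\alpha$ is always available and that committing it clashes with neither earlier nor later commitments. This is arranged by (i) only ever fixing the value of a pair with first coordinate $x_\alpha$ when that pair is currently undetermined — possible because strictly fewer than $|C_\alpha| = \mu$ pairs have been decided so far, which is exactly where the hypotheses $|C| = \mu$ and, through the length-$\mu$ enumeration, $|\F| = \mu$ get used — and (ii) using $x_\alpha \neq y_\alpha$ to guarantee $(x_\alpha, w_\alpha) \neq (y_\alpha, f_\alpha(w_\alpha))$, so that the one or two pairs assigned at stage $\alpha$ are genuinely distinct and $p_{\alpha+1}$ remains a function. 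It is crucial that we need only defeat the $\mu$ many functions in $\F$, rather than all functions $X \to X$.
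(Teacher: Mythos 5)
Your proof is correct and follows essentially the same transfinite-recursion strategy as the paper: enumerate all relevant quadruples $(x,y,C,f)$ in order type $\mu$, and at each stage commit one or two points so as to defeat the potential reduction $f$ of $A_x \cap C$ to $A_y$, keeping the committed set of size $<\mu$ so that a free witness in $C$ is always available. Your partial function $p \colon X^2 \to 2$ is exactly the paper's bookkeeping via the disjoint sets $A_{x,\alpha}, B_{x,\alpha}$, with value $1$ playing the role of membership in $A_{x,\alpha}$ and value $0$ that of membership in $B_{x,\alpha}$. The one place where you are slightly cleaner is the successor step: the paper splits on whether $|f(C_0)| < \mu$, and in that case invokes a collision argument (choosing $a,b \in C_0$ with $f(a)=f(b)$ and placing them in $A_x$ and $B_x$ respectively), whereas you treat all cases uniformly by observing that if $p(y_\alpha, f_\alpha(w_\alpha))$ is already defined you can simply set $p(x_\alpha, w_\alpha)$ to its complement, and otherwise assign $1$ and $0$ to the two fresh keys. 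Both versions ultimately rest on $x_\alpha \neq y_\alpha$ to keep the two keys distinct; you make this explicit by only enumerating quadruples with $x \neq y$, while the paper's surjection $h \colon \mu \to \mathcal{C} \times \F \times X^2$ does not exclude $x = y$, leaving that (vacuous) case to be silently skipped.
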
 

\begin {proof} 
We first recursively construct a sequence  $ (\{ A_{x,\alpha}, B_{x,\alpha} \mid \ x\in X \})_{\alpha<\mu}$ such that $A_{x,\alpha}\cap B_{x,\alpha}=\emptyset$, 
 $A_{x,\alpha}\subseteq A_{x,\beta}$, 
 $B_{x,\alpha}\subseteq B_{x,\beta}$, and 
$|A_{x,\alpha}\cup B_{x,\alpha}|\leq |2 \cdot\alpha|$ 
for all $\alpha \leq \beta<\mu$ and $x\in X$. 

Fix a surjection $h\colon \mu\rightarrow \mathcal{C}\times \mathcal{F}\times X^2$, and
set $A_{x,0}=B_{x,0}=\emptyset$ for all $x\in X$. Let now \( 0 < \alpha < \mu \), and assume that all sets of the form \( A_{x,\beta},B_{x,\beta} \) for \( x \in X \) and \( \beta < \alpha \) have already been defined, so that we can set $A_{x,<\alpha}=\bigcup_{\beta<\alpha} A_{x,\beta}$ and $B_{x,<\alpha}=\bigcup_{\beta<\alpha} B_{x,\beta}$. Let \( (C,f,x,y) \in \mathcal{C} \times \F \times X^2 \) be such that $h(\alpha)=(C,f,x,y)$, and let $C_0=C\setminus (A_{x,<\alpha}\cup B_{x,<\alpha})$. Notice that $| C_0| = \mu$ because $|A_{x,<\alpha}\cup B_{x,<\alpha} | < \mu \) and $|C| = \mu$. 
We distinguish two cases: if $|f(C_0)|<\mu$, we choose distinct $a,b\in C_0$ such that $f(a)=f(b)$ (this is possible because $| C_0| = \mu > |f(C_0)|$), and then we set $A_{x,\alpha}= A_{x,<\alpha}\cup \{a\}$, $B_{x,\alpha}= B_{x,<\alpha}\cup \{b\}$, and \( A_{z,\alpha} = A_{z, <\alpha} \), \( B_{z, \alpha} = B_{z,<\alpha} \) for all \( z \in X \) distinct from \( x \). 
If instead $|f(C_0)|=\mu$, we pick some $a\in C_0$ with $f(a)\notin A_{y,<\alpha}\cup B_{y,<\alpha}$ (which exists because $|A_{y,<\alpha}\cup B_{y,<\alpha}| < \mu$, and hence $f(C_0) \setminus (A_{y,<\alpha}\cup B_{y,<\alpha}) \neq \emptyset$), and then we set $A_{x,\alpha}=A_{x,<\alpha}\cup\{a\}$, $B_{x,\alpha} = B_{x,<\alpha} $, \( A_{y,\alpha} = A_{y,<\alpha} \), $B_{y,\alpha}=B_{y,<\alpha}\cup\{f(a)\}$, and \( A_{z,\alpha} = A_{z,<\alpha} \), \( B_{z,\alpha} = B_{z,<\alpha} \) for all \( z \in X \) distinct from \( x \) and \( y \). This completes the recursive step of our construction, and it is easy to check by induction on \( \alpha < \mu \) that the sets \( A_{x,\alpha} \), \( B_{x,\alpha} \) are as required.

Finally, we set $A_x=\bigcup_{\alpha<\mu} A_{x,\alpha}$, \( B_x = \bigcup_{\alpha < \mu} B_{x,\alpha} \), and \( A = \{ (x,y) \in X^2 \mid y \in A_x \} \), so that, in particular, \( A_x \cap B_x = \emptyset \) for every \( x \in X \). It is straightforward to check that  the $\alpha$-th step in the recursive construction above ensures that \( f \) is not a reduction of $A_x\cap C$ to $A_y$, because either there are \( a \in A_x\cap C \) and \( b \in B_x \subseteq X \setminus A_x \) such that \( f(a) = f(b) \), or else there is \( a \in A_x\cap C \) such that \( f(a) \in B_y \subseteq X \setminus A_y \). 
\end{proof} 

\begin{theorem}[\(\AC \)] \label{th:illfounded hierarchy}
Let $X = (X,d)$ be an uncountable ultrametric Polish space. Then there is a 
map \( \psi \colon \pow(\omega) \to \pow(X) \) such that for all \( a,b \subseteq \omega \)
\begin{enumerate}
\item
if \( a \subseteq b \), then \( \psi(a) \leq_{\L(d)} \psi(b) \);
\item
if \( \psi(a) \leq_{\Bor(X)} \psi(b) \), then \( a \subseteq b \).
\end{enumerate}
In particular, \( (\pow(\omega), \subseteq) \) embeds into the \( \F \)-hierarchy  on \( X \) for every reducibility \( \L(d) \subseteq \F \subseteq \Bor(X) \), hence \( \Deg(\F) \) is very bad.
\end{theorem}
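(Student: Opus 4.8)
The plan is to glue together countably many ``generic'', pairwise $\Bor(X)$-incomparable pieces coming from Lemma~\ref{lem:choice}, distributed along a sequence of pairwise disjoint clopen balls, so that the Boolean behaviour of $(\pow(\omega),\subseteq)$ is faithfully reflected. The first step is the combinatorial setup of those balls. Since $X$ is an uncountable Polish space, $|X|=2^{\aleph_0}$ and $|\Bor(X)|=2^{\aleph_0}$; moreover a separable discrete metric space is countable, so $\inf R(d)=0$, $R(d)$ contains a strictly decreasing sequence converging to $0$, and $X$ has a condensation point $z^*$. Using these facts one recursively picks a strictly decreasing null sequence $(\varepsilon_n)_{n\in\omega}$ in $R(d)$ and sets $Y_n=B_d(z^*,\varepsilon_n)\setminus B_d(z^*,\varepsilon_{n+1})$, at each stage choosing $\varepsilon_{n+1}$ small enough that $Y_n$ is uncountable --- possible because $B_d(z^*,\varepsilon_n)\setminus\{z^*\}$ is uncountable and is the increasing union of the sets $B_d(z^*,\varepsilon_n)\setminus B_d(z^*,\delta)$ as $\delta\downarrow 0$. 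The resulting $Y_n$ are pairwise disjoint, nonempty, uncountable clopen subsets of $X$ with $z^*\notin\bigcup_n Y_n$, and --- the property we shall really use --- by the isosceles triangle rule, for every $J\subseteq\omega$ the map sending $\bigcup_{n\in J}Y_n$ constantly to $z^*$ and fixing everything else is nonexpansive.

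I would then apply Lemma~\ref{lem:choice} with $\mu=2^{\aleph_0}$, $\F=\Bor(X)$, and $\mathcal{C}$ the family of all uncountable Borel subsets of $X$, noting that $|\F|=|\mathcal{C}|=2^{\aleph_0}$, that each member of $\mathcal{C}$ has size $2^{\aleph_0}$, and that every $Y_n$ lies in $\mathcal{C}$. This yields $A\subseteq X^2$ with $A_x\cap C\nleq_{\Bor(X)}A_y$ for all distinct $x,y\in X$ and all $C\in\mathcal{C}$. From the construction in Lemma~\ref{lem:choice}, each $A_x$ is a nonempty proper subset of $X$; as a consequence $A_x\cap Y_n$ is in fact uncountable for every $x$ and $n$, for if it were countable then, picking any $y\neq x$ and points $a_0\in A_y$, $b_0\notin A_y$, the two-valued Borel map equal to $a_0$ on $A_x\cap Y_n$ and to $b_0$ elsewhere would witness $A_x\cap Y_n\leq_{\Bor(X)}A_y$, against the conclusion of the lemma (as $Y_n\in\mathcal{C}$). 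Now fix distinct points $x_n\in Y_n$, put $W_n=A_{x_n}\cap Y_n$ (an uncountable subset of $Y_n$), and set $\psi(a)=\bigcup_{n\in a}W_n$; note $z^*\notin\psi(a)$, and $\psi(a)\cap Y_n$ equals $W_n$ if $n\in a$ and $\emptyset$ otherwise.

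Part (1) is then straightforward: given $a\subseteq b$, the map $f$ that sends $\bigcup_{n\in b\setminus a}Y_n$ constantly to $z^*$ and fixes the rest is nonexpansive by the property above, and one checks $f^{-1}(\psi(b))=\psi(a)$ by inspecting each $Y_n$ (using $\psi(b)\cap Y_n=W_n$ when $n\in a$, and $z^*\notin\psi(b)$) and the complement of $\bigcup_n Y_n$. For part (2), suppose $n\in a\setminus b$ and, for contradiction, that a Borel $f\colon X\to X$ satisfies $\psi(a)=f^{-1}(\psi(b))$. Partition $Y_n$ into the Borel sets $Z_m=Y_n\cap f^{-1}(Y_m)$ $(m\in\omega)$ and $Z_\infty=Y_n\setminus f^{-1}\bigl(\bigcup_m Y_m\bigr)$. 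Since $\psi(b)\subseteq\bigcup_m Y_m$, and $\psi(b)\cap Y_m$ equals $W_m=A_{x_m}\cap Y_m$ for $m\in b$ and $\emptyset$ otherwise, restricting $f$ to the pieces gives $W_n\cap Z_m=\emptyset$ for $m\notin b$ and for $Z_\infty$, while $A_{x_n}\cap Z_m=(f\restriction Z_m)^{-1}(A_{x_m})$ for $m\in b$. As $W_n$ is uncountable and $W_n=\bigcup_{m\in b}(A_{x_n}\cap Z_m)$, some $Z_{m_0}$ with $m_0\in b$ is uncountable, hence $Z_{m_0}\in\mathcal{C}$; extending $f\restriction Z_{m_0}$ to a Borel $\hat{g}\colon X\to X$ sending $X\setminus Z_{m_0}$ to a fixed point of $X\setminus A_{x_{m_0}}$ gives $A_{x_n}\cap Z_{m_0}=\hat{g}^{-1}(A_{x_{m_0}})$, i.e.\ $A_{x_n}\cap Z_{m_0}\leq_{\Bor(X)}A_{x_{m_0}}$ with $x_n\neq x_{m_0}$ and $Z_{m_0}\in\mathcal{C}$ --- contradicting Lemma~\ref{lem:choice}. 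The final assertion follows: for any reducibility $\F$ with $\L(d)\subseteq\F\subseteq\Bor(X)$, parts (1) and (2) give $a\subseteq b\iff\psi(a)\leq_\F\psi(b)$, so $a\mapsto[\psi(a)]_\F$ embeds $(\pow(\omega),\subseteq)$ into $\Deg(\F)$, and this partial order contains both infinite antichains (the singletons) and infinite strictly descending chains (the cofinite sets), whence $\Deg(\F)$ is very bad.

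The crux is part (2): the delicate points are ensuring every $W_n$ is uncountable --- which is exactly why the balls $Y_n$ must be thrown into $\mathcal{C}$ --- and the pigeonhole-and-extension step, which manufactures from a hypothetical Borel reduction a genuine witness of $A_{x_n}\cap C\leq_{\Bor(X)}A_{x_{m_0}}$ of precisely the shape forbidden by Lemma~\ref{lem:choice}. A secondary, essentially bookkeeping, difficulty is carrying out the construction of the disjoint uncountable clopen annuli $Y_n$, together with the nonexpansive collapse maps, inside an entirely arbitrary uncountable ultrametric Polish space.
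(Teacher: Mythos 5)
Your proof is correct, and it is a genuine variant of the one in the paper, built on the same Lemma~\ref{lem:choice} with the same parameters $\mu=2^{\aleph_0}$, $\mathcal{C}=$ uncountable Borel sets, $\F=\Bor(X)$. The key structural divergence is in how the incomparable pieces are distributed inside $X$. The paper picks pairwise disjoint uncountable clopen \emph{balls} $X_n$, transports each $A_n$ into $X_n$ via a Borel isomorphism $h_n\colon X\to X_n$, sets $\psi(a)=\bigcup_{n\in a}h_n(A_n)$, and for part~(1) collapses each discarded ball $X_n$ to a point $y_n\in X_n$ --- nonexpansive because within a clopen ball of an ultrametric the distance to any outside point is constant. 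You instead build concentric uncountable clopen \emph{annuli} $Y_n$ around a condensation point $z^*$, intersect $A_{x_n}$ directly with $Y_n$ (no Borel isomorphism), and for part~(1) collapse all discarded annuli to the single center $z^*$, which is nonexpansive by the isosceles rule. Your route trades the paper's Borel-isomorphism step for an extra lemma ($A_{x_n}\cap Y_n$ is uncountable, proved by a short two-valued-map contradiction), and it supplies a detailed justification of the existence of the disjoint uncountable clopen pieces --- something the paper asserts without argument. Part~(2) is essentially the same pigeonhole-and-extend step in both proofs, with the uncountable $Z_{m_0}=Y_n\cap f^{-1}(Y_{m_0})$ playing the role of the paper's $C=h_n^{-1}(f^{-1}(X_m)\cap X_n)$; both crucially use the refined conclusion $A_x\cap C\nleq_{\Bor(X)}A_y$ rather than mere incomparability of the $A_n$. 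One tiny slip in phrasing: ``the cofinite sets'' is not itself a descending chain under $\subseteq$; you of course mean a chain such as $\omega\supsetneq\omega\setminus\{0\}\supsetneq\omega\setminus\{0,1\}\supsetneq\cdots$, but the intent is clear.
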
 

\begin{proof} 
We apply the Lemma~\ref{lem:choice} letting \( \mu = |X| = 2^{\aleph_0} \), \( \mathcal{C} \) be the set of all \emph{uncountable} Borel subsets of $X$, and \( \F  = \Bor(X) \) be the collection of all Borel functions from \( X \) to itself. Thus we obtain a sequence of $\leq_{\Bor(X)}$-incomparable sets $A_n\subseteq X$ (the lemma gives more, but an $\omega$-sequence is sufficient here). Notice that each \( A_n \) is necessarily uncountable and that \( A_n \neq X \), as otherwise in both cases we would easily have \( A_n \leq_{\Bor(X)} A_m \) for every \( m \in \omega \). Now choose a sequence $(X_n)_{n\in\omega}$ of pairwise disjoint uncountable clopen balls in $X$, and fix a Borel isomorphism $h_n\colon X \to X_n$ for every \( n \in \omega \). Given \( a \subseteq \omega \), set $\psi(a)=\bigcup_{n \in a} h_n(A_n)$.

To see that $\psi$ is as required, first suppose that \( a,b \subseteq \omega \) are such that $a \subseteq b$, and for every $n\in b \setminus a$ pick a point \( y_n \in X_n \setminus h_n(A_n) \) (which exists because \( A_n\neq X \)). Then we define \( f \colon X \to X \) by setting
\[ 
f(x) = 
\begin{cases}
y_n & \text{if }x \in X_n \text{ for some } n \in b \setminus a, \\
x & \text{otherwise}.
\end{cases}%
 \] 
Clearly \( f \) reduces \( \psi(a) \) to \( \psi(b) \), and it is easy to check that since \(d \) is an ultrametric and the \( X_n \) are (cl)open balls, then \( f \in \L(d) \): therefore $\psi(a)\leq_{\L(d)} \psi(b)$, as required.

Now let \(a,b \subseteq \omega \) be such that $\psi(a) \leq_{\Bor(X)} \psi(b)$, let \( f \in \Bor(X) \) be a witness of this, and fix an arbitrary $n\in a$. Notice that \( f( \psi(a)) \subseteq \psi(b) \subseteq \bigcup_{m \in b} X_m \). Since \( A_n \) is uncountable, this means that there is $m \in b$ such that $f^{-1}(X_m)\cap X_n$ is uncountable. Fix \( \bar{y} \in X \setminus A_m \): setting $C=h_n^{-1}( f^{-1}(X_m)\cap X_n)$, we get that \( C \) is an uncountable Borel set, and that the map \( g \colon X \to X \) defined by
\[ 
g(x) = 
\begin{cases}
(h^{-1}_m \circ f \circ h_n) (x) & \text{if } x \in C, \\
\bar{y} & \text{otherwise}
\end{cases}%
\]
witnesses \( A_n \cap C \leq_{\Bor(X)} A_m \). By our choice of the \( A_n \)'s, this implies \( n = m \), whence \( n \in b \). Therefore \( a \subseteq b \), as required.
\end{proof} 


\begin{remark}
Notice that to get Lemma~\ref{lem:choice} it is enough to assume that \( X \) is a well-orderable set. Therefore, also in Theorem~\ref{th:illfounded hierarchy} we can weaken the assumption \( \AC \) by just requiring that \( X \) (equivalently, any uncountable Polish space) is well-orderable.
\end{remark}

Using essentially the same argument, one can also show that a variant of Theorem~\ref{th:illfounded hierarchy} applies to arbitrary uncountable Polish spaces \( X \) (and not only to the ultrametric ones). 

\begin{theorem}[\( \AC \)]\label{corollary: illfounded Borel hierarchy}
Let \( X \) be an uncountable Polish space. Then there is a map \( \psi \colon \pow(\omega) \to \pow(X) \) such that for every \( a,b \subseteq \omega \)
\begin{enumerate}
\item
if \( a \subseteq b \), then \( \psi(a) \leq_{\mathsf{D}_2(X)} \psi(b) \);
\item
if \( \psi(a) \leq_{\Bor(X)} \psi(b) \), then \( a \subseteq b \).
\end{enumerate}
In particular, \( (\pow(\omega), \subseteq) \) embeds into the \( \F \)-hierarchy on \( X \) for every reducibility \( \mathsf{D}_2(X) \subseteq \F \subseteq \Bor(X) \), hence \( \Deg(\F) \) is very bad.
\end{theorem}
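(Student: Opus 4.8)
The plan is to replay the proof of Theorem~\ref{th:illfounded hierarchy} almost verbatim, the only structural change being that the pairwise disjoint uncountable clopen balls used there (available in the ultrametric case because the space is zero-dimensional) must be replaced by pairwise disjoint uncountable \emph{open} subsets of $X$. This costs nothing essential: in any metrizable space every open set and every closed set is $\boldsymbol{\Delta}^0_2$, the class $\boldsymbol{\Sigma}^0_2$ is closed under countable unions and under intersections with closed sets, and an uncountable open subset of a Polish space is again an uncountable Polish space, hence Borel isomorphic to $X$. The only price is that the reductions witnessing part~(1) are no longer nonexpansive but merely $\boldsymbol{\Delta}^0_2$-measurable, which is why $\mathsf{D}_2(X)$ replaces $\L(d)$ in the statement.

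First I would produce the family of open sets. Since $X$ is uncountable, its Cantor--Bendixson perfect kernel $P$ is a nonempty closed perfect (hence uncountable) Polish subspace; fixing $p\in P$ and a sequence $(p_n)_{n\in\omega}$ of pairwise distinct points of $P\setminus\{p\}$ converging to $p$ (which exists as $p$ is not isolated in $P$), the set $\{p_n\mid n\in\omega\}$ is a discrete subspace of $X$, so one can choose radii $\rho_n\in\RR^+$ with the open balls $U_n=B_d(p_n,\rho_n)$ pairwise disjoint; each $U_n$ is uncountable because $U_n\cap P$ is a nonempty open subset of the perfect Polish space $P$. Fix Borel isomorphisms $h_n\colon X\to U_n$. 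Then apply Lemma~\ref{lem:choice} with $\mu=|X|=2^{\aleph_0}$, with $\mathcal{C}$ the collection of all uncountable Borel subsets of $X$, and with $\F=\Bor(X)$, and restrict the resulting family to an $\omega$-subsequence, obtaining sets $A_n\subsetneq X$ ($n\in\omega$) with $A_n\cap C\nleq_{\Bor(X)}A_m$ for all distinct $n,m$ and every uncountable Borel $C$ (as in Theorem~\ref{th:illfounded hierarchy} each $A_n$ is necessarily uncountable and $\neq X$). Set $\psi(a)=\bigcup_{n\in a}h_n(A_n)$, so that $\psi(c)\cap U_n=h_n(A_n)$ for $n\in c$ and $\psi(c)\cap U_n=\emptyset$ for $n\notin c$.

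For part~(1), given $a\subseteq b$ pick $y_n\in U_n\setminus h_n(A_n)$ for each $n\in b\setminus a$ (nonempty as $A_n\neq X$) and let $f\colon X\to X$ be the identity on the closed set $Y=X\setminus\bigcup_{n\in b\setminus a}U_n$ and the constant $y_n$ on $U_n$ for $n\in b\setminus a$. As in Theorem~\ref{th:illfounded hierarchy} one checks $f^{-1}(\psi(b))=\psi(a)$ using the displayed identity above and disjointness of the $U_n$; and for $A\in\boldsymbol{\Sigma}^0_2$ one has $f^{-1}(A)=(A\cap Y)\cup\bigcup\{U_n\mid n\in b\setminus a,\ y_n\in A\}$, which lies in $\boldsymbol{\Sigma}^0_2$ because $A\cap Y$ is the intersection of a $\boldsymbol{\Sigma}^0_2$ set with the closed set $Y$ and the remaining summand is open; hence $f\in\mathsf{D}_2(X)$ and $\psi(a)\leq_{\mathsf{D}_2(X)}\psi(b)$. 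For part~(2), if $f\in\Bor(X)$ witnesses $\psi(a)\leq_{\Bor(X)}\psi(b)$ and $n\in a$, then $f$ maps the uncountable set $h_n(A_n)\subseteq U_n$ into $\psi(b)\subseteq\bigcup_{m\in b}U_m$, so $f^{-1}(U_m)\cap U_n$ is uncountable for some $m\in b$, and $C:=h_n^{-1}(f^{-1}(U_m)\cap U_n)$ is an uncountable Borel set; the map equal to $h_m^{-1}\circ f\circ h_n$ on $C$ and to a fixed $\bar y\in X\setminus A_m$ off $C$ is then a Borel reduction of $A_n\cap C$ to $A_m$, so $n=m$ by the choice of the $A_n$, whence $n\in b$ and $a\subseteq b$.

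Finally, for any reducibility $\F$ with $\mathsf{D}_2(X)\subseteq\F\subseteq\Bor(X)$, combining~(1) and~(2) yields $\psi(a)\leq_\F\psi(b)\iff a\subseteq b$, so $a\mapsto[\psi(a)]_\F$ is an order-embedding of $(\pow(\omega),\subseteq)$ into $\Deg(\F)$; since $(\pow(\omega),\subseteq)$ has an infinite antichain (the singletons) and an infinite strictly descending chain, $\Deg(\F)$ is very bad. The two points I expect to require care are: locating the pairwise disjoint uncountable open sets $U_n$ inside an \emph{arbitrary} uncountable Polish space (handled via the perfect kernel, as above); and verifying that $f$ from part~(1) is a genuine $\boldsymbol{\Delta}^0_2$-\emph{function}, i.e.\ that $f^{-1}(\boldsymbol{\Sigma}^0_2)\subseteq\boldsymbol{\Sigma}^0_2$ rather than just $f^{-1}(\boldsymbol{\Sigma}^0_1)\subseteq\boldsymbol{\Delta}^0_2$ --- this is exactly where one uses that $Y$ is \emph{closed} (not merely $\boldsymbol{\Delta}^0_2$), so that $A\cap Y$ remains $\boldsymbol{\Sigma}^0_2$ for every $A\in\boldsymbol{\Sigma}^0_2$.
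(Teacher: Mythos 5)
Your proof is correct and takes essentially the same route as the paper: the paper's own proof of this theorem is just the one-line instruction ``in the proof of Theorem~\ref{th:illfounded hierarchy}, let $(X_n)_{n\in\omega}$ be a partition of $X$ into uncountable $\boldsymbol{\Delta}^0_2$ sets,'' and you carry out exactly this adaptation, merely using pairwise disjoint uncountable \emph{open} sets $U_n$ (not covering $X$) instead of a $\boldsymbol{\Delta}^0_2$ \emph{partition} --- an inessential variant, since in both cases the preimage of a $\boldsymbol{\Sigma}^0_2$ set under the piecewise-defined reduction decomposes as a $\boldsymbol{\Sigma}^0_2$ piece union an open set, so the reduction is $\boldsymbol{\Delta}^0_2$-measurable. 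The only small point to tidy in your construction of the $U_n$ is that the radii $\rho_n$ must be chosen (e.g.\ after passing to a subsequence of $(p_n)$ with $d(p_{n+1},p)\le\tfrac12 d(p_n,p)$) so that the balls are actually pairwise disjoint; once that is done, everything else is exactly as you say.
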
 

\begin{proof} 
In the proof of Theorem \ref{th:illfounded hierarchy}, let $(X_n)_{n\in\omega}$ be a partition of $X$ into uncountable ${\bf \Delta}^0_2$ sets. 
\end{proof} 

\begin{remark}
In Theorem~\ref{corollary: illfounded Borel hierarchy} we cannot replace \( \leq_{\D_2(X)} \) with continuous reducibility \( \leq_{\W(X)} \): in fact, in the Cook continuum \( X \) (which is uncountable), we cannot hope to embed \( (\pow(\omega), \subseteq) \) into \( \Deg(\W(X)) \) because there are no infinite chains of subsets of \( X \) (with respect to continuous reducibility).
\end{remark}

We now aim to show that if we further assume \( \mathsf{V=L} \), then the map $\psi$ of Theorems \ref{th:illfounded hierarchy} and \ref{corollary: illfounded Borel hierarchy} can be chosen to range in the collection of ${\bf \Pi}^1_1$ (alternatively: \( \boldsymbol{\Sigma}^1_1 \)) subsets of the given (ultrametric) Polish space: this in particular implies that the $\mathsf{L}(\bar{d})$-hierarchy on ${\bf \Pi}^1_1$ (respectively, \(\boldsymbol{\Sigma}^1_1 \)) subsets of $\pre{\omega}{\omega}$ is very bad in $\mathsf{L}$. 
To prove this, we will modify the recursion used in the proof of Lemma~\ref{lem:choice} so that membership in each of the sets can be computed in the next admissible set. 

\begin{notation}
For \( x,y \in \pre{\omega}{\omega} \), let $\omega_1^{x,y}$ denote the least $(x,y)$-admissible ordinal $\gamma$.%
\footnote{That is, \( \omega_1^{x,y} \) is the least $\gamma>\omega$ such that $L_{\gamma}[x,y]$ is a model of Kripke-Platek set theory.}
To simplify the notation, set also \( \omega_1^x = \omega_1^{x,x} \).
\end{notation}

\begin{theorem}[Spector-Gandy] (see \cite[Theorem 5.5]{Hjorth10}) \label{th:Spector-Gandy}
A set $A\subseteq{}^{\omega}\omega$ is $\Pi^1_1$ in a parameter $p\in {}^{\omega}\omega$ if and only if there is a $\Sigma_1$-formula $\varphi(x)$ such that 
$$x\in A\Leftrightarrow L_{\omega_1^{x,p}}[x,p]\vDash \varphi(x,p)$$
for all $x\in {}^{\omega}\omega$. 
\end{theorem}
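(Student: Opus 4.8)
The plan is to prove the two implications of the equivalence separately: the passage from a $\Pi^1_1$ set to a $\Sigma_1$-definition over $L_{\omega_1^{x,p}}[x,p]$ via the tree normal form, and the converse via a Gandy-style reflection argument. Throughout I fix the parameter $p$ and use freely that $\omega_1^{x,p}$ is the least $(x,p)$-admissible ordinal, so that $L_{\omega_1^{x,p}}[x,p]$ is the least admissible set containing $x$ and $p$, and the standing fact that in any admissible set the assertion ``$E$ is a well-founded relation'' is $\Delta_1$ when $E$ ranges over \emph{sets}: it has both a $\Sigma_1$ form (existence of a rank function) and a $\Pi_1$ form (absence of an infinite descending chain), and by $\Sigma_1$-recursion an admissible set computes its truth value correctly.

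For the direction from $\Pi^1_1$ to $\Sigma_1$-definability I would start from the usual normal form: there is a tree $T(x,p)$ on $\omega$, uniformly recursive in $(x,p)$, with $x\in A$ if and only if $T(x,p)$ is well-founded. A canonical code of $T(x,p)$ lies at a level of the $L[x,p]$-hierarchy well below $\omega_1^{x,p}$, so the admissibility remark above gives that $L_{\omega_1^{x,p}}[x,p]\vDash{}$``$T(x,p)$ is well-founded'' if and only if $T(x,p)$ really is, i.e.\ if and only if $x\in A$. Taking $\varphi(x,p)$ to be the $\Sigma_1$ sentence ``the tree coded arithmetically from $x$ and $p$ carries a rank function'' finishes this direction.

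For the converse, write $\varphi(v,w)\equiv\exists u\,\psi(u,v,w)$ with $\psi$ a $\Delta_0$ formula and set $A=\{x\mid L_{\omega_1^{x,p}}[x,p]\vDash\varphi(x,p)\}$. First, using $\Sigma_1$-reflection inside the admissible set $L_{\omega_1^{x,p}}[x,p]$, the $\Delta_0$-absoluteness of $\psi$, and the upward persistence of $\Sigma_1$ formulas along the $L$-hierarchy, I would check that
\[
L_{\omega_1^{x,p}}[x,p]\vDash\varphi(x,p)\iff\exists\alpha<\omega_1^{x,p}\ \bigl(L_\alpha[x,p]\vDash\varphi(x,p)\bigr).
\]
Next, the side condition $\alpha<\omega_1^{x,p}$ is readable from inside $L_\alpha[x,p]$: it holds exactly when no initial segment $L_\gamma[x,p]$ with $\gamma\le\alpha$ satisfies Kripke--Platek set theory, equivalently when the structure $L_\alpha[x,p]$ satisfies ``there is no admissible ordinal''. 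By G\"odel condensation every well-founded extensional structure satisfying ``$V=L[x,p]$'' collapses onto some $L_\alpha[x,p]$, so $x\in A$ if and only if there is a real coding a well-founded model of the (arithmetically described) theory consisting of ``$V=L[x,p]$'', ``there is a largest ordinal'', ``there is no admissible ordinal'', and $\varphi(x,p)$.

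Read at face value this last characterization only yields $A\in\Sigma^1_2$, because of the existential real quantifier in front of the $\Pi^1_1$ well-foundedness requirement, and I expect collapsing this bound down to $\Pi^1_1$ to be the main obstacle — this is precisely the Spector--Gandy content. The route I would take is the Gandy reflection phenomenon: since the witnessing stage $\alpha$ can always be taken below $\omega_1^{x,p}$, a coding real can be taken $\Delta^1_1$ in $(x,p)$ (the reals of $L_{\omega_1^{x,p}}[x,p]$ are exactly those $\Delta^1_1$ in $(x,p)$), so the search for $\alpha$ can be organized along Kleene's $\mathcal{O}$ relativized to $(x,p)$; one then exploits the upward persistence of $\Sigma_1$ truth — which lets one trade ``$\exists\alpha<\omega_1^{x,p}$'' for the complementary ``$\varphi$ holds at cofinally many stages below $\omega_1^{x,p}$'', a condition carrying no unbounded real quantifier — together with $\Sigma^1_1$-bounding on $\mathcal{O}^{x,p}$, to rewrite the whole statement in genuinely $\Pi^1_1$ form. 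Finally, since $\varphi$ is $\Sigma_1$ with $\Delta_0$ matrix $\psi$ one has $L_{\omega_1^{x,p}}[x,p]\vDash\varphi(x,p)$ or $L_{\omega_1^{x,p}}[x,p]\vDash\neg\varphi(x,p)$, so that the complement of $A$ is described by the corresponding $\Pi_1$ condition and nothing is lost in the collapse; this gives $A\in\Pi^1_1(p)$ and completes the argument.
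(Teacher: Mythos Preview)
The paper does not prove this theorem: it is stated with a reference to Hjorth's notes and used as a black box in Section~\ref{sec: choice}. So there is no ``paper's proof'' to compare against, and I comment only on the soundness of your sketch.

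Your forward direction is correct and standard: the tree normal form reduces membership in a $\Pi^1_1(p)$ set to well-foundedness of a tree recursive in $(x,p)$, and ``there exists a rank function'' is a $\Sigma_1$ condition whose truth value is computed correctly in $L_{\omega_1^{x,p}}[x,p]$ by admissibility.

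The converse has a genuine gap. You correctly isolate the difficulty --- the naive argument only yields $\Sigma^1_2$ --- but the proposed fix does not go through as written. Restricting the witnessing real to $\Delta^1_1(x,p)$ is circular: the statement that ``$\exists y\in\Delta^1_1(x,p)\,[\text{arithmetic}]$'' defines a $\Pi^1_1$ set \emph{is} the Spector--Gandy theorem in one of its standard formulations. The ``cofinally many stages'' reformulation still quantifies over codes in $\mathcal{O}^{x,p}$, and $\Sigma^1_1$-bounding bounds $\Sigma^1_1$ subsets of $\mathcal{O}^{x,p}$, not the $\Pi^1_1$ set $\{e\in\mathcal{O}^{x,p}:L_{|e|}[x,p]\not\models\varphi\}$ you would need to bound; so no $\Pi^1_1$ description falls out. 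The standard route is different and analyzes the \emph{complement}: show that $x\notin A$ is equivalent to the existence of an $\omega$-model $M$ of $\mathrm{KP}+``V{=}L[x,p]"$ containing $x,p$ as standard elements with $M\models\neg\varphi(x,p)$. One direction is witnessed by $L_{\omega_1^{x,p}}[x,p]$ itself; for the other, Ville's truncation lemma gives that the well-founded part of any such $M$ has admissible height $\geq\omega_1^{x,p}$, and downward persistence of the $\Pi_1$ formula $\neg\varphi$ along end-extensions transfers $\neg\varphi$ to $L_{\omega_1^{x,p}}[x,p]$. Since ``there exists a real coding an $\omega$-model of this recursive-in-$(x,p)$ theory'' is $\Sigma^1_1$ in $(x,p)$, the complement of $A$ is $\Sigma^1_1(p)$, hence $A\in\Pi^1_1(p)$.
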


\begin{lemma}\label{lemma:codes} 
Let $X$ be a Polish space. Then there is a set $G\subseteq {}^{\omega}\omega\times X^2$ such that: 
\begin{enumerate} 
\item A set $F\subseteq X^2$ is the graph of a Borel function from $X$ to itself if and only if $F=G_x=\{(y,z)\in X^2\mid (x,y,z)\in G\}$ for some $x\in \p(G)$. 
\item The projection $\p(G)$ on the first coordinate is a ${\bf \Pi}^1_1$ set. 
\item $G$ is both ${\bf \Pi}^1_1$ and ${\bf \Sigma}^1_1$ on $\p(G)\times X^2$. 
\end{enumerate} 
\end{lemma}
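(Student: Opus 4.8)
The plan is to build the universal set $G$ by first fixing a good universal system of Borel codes and then verifying the complexity claims via the Spector--Gandy theorem. First I would fix a recursive homeomorphism (or at least a recursive Borel isomorphism) between $X$ and a recursively presented Polish space, so that without loss of generality I may assume $X = \pre{\omega}{\omega}$; this is legitimate since every Polish space is Borel-isomorphic to $\pre{\omega}{\omega}$ (and for an ultrametric Polish space $X$ Lemma~\ref{lemma:unifcontandlip} even gives a closed copy of $X$ inside $\pre{\omega}{\omega}$), and the notions ``graph of a Borel function'' and the projective classes $\boldsymbol{\Pi}^1_1$, $\boldsymbol{\Sigma}^1_1$ are invariant under Borel isomorphisms. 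Then I would take the standard $\boldsymbol{\Pi}^1_1$ set $\mathrm{BC} \subseteq \pre{\omega}{\omega}$ of codes for Borel sets together with the universal relations $S_{\boldsymbol{\Sigma}}, S_{\boldsymbol{\Pi}} \subseteq \pre{\omega}{\omega} \times \pre{\omega}{\omega}$ decoding ``$x \in$ the Borel set coded by $c$'' which are $\boldsymbol{\Sigma}^1_1$ and $\boldsymbol{\Pi}^1_1$ respectively and which agree on $\mathrm{BC} \times \pre{\omega}{\omega}$ (this is the classical Moschovakis/Kechris machinery; see \cite[Section 7]{Kechris:1995}).

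Next I would encode a Borel function $f \colon X \to X$ by a code for its graph $\mathrm{graph}(f) \subseteq X \times X$ regarded as a Borel subset of the product, but with a side condition forcing single-valuedness and totality. Concretely, set
\[
\p(G) = \set{x \in \mathrm{BC}}{\forall u \in X \ \exists! v \in X \ \big( (u,v) \in \text{Borel set coded by } x \big)},
\]
and let $G = \set{(x,u,v)}{x \in \p(G) \text{ and } (u,v) \in \text{Borel set coded by } x}$. Item (1) is then immediate: if $x \in \p(G)$ the section $G_x$ is by construction the graph of a total single-valued Borel function, and conversely any Borel $f$ has a Borel graph, hence a code $x \in \mathrm{BC}$, and that $x$ lies in $\p(G)$ because the single-valuedness/totality condition holds. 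For item (3), on the set $\p(G) \times X^2$ (equivalently on $\mathrm{BC} \times X^2$, since $\p(G) \subseteq \mathrm{BC}$) the relation ``$(u,v) \in$ Borel set coded by $x$'' is simultaneously $\boldsymbol{\Sigma}^1_1$ and $\boldsymbol{\Pi}^1_1$ by the agreement of $S_{\boldsymbol{\Sigma}}$ and $S_{\boldsymbol{\Pi}}$ on codes; intersecting with the ambient $\boldsymbol{\Pi}^1_1$ condition $x \in \p(G)$ preserves this (a $\boldsymbol{\Sigma}^1_1$ set relativized to a $\boldsymbol{\Pi}^1_1$ ambient set is still $\boldsymbol{\Pi}^1_1$-and-$\boldsymbol{\Sigma}^1_1$ as a subset of that ambient set, which is exactly what ``$\boldsymbol{\Delta}^1_1$ on $\p(G) \times X^2$'' means in the sense of the statement).

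The main obstacle is item (2): showing $\p(G)$ is genuinely $\boldsymbol{\Pi}^1_1$ rather than merely $\boldsymbol{\Pi}^1_2$. Naively the condition defining $\p(G)$ reads ``$x \in \mathrm{BC}$ ($\boldsymbol{\Pi}^1_1$) and for all $u$ there exists a unique $v$ with $(u,v) \in B_x$'', and the inner ``$\forall u \exists v$'' over the $\boldsymbol{\Pi}^1_1$ relation $S_{\boldsymbol{\Pi}}$ is prima facie $\boldsymbol{\Pi}^1_2$. The fix is the Spector--Gandy theorem (Theorem~\ref{th:Spector-Gandy}) together with $\boldsymbol{\Pi}^1_1$-uniformization / the Kond\^o--Addison-style boundedness: the point is that for $x \in \mathrm{BC}$ the Borel set $B_x$ has a code whose rank is below $\omega_1^x$, so quantifying over witnesses $v$ can be replaced by quantifying over $v$ in $L_{\omega_1^{x,u}}[x,u]$, which collapses the existential real quantifier to a quantifier over a countable admissible set and hence does not increase the projective complexity beyond $\boldsymbol{\Pi}^1_1$. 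I would phrase this by choosing the universal code set so that the decoding relation is $\Sigma_1$ over $L_{\omega_1^{x,u}}[x,u]$ in the sense of Theorem~\ref{th:Spector-Gandy}, rewrite ``$\exists! v\, (u,v)\in B_x$'' as a statement of the form $L_{\omega_1^{x,u}}[x,u] \models \psi(x,u)$ for a suitable $\Sigma_1$ formula $\psi$ (using that ``unique'' and ``exists in the admissible set'' are absolute here), and then quote Spector--Gandy in the other direction to conclude that ``$\forall u\, [\,L_{\omega_1^{x,u}}[x,u]\models \psi(x,u)\,]$'' is $\boldsymbol{\Pi}^1_1$ in $x$; intersecting with $\mathrm{BC}$ keeps it $\boldsymbol{\Pi}^1_1$. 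This is the standard proof that ``being a Borel code for the graph of a total function'' is $\boldsymbol{\Pi}^1_1$, and I expect the only real care needed is in arranging the universal code system and the admissibility bookkeeping so that Theorem~\ref{th:Spector-Gandy} applies verbatim.
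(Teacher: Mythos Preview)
Your approach is correct but genuinely different from the paper's.  The paper codes a Borel function \emph{directly}, not via its graph: it fixes a countable dense set \(D\subseteq X\), codes an arbitrary function on \(D\) by a real, and then builds up an arbitrary Borel function as an iterated pseudo-limit (pointwise limit where it exists, a default value elsewhere) of such functions along a well-founded tree.  The code is thus a well-founded tree with reals attached to the leaves, and item~(2) is immediate because ``the tree is well-founded'' is already \(\boldsymbol{\Pi}^1_1\); no Spector--Gandy gymnastics are needed.  Item~(3) then comes from an easy transfinite induction along the tree.

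Your route---code the graph as a Borel set and impose the function condition---uses only off-the-shelf Borel-code machinery, at the price of having to work for item~(2): the naive totality clause \(\forall u\,\exists v\,(u,v)\in B_x\) is prima facie \(\boldsymbol{\Pi}^1_2\), and you need the extra observation that, under the uniqueness clause (which is \(\boldsymbol{\Pi}^1_1\) outright via the \(\boldsymbol{\Sigma}^1_1\) decoding), the unique witness \(v\) is \(\Delta^1_1(x,u)\) and hence lies in \(L_{\omega_1^{x,u}}[x,u]\), so the existential quantifier can be bounded and Spector--Gandy applies.  One small wording issue: you phrase this as rewriting ``\(\exists!\,v\)'' as a single \(\Sigma_1\) formula over the admissible set, but uniqueness is not \(\Sigma_1\) there (two bad witnesses need not lie in \(L_{\omega_1^{x,u}}[x,u]\)).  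The clean formulation is to handle uniqueness as a separate \(\boldsymbol{\Pi}^1_1\) conjunct and then, \emph{assuming} uniqueness, bound the existence witness; the conjunction of the two \(\boldsymbol{\Pi}^1_1\) conditions is what you want.  With that adjustment your argument goes through; the paper's approach simply sidesteps this step by never having an unbounded existential real quantifier in the description of the code set.
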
 

\begin{proof}[Sketch of proof] 
Notice that we can concentrate only on ultrametric Polish spaces \( X = (X,d) \), because the result can then be transferred to an arbitrary Polish space \( Y \) by using a Borel isomorphism between \( X \) and \( Y \). Therefore from now on we fix
 an ultrametric
 Polish space \( X = (X,d) \). Recall that \( \Bor(X) \) coincides with the collection of all Baire class $\alpha$ functions (for arbitrary $\alpha<\omega_1$), i.e.\ with the closure under pointwise limits of the collection of all Lipschitz functions (see~\cite[Corollary 2.16]{MottoRos:2009b} and~\cite[Theorems 24.3]{Kechris:1995}). Starting with a function $f$ defined on a fixed countable dense set $D\subseteq X$, we form the (pseudo-)limit $\bar{f}$ of \( f \) by setting $\bar{f}(x)=\lim_{n\rightarrow \infty} f(x_n)$ (for an arbitrary sequence $(x_n)_{n\in\omega}$ in $D$ with $\lim_{n\rightarrow \infty} x_n=x$) if 
\[
\mathrm{osc}_f(x)=\lim_{n\rightarrow \infty} \sup \{ d(f(y), f(z)) \mid y,z \in X \wedge d(x,y), d(x,z)<2^{-n}  \}=0,
\]
and \( \bar{f}(x) = y_0 \) (for \( y_0 \in X \) a fixed value) otherwise. From a countable family of functions \( f \) as above attached to the terminal nodes of a given well-founded tree, we can then build up a Borel function \( g \) by forming (pseudo-)limits (i.e.\ taking the pointwise limit where it exists and some fixed value \( y_0 \in X \) elsewhere) in the obvious way along the tree. The tree is then coded into an element of $x \in {}^{\omega}\omega$, and for all \( x \)'s built in this way we let \( G_x \) be the graph of the corresponding Borel function \( g \). 
Notice that the set of codes is ${\bf \Pi}^1_1$ because of the condition that the trees used in the coding are well-founded. 
\end{proof} 

\begin{theorem}\label{th:illfounded hierarchy in L}
Assume $\mathsf{V=L}$ and let $X = (X,d)$ be an uncountable ultrametric Polish space. Then there is a 
map \( \psi \) from \( \pow(\omega) \) into the \( {\bf\Pi}^1_1 \) subsets of \( X \) such that for all \( a,b \subseteq \omega \)
\begin{enumerate}
\item
if \( a \subseteq b \), then \( \psi(a) \leq_{\L(d)} \psi(b) \);
\item
if \( \psi(a) \leq_{\Bor(X)} \psi(b) \), then \( a \subseteq b \).
\end{enumerate} 
In particular, \( (\pow(\omega), \subseteq) \) embeds into the \( \F \)-hierarchy on the ${\bf\Pi}^1_1$ subsets of $X$ for every reducibility \( \L(d) \subseteq \F \subseteq \Bor(X) \), hence \( \Deg_{\boldsymbol{\Pi}^1_1}(\F ) \) is very bad.
\end{theorem}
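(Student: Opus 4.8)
## Proof Strategy

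The plan is to adapt the recursion from Lemma~\ref{lem:choice}, but carried out inside \( L \), so that membership in each set \( A_{x,\alpha} \) can be decided in the least admissible set containing the relevant reals, which by the Spector-Gandy Theorem~\ref{th:Spector-Gandy} makes the final sets \( \boldsymbol{\Pi}^1_1 \). First I would reduce to the Baire space: by Lemma~\ref{lemma:unifcontandlip} there is a closed \( C \subseteq \pre{\omega}{\omega} \) and a bijection \( f \colon (C,\bar d) \to (X,d) \) with \( f \) uniformly continuous and \( f^{-1} \) nonexpansive, and a Borel isomorphism between \( X \) and \( \pre{\omega}{\omega} \); combined with Lemma~\ref{lemma:codes}, which provides a \( \boldsymbol{\Pi}^1_1 \) coding set \( G \) for Borel functions that is \( \boldsymbol{\Delta}^1_1 \) on its domain \( \p(G) \), it suffices to run the construction on \( \pre{\omega}{\omega} \) and transfer. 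So fix \( G \) as in Lemma~\ref{lemma:codes}.

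The heart of the argument is a recursion of length \( \omega_1 = \omega_1^L \). Working in \( L \), fix the canonical \( \Sigma_1 \)-good wellordering of the reals, and let \( (c_\alpha, x_\alpha, y_\alpha, C_\alpha)_{\alpha < \omega_1} \) enumerate all quadruples where \( c_\alpha \) is a code (an element of \( \p(G) \)), \( x_\alpha, y_\alpha \in \pre{\omega}{\omega} \), and \( C_\alpha \) is (a code for) an uncountable Borel subset of \( \pre{\omega}{\omega} \), arranged so that the \( \alpha \)-th quadruple appears sufficiently late — specifically, inside \( L_{\omega_1^{(z)}} \) for any real \( z \) Turing-above the reals \( x_\alpha, y_\alpha, c_\alpha, C_\alpha \) occurring at stage \( \alpha \). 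At stage \( \alpha \), given the pieces \( A_{x,<\alpha}, B_{x,<\alpha} \) built so far, let \( g \) be the Borel function coded by \( c_\alpha \) and repeat the case split of Lemma~\ref{lem:choice}: if \( g \) restricted to \( C_\alpha \setminus (A_{x_\alpha,<\alpha}\cup B_{x_\alpha,<\alpha}) \) has small image, pick a collision \( a,b \) and put \( a \) into \( A_{x_\alpha} \), \( b \) into \( B_{x_\alpha} \); otherwise pick \( a \) in that set with \( g(a)\notin A_{y_\alpha,<\alpha}\cup B_{y_\alpha,<\alpha} \) and put \( a \) into \( A_{x_\alpha} \), \( g(a) \) into \( B_{y_\alpha} \). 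The crucial extra requirement — and the main obstacle — is to make all the \emph{choices} (of the collision pair, of the point \( a \)) via the canonical \( L \)-ordering, using least witnesses, so that the entire stage-\( \alpha \) operation is \( \Sigma_1 \)-definable over \( L_{\omega_1^{(z)}} \) uniformly, where \( z \) is any real computing the finitely/countably many reals relevant at stage \( \alpha \). Since each \( A_x \), \( B_x \) has size \( \leq \aleph_0 \) after all stages (only one new point per stage goes into each, and for a given \( x \) only \( \aleph_0 \)-many stages concern it — wait, actually \( \omega_1 \)-many may, so instead we note \( |A_x|, |B_x| \leq \aleph_1 \) and the bookkeeping only needs that \( A_{x,<\alpha}\cup B_{x,<\alpha} \) be \emph{countable}, which holds because for fixed \( x \) the set of \( \beta<\alpha \) with \( x_\beta = x \) or \( y_\beta = x \) is countable by the lateness arrangement of the enumeration) the sets \( C_0 \) stay uncountable and the construction goes through exactly as in Lemma~\ref{lem:choice}.

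Then I would verify the absoluteness/complexity claim: for \( w \in \pre{\omega}{\omega} \), one has \( w \in A_x \) iff there is a stage \( \alpha \) at which \( w \) is enrolled into \( A_x \), and this stage, together with the whole prior construction restricted to reals recursive in \( (w,x) \) and whatever parameters appear, is witnessed inside \( L_{\omega_1^{w,x}}[w,x] \) by a \( \Sigma_1 \)-formula (the construction is monotone and local: enrolling \( w \) at stage \( \alpha \) depends only on the countably many reals seen by stage \( \alpha \) that are "connected" to \( w \) through the recursion, all of which are in \( L_{\omega_1^{w,x}} \)). By Theorem~\ref{th:Spector-Gandy}, each \( A_x \) is \( \boldsymbol{\Pi}^1_1 \). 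Finally I would set \( X_n \) to be a sequence of pairwise disjoint uncountable clopen balls in \( X \) as in Theorem~\ref{th:illfounded hierarchy}, with \( h_n \) the Borel isomorphism \( X \to X_n \), and define \( \psi(a) = \bigcup_{n\in a} h_n(f(A_n \cap C)) \) (pushing the \( \boldsymbol{\Pi}^1_1 \) sets \( A_n \subseteq \pre{\omega}{\omega} \) through \( f \) and \( h_n \); a countable union of \( \boldsymbol{\Pi}^1_1 \) sets living in pairwise disjoint clopen pieces is \( \boldsymbol{\Pi}^1_1 \)). Properties (1) and (2) follow verbatim as in Theorem~\ref{th:illfounded hierarchy}: for \( a \subseteq b \) the "patching" map sending \( X_n \) to a point outside \( h_n(A_n) \) for \( n \in b\setminus a \) is nonexpansive since the \( X_n \) are clopen balls, giving \( \psi(a) \leq_{\L(d)} \psi(b) \); and a Borel reduction of \( \psi(a) \) to \( \psi(b) \) yields, for each \( n \in a \), an \( m \in b \) with \( f^{-1}(X_m)\cap X_n \) uncountable, hence by the construction \( n = m \) and \( a \subseteq b \). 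The remaining routine point is that the coding set \( \mathcal{C} \) of uncountable Borel sets and the enumeration are themselves available in \( L \) with the needed definability, which is where \( \mathsf{V=L} \) (as opposed to mere well-orderability) is essential.
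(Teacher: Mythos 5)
Your high-level architecture matches the paper's: run a transfinite recursion of length $\omega_1$ in $L$, making all choices by the canonical wellordering; invoke Spector-Gandy (Theorem~\ref{th:Spector-Gandy}) to conclude that the resulting sections are $\boldsymbol{\Pi}^1_1$; and transfer from $\pre{\omega}{\omega}$ to a general uncountable ultrametric Polish space via homeomorphic copies of $\pre{\omega}{\omega}$ sitting inside disjoint clopen balls. That skeleton is correct, as is the observation that $\mathsf{V=L}$ (rather than mere well-orderability) is needed for the definability.

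The critical gap is in the Spector-Gandy verification. You write that "enrolling $w$ at stage $\alpha$ depends only on the countably many reals seen by stage $\alpha$ that are connected to $w$ through the recursion, all of which are in $L_{\omega_1^{w,x}}$" --- and you present this as a consequence of the recursion being "monotone and local" and the choices being $<_L$-least. This is not automatic. If at stage $\gamma$ you pick the $<_L$-least real $\bar{x}$ satisfying only the combinatorial requirements (lying in the right ball, avoiding $A_{l,<\gamma}\cup B_{l,<\gamma}$, and producing a collision or a point with image outside $A_{y,<\gamma}\cup B_{y,<\gamma}$), there is no reason why the prior construction $s_\gamma = (A_{n,\beta},B_{n,\beta})_{n<\omega,\beta<\gamma}$ should lie in $L_{\omega_1^{\bar{x}}}[\bar{x}]$; typically $\omega_1^{\bar{x}}$ will be far smaller than $\gamma$. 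Then $L_{\omega_1^{\bar{x},a}}[\bar{x},a]$ simply cannot "see" the stage at which $\bar{x}$ was enrolled, and the $\Sigma_1$-formula required by Spector-Gandy does not exist. The paper's fix is to make "$s_\gamma \in L_{\omega_1^{\bar x}}[\bar x]$" part of the \emph{enrollment condition itself} (Claim~\ref{claim: extension in successor step}), and to guarantee the existence of uncountably many such reals via Claim~\ref{claim: many reals with large omega1ck} (for any $s\in L_{\omega_1}$ and any basic clopen neighborhood there are uncountably many $x$ in it with $s\in L_{\omega_1^x}$). Your "lateness arrangement" constrains where the \emph{indexing quadruple} sits in $<_L$, which is a different and insufficient condition; what must be controlled is the admissible ordinal of the \emph{enrolled real}.

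Two smaller points of divergence, not gaps per se. First, the paper builds only the countably many sets $A_n,B_n\subseteq\Nbhd_{\langle n\rangle}$ for $n<\omega$, driving the recursion by $<_L$-decoding each stage as a tuple $(c,a,b,l)$ with $a,b\subseteq\omega$ and $l\in a\setminus b$; building $A_x$ for every real $x$ as in Lemma~\ref{lem:choice} and then discarding all but $A_n$ ($n\in\omega$) is wasteful and complicates the bookkeeping without gaining anything. Second, your transfer map $\psi(a)=\bigcup_{n\in a}h_n(f(A_n\cap C))$ is awkward: $f$ is only defined on the closed $C$, and the $h_n$ are arbitrary Borel isomorphisms $X\to X_n$ whose images you must then trust to be $\boldsymbol{\Pi}^1_1$. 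The paper instead fixes $\boldsymbol{\Pi}^0_2$ copies $C_l\subseteq U_l$ of $\pre{\omega}{\omega}$ inside the disjoint uncountable open balls $U_l\subseteq X$, with homeomorphisms $h_l\colon\Nbhd_{\langle l\rangle}\to C_l$, and takes $\psi(a)=h(A_a)$ for $h(x)=h_{x(0)}(x)$, which gives $\boldsymbol{\Pi}^1_1$-ness and the $\L(d)$-reduction (Claim~\ref{claim: Lipschitz reduction for ultrametric space}) cleanly and makes the anti-reduction (Claim~\ref{claim:nonBorelreduction}) a direct pullback to Claim~\ref{claim: no Borel reduction} for $\pre{\omega}{\omega}$.
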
 

\begin{proof} Let $\mathbf{N}_{t}=\{x\in {}^{\omega}\omega\mid t\subseteq x\}$ for $t\in {}^{<\omega}\omega$. 
We first assume that $(X,d)=({}^{\omega}\omega,\bar{d})$. 
The map \( \psi \colon \pow(\omega) \to \pow(\pre{\omega}{\omega}) \) is defined by first
constructing a $\Pi^1_1$ set $A\subseteq \pow(\omega)\times {}^{\omega}\omega$,\footnote{We freely identify each $a\in\mathcal{P}(\omega)$ with its characteristic function in ${}^{\omega}2$.  The notions of $\Pi^1_1$ subsets of $\mathcal{P}(\omega)$ and $\mathcal{P}(\omega)\times{}^{\omega}\omega$ are defined accordingly. Since the identification is computable, the Spector-Gandy Theorem \ref{th:Spector-Gandy} holds for $\Pi^1_1$ subsets of $\mathcal{P}(\omega)\times {}^{\omega}\omega$. } and then letting $\psi (a)= A_a=\{y\in {}^{\omega}\omega \mid (a,y)\in A\}$. 
To define the desired \( A \), we will in turn construct by recursion on $\alpha<\omega_1$  a sequence $(A_{n,\alpha}, B_{n,\alpha})_{n<\omega, \alpha<\omega_1}$ such that for all $n<\omega$ and $\alpha \leq \beta <\omega_1$
\begin{enumerate} 
\item $A_{n,\alpha}, B_{n,\alpha}\subseteq \mathbf{N}_{\langle n \rangle}$, 
\item $A_{n,\alpha}\cap B_{n,\alpha}=\emptyset$,  
\item $|A_{n,\alpha}|, |B_{n,\alpha}|<\omega_1$, and
\item $A_{n,\alpha} \subseteq A_{n,\beta}$ and $B_{n,\alpha} \subseteq B_{n,\beta}$.
\end{enumerate} 

Given \( a \subseteq \omega \), we then let for \( n < \omega \) and \( \gamma < \omega_1 \)
\begin{align*}
A_{a,\alpha}= & \bigcup\nolimits_{n\in a} A_{n,\alpha}, &  B_{a,\alpha}=& \bigcup\nolimits_{n\in a} B_{n,\alpha}, \\ 
A_{n,<\gamma}=&\bigcup\nolimits_{\alpha<\gamma} A_{n,\alpha}, &  B_{n,<\gamma}=&\bigcup\nolimits_{\alpha<\gamma} B_{n,\alpha}, \\
A_{a,<\gamma}=& \bigcup\nolimits_{\alpha<\gamma} A_{a,\alpha}, & B_{a,<\gamma}=&\bigcup\nolimits_{\alpha<\gamma} B_{a,\alpha},
\end{align*} 
and finally we set 
\begin{align*}
A_n=&\bigcup\nolimits_{\alpha<\omega_1} A_{n,\alpha}, & B_n= &\bigcup\nolimits_{\alpha<\omega_1} B_{n,\alpha}, \\
A_a=&\bigcup\nolimits_{n\in a} A_n, & B_a=&\bigcup\nolimits_{n\in a} B_n.
\end{align*} 
Notice that by (1), (2) and (4), we have that for all \( a \subseteq \omega \) and \( \gamma < \omega_1 \), \( A_{a,<\gamma} \cap B_{a,<\gamma} = \emptyset \) and \( A_a \cap B_a = \emptyset \). Finally,
to simplify the notation we will also write $s_{\gamma}=(A_{n,\alpha}, B_{n,\alpha})_{n<\omega, \alpha<\gamma}$ for $\gamma<\omega_1$. 

The construction is based on the following claims. 

\begin{claim} \label{claim: many reals with large omega1ck} 
For every $s\in L_{\omega_1}$ and $l<\omega$, there are uncountably many $x \in \mathbf{N}_{\langle l \rangle}$ with $s\in L_{\omega_1^x}$. 
\end{claim} 

\begin{proof} 
Let \( \alpha < \omega_1 \) and $x\in \mathbf{N}_{\langle l \rangle}$ be such that $s\in L_{\alpha}$ and $\omega_1^x>\alpha$. Then $\omega_1^{x,y}>\alpha$ for all $y\in {}^{\omega}\omega$, so $s\in L_{\omega_1^{x\oplus y}}$ for any $y\in {}^{\omega}\omega$ (where $x \oplus y$ is defined by $(x\oplus y)(2i)=x(i)$ and $(x\oplus y)(2i+1)=y(i)$). 
\end{proof} 

Let $A_{n,0}= B_{n,0}=\emptyset$. 
Let now $\gamma > 0$, and suppose that the $\gamma^{\mathrm{th}}$ element in $<_L$ is of the form $(c,a,b,l)$, where $c\in {}^{\omega}\omega$ is a code for a Borel measurable function $f\colon {}^{\omega}\omega\rightarrow{}^{\omega}\omega$ as in Lemma~\ref{lemma:codes}, $a,b\subseteq\omega$, and $l\in a\setminus b$ (if this is not the case, we simply let $A_{n,\gamma }=A_{n,<\gamma}$ and $B_{n,\gamma}=B_{n,<\gamma}$ for all $n\in\omega$). 

\begin{claim}\label{claim: extension in successor step} 
There is $x\in \mathbf{N}_{\langle l \rangle}$ such that $s_{\gamma}\in L_{\omega_1^x}[x]$ and $x\notin A_{l,<\gamma}\cup B_{l,<\gamma}$.
\end{claim} 

\begin{proof} 
Let 
$W \subseteq {}^{\omega}\omega$ denote the set of $x\in {}^{\omega}\omega$ with $s_{\gamma}\in L_{\omega_1^x}[x]$. Since $W \cap \mathbf{N}_{\langle l \rangle}$ is uncountable by Claim~\ref{claim: many reals with large omega1ck} and $A_{l,<\gamma}\cup B_{l,<\gamma}$ is countable by (3), there is $x\in (W\cap \mathbf{N}_{\langle l \rangle}) \setminus (A_{l,<\gamma}\cup B_{l,<\gamma})$, as required.
\end{proof} 

Let $\bar{x} \in {}^{\omega}\omega$ denote the $<_L$-least element satisfying Claim~\ref{claim: extension in successor step}. 
If $f(\bar{x}) \in A_{b,<\gamma}$, let $B_{l,\gamma}=B_{l,<\gamma}\cup\{ \bar{x} \}$. 
If $f(\bar{x}) \in B_{b, < \gamma}$, let $A_{l,\gamma}= A_{l,<\gamma}\cup \{ \bar{x} \}$. 
Finally, if $f(\bar{x}) \notin A_{b,<\gamma}\cup B_{b,<\gamma}$ and $f(\bar{x})(0) = m$, let $A_{l,\gamma}=A_{l,<\gamma}\cup\{ \bar{x} \}$, and if additionally $m\in b$, then let $B_{m,\gamma}=B_{m,<\gamma}\cup \{f(\bar{x})\}$. 
For all remaining $A_{n,\gamma}$'s and $B_{n,\gamma}$'s, let $A_{n,\gamma}=A_{n,<\gamma}$ and $B_{n,\gamma}=B_{n,<\gamma}$. 
Note that if $f(\bar{x}) \notin A_{b,<\gamma}\cup B_{b,<\gamma}$ and $m\in b$, then 
$f(\bar{x})\notin A_{m,<\gamma}$. 
This completes the construction of the \( A_{n,\alpha} \)'s and \( B_{n, \alpha} \)'s, and hence also of the sets \( A_a \) and \( B_a \) for \( a \subseteq \omega \). 

\begin{claim} 
If $a \subseteq b \subseteq \omega$, then $A_a\leq_{\mathsf{L}(\bar{d})} A_b$. 
\end{claim} 

\begin{proof} 
Let $f(z)=z$ for all $z\in \mathbf{N}_{\langle l \rangle}$ with $l\in a\cup (\omega\setminus b)$, while for  $z\in \mathbf{N}_{\langle l \rangle}$ with $l\in b\setminus a$, fix $z_0\notin A_b$ and let $f(z)=z_0$. Clearly \( f \in \L(\bar{d}) \). The result then follows from the fact that \( A_a \cap \mathbf{N}_{\langle l \rangle} = A_l \subseteq \mathbf{N}_{\langle l \rangle} \) for \( l \in a \) and \( A_a \cap \mathbf{N}_{\langle l \rangle} = \emptyset \) otherwise (and similarly for \( a \) replaced by \( b \)). 
\end{proof} 

\begin{claim}\label{claim: no Borel reduction} 
If $a\not\subseteq b$, then $A_a\not\leq_{\mathsf{Bor}({}^{\omega}\omega)} A_b$. 
\end{claim} 

\begin{proof} 
Suppose that $f\colon {}^{\omega}\omega\rightarrow {}^{\omega}\omega$ is a Borel measurable function with $A_a=f^{-1}(A_b)$. Fix $l\in a\setminus b$, and let \( c \) and \( \gamma \) be such that $c$ codes $f$ as in Lemma~\ref{lemma:codes} and  $(c,a,b,l)$ is the $\gamma^{\text{th}}$ element in $<_L$. 
Suppose that in step $\gamma$ of the construction, $ \bar{x} \in {}^{\omega}\omega$ is the $<_L$-least pair in Claim~\ref{claim: extension in successor step}. 
If $f(\bar{x}) \in A_{b , < \gamma}$, then $\bar{x} \in B_l\subseteq B_a$ and $f(\bar{x})\in A_b$. Then $\bar{x}\notin A_a$ because \( A_a \cap B_a = \emptyset \), and hence $A_a\neq f^{-1}(A_b)$. 
If $f(\bar{x}) \in B_{b, < \gamma} $, then $\bar{x} \in A_l\subseteq A_a$ and $f(\bar{x})\in B_b$. Then $A_a\neq f^{-1}(A_b)$ because \( A_b \cap B_b = \emptyset \). 
Finally, suppose that $f(\bar{x}) \notin A_{b, < \gamma} \cup B_{b, < \gamma}$ and $f(\bar{x})(0) = m$. Then $\bar{x} \in A_l\subseteq A_a$. If $m\notin b$, then $f(\bar{x})\notin A_b$ because \( A_b \cap \mathbf{N}_{\langle m \rangle} = \emptyset \). 
If $m\in b$, then $f(\bar{x})\in B_m \subseteq B_b$, so $f(\bar{x})\notin A_b$ by \( A_b \cap B_b = \emptyset \). So in both cases $A_a\neq f^{-1}(A_b)$. 
\end{proof} 

Note that the condition that $(c,a,b,l)$ is of the required form is $\Pi^1_1$ by Lemma~\ref{lemma:codes}. Since the truth value of $\Pi^1_1$ statements can be calculated in admissible sets by the Spector-Gandy Theorem \ref{th:Spector-Gandy}, this implies that the recursion is absolute between (and definable over) admissible sets. 

Let $A=\{ (a,y) \mid y \in A_a \} \subseteq \pow(\omega) \times \pre{\omega}{\omega}$. 

\begin{claim} 
A is $\Pi^1_1$. 
\end{claim} 

\begin{proof} 
We freely identify each \( a \subseteq \omega \) with its characteristic function, so that \( A \) can be viewed as a 
subset of \( \pre{\omega}{\omega} \times \pre{\omega}{\omega} \). It is then sufficient to show that there is a $\Sigma_1$ formula $\varphi$ such that $(x,y)\in A$ if and only if $L_{\omega_1^{x,y}}[x,y]\vDash \varphi(x,y)$ for all $x,y\in {}^{\omega}\omega$, by the Spector-Gandy Theorem \ref{th:Spector-Gandy}. 
Let $\varphi_l(y)$ state that there is some $\gamma<\omega_1$ and a sequence $s$ of length $\gamma$ constructed according to the recursion 
such that $y$ is added to $A_l$ at step $\gamma$. 
Let $\varphi(x,y) \iff \exists l \in x\ \varphi_l(y)$. 

Suppose that $(x,y)\in A$. Then $y \in A_l\subseteq A_x$ for some $l\in x$. Suppose that $y$ is added to $A_l$ at step $\gamma<\omega_1$ in the construction. Then $s_{\gamma}\in L_{\omega_1^y}[y]$ by the definition in the successor step. Then $L_{\omega_1^{x,y}}[x,y]\vDash \varphi_l(y)$, and hence $L_{\omega_1^{x,y}}[x,y]\vDash \varphi(x,y)$. 
Now suppose that $L_{\omega_1^{x,y}}[x,y]\vDash \varphi(x,y)$. Then $L_{\omega_1^{x,y}}[x,y]\vDash \varphi_l(y)$ for some $l\in x$. Since the recursion is absolute between admissible sets, this implies that $y \in A_x$ and hence $(x,y)\in A$. 
\end{proof} 

Now suppose that $(X,d)$ is an arbitrary uncountable ultrametric Polish space. Fix a sequence $(U_n)_{n\in\omega}$ of disjoint uncountable open balls. 
For each $l\in\omega$, fix a \( \boldsymbol{\Pi}^0_2 \) proper subset $C_l$ of $U_l$ homeomorphic to ${}^{\omega}\omega$, together with a homeomorphism $h_l\colon \mathbf{N}_{\langle l \rangle}\rightarrow C_l$ and a point \( x_l \in U_l \setminus C_l \). 
Let $h\colon {}^{\omega}\omega\rightarrow X \colon x \mapsto h_{x(0)}(x)$. Then $h\colon {}^{\omega}\omega\rightarrow \bigcup_{l\in\omega} C_l$ is a homeomorphism and $\bigcup_{l\in\omega} C_l$ is a Borel subset of $X$. Finally, let $\psi(a)=h(A_a)$ for $a \subseteq\omega$. 

\begin{claim}\label{claim: Lipschitz reduction for ultrametric space} 
If $a\subseteq b\subseteq\omega$, then $\psi(a)\leq_{\mathsf{L}(d)}\psi(b)$. 
\end{claim} 

\begin{proof} 
Let $f(z)=z$ for all $z\in C_l$ with $l\in a\cup (\omega\setminus b)$ and all $z\in X\setminus \bigcup_{l\in\omega} U_l$, and let $f(z)=x_l$ for all $z\in U_l$ with $l\in b\setminus a$. Then $f\colon X\rightarrow X$ is Lipschitz and $\psi(a)=f^{-1}(\psi(b))$. 
\end{proof} 

\begin{claim} \label{claim:nonBorelreduction}
If $a\not\subseteq b$, then $\psi(a)\not\leq_{\mathsf{Bor}(X)}\psi(b)$. 
\end{claim} 

\begin{proof} 
Suppose towards a contradiction that \( f \colon X \to X \) is a Borel map reducing \( \psi(a) \) to \(\psi(b) \). Since \( b \neq \omega \) by \( a \not\subseteq b \), there is \( y_0 \in (\bigcup_{l \in \omega} C_l ) \setminus \psi(b) \). Let \( \bar{f} \colon X \to X \) by letting \( \bar{f}(x) = f(x) \) if \( f(x) \in \bigcup_{l \in \omega} C_l \) and \( \bar{f}(x) = y_0 \) otherwise: then \( \bar{f} \colon X \to \bigcup_{l \in \omega} C_l \) is Borel and still reduces \( \psi(a) \) to \( \psi(b) \). Since the map $h\colon  {}^{\omega}\omega\rightarrow \bigcup_{l\in\omega} C_l$ is a homeomorphism, the function \( h^{-1} \circ \bar{f} \circ h \colon \pre{\omega}{\omega} \to \pre{\omega}{\omega} \) is a well-defined Borel function reducing \(A_a \) to \(A_b \), contradicting Claim~\ref{claim: no Borel reduction}. 
\end{proof} 
This completes the proof of Theorem \ref{th:illfounded hierarchy in L}. 
\end{proof}

Similar to Corollary \ref{corollary: illfounded Borel hierarchy}, we now obtain: 

\begin{theorem}\label{corollary: illfounded Borel hierarchy in L}
Assume $\mathsf{V=L}$ and let \( X  \) be an uncountable Polish space. Then there is a map \( \psi \) from \( \pow(\omega) \) into the \( {\bf\Pi}^1_1\) subsets of \( X \) such that for every \( a,b \subseteq \omega \)
\begin{enumerate}
\item
if \( a \subseteq b \), then \( \psi(a) \leq_{\mathsf{D}_2(X)} \psi(b) \);
\item
if \( \psi(a) \leq_{\Bor(X)} \psi(b) \), then \( a \subseteq b \).
\end{enumerate} 
In particular, \( (\pow(\omega), \subseteq) \) embeds into the \( \F \)-hierarchy on the ${\bf\Pi}^1_1$ subsets of $X$ for every reducibility \( \mathsf{D}_2(X) \subseteq \F \subseteq \Bor(X) \), hence \( \Deg_{\boldsymbol{\Pi}^1_1}(\F) \) is very bad.
\end{theorem}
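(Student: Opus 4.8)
The plan is to reduce the general Polish case to the ultrametric case already handled in Theorem~\ref{th:illfounded hierarchy in L}, exactly as Theorem~\ref{corollary: illfounded Borel hierarchy} was reduced to Theorem~\ref{th:illfounded hierarchy}. First I would recall that every uncountable Polish space $X$ contains a Cantor set, hence a subspace homeomorphic to $\pre{\omega}{\omega}$; more relevantly, $X$ can be partitioned into countably many uncountable Borel pieces, and even into countably many uncountable $\boldsymbol{\Delta}^0_2$ pieces. So the first step is to fix a partition $(X_n)_{n \in \omega}$ of $X$ into uncountable $\boldsymbol{\Delta}^0_2$ sets, and inside each $X_n$ fix a $\boldsymbol{\Pi}^0_2$ proper subset $C_n$ homeomorphic to $\pre{\omega}{\omega}$ together with a homeomorphism $h_n \colon \boldsymbol{\mathrm{N}}_{\langle n \rangle} \to C_n$ and a point $x_n \in X_n \setminus C_n$. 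As in the proof of Theorem~\ref{th:illfounded hierarchy in L}, the union $\bigcup_{n \in \omega} C_n$ is a Borel subset of $X$ and $h \colon \pre{\omega}{\omega} \to \bigcup_n C_n$, $x \mapsto h_{x(0)}(x)$, is a homeomorphism.

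Next I would take the map $\psi_0 \colon \pow(\omega) \to \pow(\pre{\omega}{\omega})$ produced by Theorem~\ref{th:illfounded hierarchy in L} for the Baire space (this is the part of that theorem proved under $\mathsf{V=L}$, yielding $\boldsymbol{\Pi}^1_1$ sets $A_a$), and define $\psi(a) = h(A_a)$. Since $h$ is a homeomorphism onto a Borel subset of $X$, and $A_a$ is $\boldsymbol{\Pi}^1_1$ in $\pre{\omega}{\omega}$, the image $\psi(a)$ is $\boldsymbol{\Pi}^1_1$ in $X$: a continuous injective image of a $\boldsymbol{\Pi}^1_1$ set with Borel range is $\boldsymbol{\Pi}^1_1$ (one checks $\psi(a) = h(\pre{\omega}{\omega}) \cap h(A_a)$, with $h(A_a)$ analytic and $h(\neg A_a)$ also analytic inside the Borel set $\bigcup_n C_n$, so $\psi(a)$ is bi-analytic relative to a Borel set, hence $\boldsymbol{\Pi}^1_1$ — alternatively invoke the closure of $\boldsymbol{\Pi}^1_1$ under Borel preimages and the Borel isomorphism between $\bigcup_n C_n$ and $\pre{\omega}{\omega}$).

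For clause (1), if $a \subseteq b$ I would define $f \colon X \to X$ by $f(z) = z$ for $z \in C_l$ with $l \in a \cup (\omega \setminus b)$ and for $z \in X \setminus \bigcup_l X_l$, and $f(z) = x_l$ for $z \in X_l$ with $l \in b \setminus a$. This $f$ is constant on each $X_l$ with $l \in b \setminus a$ and the identity elsewhere; since the $X_l$ are $\boldsymbol{\Delta}^0_2$, $f$ is a $\mathsf{D}_2(X)$-function, and it reduces $\psi(a)$ to $\psi(b)$ exactly as in Claim~\ref{claim: Lipschitz reduction for ultrametric space} (here we only get a $\boldsymbol{\Delta}^0_2$ reduction, not a Lipschitz one, because the $X_l$ need not be balls). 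For clause (2), suppose $f \colon X \to X$ is a Borel reduction of $\psi(a)$ to $\psi(b)$ with $a \not\subseteq b$; pick $y_0 \in (\bigcup_l C_l) \setminus \psi(b)$ (possible since $b \neq \omega$), replace $f$ by $\bar f$ agreeing with $f$ when $f(x) \in \bigcup_l C_l$ and equal to $y_0$ otherwise, so $\bar f$ is Borel, maps into $\bigcup_l C_l$, and still reduces $\psi(a)$ to $\psi(b)$; then $h^{-1} \circ \bar f \circ h \colon \pre{\omega}{\omega} \to \pre{\omega}{\omega}$ is a Borel function reducing $A_a$ to $A_b$, contradicting the corresponding claim (Claim~\ref{claim: no Borel reduction}) in the proof of Theorem~\ref{th:illfounded hierarchy in L}. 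The ``in particular'' clause is then immediate: $a \mapsto \psi(a)$ embeds $(\pow(\omega), \subseteq)$ into $\Deg_{\boldsymbol{\Pi}^1_1}(\F)$ for any reducibility $\mathsf{D}_2(X) \subseteq \F \subseteq \Bor(X)$, and since $(\pow(\omega), \subseteq)$ contains infinite antichains and infinite descending chains, $\Deg_{\boldsymbol{\Pi}^1_1}(\F)$ is very bad.

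The only real point requiring care — the ``main obstacle'', though it is mild — is verifying that $\psi(a) = h(A_a)$ is genuinely $\boldsymbol{\Pi}^1_1$ in $X$ rather than merely $\boldsymbol{\Sigma}^1_1$: this is where one uses that $h$ is a Borel \emph{isomorphism} between $\pre{\omega}{\omega}$ and the Borel set $\bigcup_l C_l$, so that $\boldsymbol{\Pi}^1_1$-ness transfers across $h$, and that a $\boldsymbol{\Pi}^1_1$ subset of a Borel subset of $X$ is $\boldsymbol{\Pi}^1_1$ in $X$. Everything else is a routine transcription of the arguments already given for Theorems~\ref{corollary: illfounded Borel hierarchy} and~\ref{th:illfounded hierarchy in L}, which is why the statement is phrased as ``similar to Corollary~\ref{corollary: illfounded Borel hierarchy}''.
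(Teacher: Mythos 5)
Your proposal is correct and follows essentially the same route as the paper: replace the disjoint open balls used in the ultrametric case at the end of the proof of Theorem~\ref{th:illfounded hierarchy in L} with a partition of $X$ into uncountable $\boldsymbol{\Delta}^0_2$ sets (exactly as Theorem~\ref{corollary: illfounded Borel hierarchy} modifies Theorem~\ref{th:illfounded hierarchy}), set $\psi(a) = h(A_a)$, and observe that the reduction in clause (1) is now a $\mathsf{D}_2(X)$-function rather than Lipschitz while clause (2) is inherited verbatim. The paper's own proof is a one-line pointer to this modification, so you have actually spelled out more of the details --- notably the verification that each $\psi(a)$ is $\boldsymbol{\Pi}^1_1$ in $X$ --- than the paper does.
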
 

\begin{proof}
Define \(\psi\) as at the end of the proof of Theorem~\ref{th:illfounded hierarchy in L} (where the case of an arbitrary ultrametric Polish space is considered). If \( a \subseteq b \subseteq \omega \), we define \( f \colon X \to X \) as in the proof of Claim~\ref{claim: Lipschitz reduction for ultrametric space}: then \( f \) is clearly $\mathsf{D}_2(X)$ and $\psi(a)=f^{-1}(\psi(b))$. The converse direction can be easily proved as in Claim~\ref{claim:nonBorelreduction}, hence we are done.
\end{proof} 

The existence of maps \( \widetilde{\psi} \colon \pow(\omega) \to {\bf\Sigma}^1_1(X) \) with the properties stated in Theorems~\ref{th:illfounded hierarchy in L} and~\ref{corollary: illfounded Borel hierarchy in L} follows immediately by taking complements, i.e.\ by setting \( \widetilde{\psi}(a) = X \setminus \psi(a) \) for every \( a \subseteq \omega \) (where \( \psi \colon \pow(\omega) \to \boldsymbol{\Pi}^1_1(X) \) is as in Theorem~\ref{th:illfounded hierarchy in L} or Theorem~\ref{corollary: illfounded Borel hierarchy in L}). 

\begin{remark}
By Borel determinacy, the requirement that \( \psi \) ranges into \( \boldsymbol{\Pi}^1_1 \) (alternatively: \( \boldsymbol{\Sigma}^1_1 \)) subsets of \( X \) in Theorems~\ref{th:illfounded hierarchy in L} and~\ref{corollary: illfounded Borel hierarchy in L} cannot be further improved, and therefore such results are optimal.
\end{remark}

It is well-known that ${\bf \Pi}^1_1$-determinacy implies that e.g.\ the $\mathsf{L}(\bar{d})$-hierarchy on  ${\bf \Pi}^1_1$ subsets of \( \pre{\omega}{\omega} \) is very good. In fact, Harrington~\cite{Harrington:1978} (essentially) showed that the following are equivalent:
\begin{itemize}
\item
every \( \boldsymbol{\Pi}^1_1 \) subset of \( \pre{\omega}{\omega} \) is determined;
\item
for all \( x \in \pre{\omega}{\omega} \), \( x^\# \) exists;
\item
\( \SLO^{\L(\bar{d})} \) holds for \( \boldsymbol{\Pi}^1_1 \) subsets of \( \pre{\omega}{\omega} \).
\end{itemize}
Since sharps do not exist  if \( \mathsf{V = L} \), Theorem~\ref{th:illfounded hierarchy in L} can then be regarded as a strengthening of (one direction)
 of the above mentioned Harrington's result: under the further assumption \( \mathsf{V=L} \), not only \( \SLO^{\L(\bar{d})} \) for
 \( \boldsymbol{\Pi}^1_1 \) subsets of \( \pre{\omega}{\omega} \) does not hold, but in fact we can embed a reasonably complicated partial order in
  \( \Deg_{\boldsymbol{\Pi}^1_1}(\L(\bar{d})) \).  Notice also that since
\( \Deg_{\boldsymbol{\Delta}^1_1}(\L(\bar{d})) \) needs to be very good by Borel determinacy, Theorem~\ref{th:illfounded hierarchy in L} actually shows that if \( \mathsf{V=L} \), then 
\( (\pow(\omega), \subseteq) \) embeds into the \( \L(\bar{d}) \)-hierarchy on \emph{proper} \( \boldsymbol{\Pi}^1_1 \)  subsets of
 \( \pre{\omega}{\omega} \), and Theorem~\ref{corollary: illfounded Borel hierarchy in L} shows that the same partial order embeds also e.g.\ in the \( \Bor(X) \)-hierarchy on \emph{proper} \( \boldsymbol{\Pi}^1_1 \) (alternatively:
 \emph{proper} \( \boldsymbol{\Sigma}^1_1 \)) subsets of any uncountable Polish space \( X\). This conclusion considerably strengthen the well-known
 fact that if \( \boldsymbol{\Pi}^1_1 \)-determinacy fails then there are proper \( \boldsymbol{\Pi}^1_1 \) subsets which are not (Borel-)complete for that
 class.

The next questions essentially asks if it is possible to further strengthen Theorems~\ref{th:illfounded hierarchy} and~\ref{th:illfounded hierarchy in L} by either trying to embed  a more complicated quasi-order into the relevant hierarchies, or by weakening the assumption required for those results to \( \boldsymbol{\Pi}^1_1 \)-determinacy.

\begin{question} 
Assume $\mathsf{AC}$.  
\begin{enumerate}
\item Is there a map \( \psi \colon \pow(\omega) \to \pow (\pre{\omega}{\omega}) \) such that $a\subseteq^* b \iff \psi(a)\leq_{\mathsf{Bor}(\pre{\omega}{\omega})}\psi(b)$ for all \( a,b \subseteq \omega \)? 
\item Does the non-existence of $0^{\#}$ 
already imply that the $\mathsf{Bor}(\pre{\omega}{\omega})$-hierarchy on ${\bf \Pi}^1_1$ subsets of \( \pre{\omega}{\omega} \) is ill-founded? 
\end{enumerate} 
\end{question}



\end{document}